\documentclass[12pt]{article}

\usepackage[utf8]{inputenc}
\usepackage[a4paper,nomarginpar]{geometry}
\usepackage[english]{babel}
\usepackage{enumitem}
\usepackage{amsmath,amsfonts,amssymb,amsthm,amscd}
\usepackage{tikz,xcolor}
\usepackage{mathrsfs}
\allowdisplaybreaks

\theoremstyle{plain}
\newtheorem{theorem}{Theorem}
\newtheorem{lemma}[theorem]{Lemma}
\newtheorem{corollary}[theorem]{Corollary}

\theoremstyle{definition}
\newtheorem{definition}[theorem]{Definition}
\newtheorem{remark}[theorem]{Remark}
\newtheorem{example}[theorem]{Example}

\newcommand{\N}{\mathbb{N}} 
\newcommand{\Z}{\mathbb{Z}} 
\newcommand{\R}{\mathbb{R}} 
\newcommand{\C}{\mathbb{C}} 
\newcommand{\D}{\mathbb{D}} 
\newcommand{\T}{\mathbb{T}} 
\newcommand{\K}{\mathbb{K}} 

\newcommand{\supp}{\operatorname{supp}}

\newcommand{\udens}{\operatorname{\overline{dens}}}

\newcommand{\eps}{\varepsilon}

\title{Shadowing for weighted composition operators on spaces of continuous functions}

\author{Jo\~ao V. A. Pinto}

\date{}

\begin{document}
	
	\maketitle
	
	\begin{abstract}
In this work, we characterize the shadowing property for weighted shifts on a general class of Banach sequence spaces, extending a theorem proved by Bernardes et al. in ETDS (2020). We then apply this result to characterize this property for weighted composition operators acting on a class of Banach spaces of continuous functions. As an application, we present a characterization of shadowing for bilateral weighted translations on the spaces $C_b(\R)$ and $C_0(\R)$. In a more general setting, we provide sufficient conditions for weighted composition operators to have the shadowing property. These conditions allow us to obtain complete characterizations of this property for multiplication operators on the Hardy spaces $H^p(\mathbb{D})$, $p\in[1,\infty]$, as well as on spaces of the form $H^\infty(\Omega)$ or $C_b(\Theta)$.
\end{abstract}

	\bigskip\noindent
	{\bf Keywords:} Banach sequence spaces, generalized hyperbolicity, shadowing property, spaces of continuous functions, weighted composition operators, weighted shifts.
	
	\bigskip\noindent
	{\bf 2020 Mathematics Subject Classification:} Primary 47A16, 47B33; Secondary 46E15, 46E30.
	\section{Introduction}
    This work lies in the area of linear dynamics, a branch of mathematics at the intersection of dynamical systems and operator theory. The field studies dynamical properties of continuous linear operators acting on topological vector spaces. Historically, it was primarily concerned with hypercyclicity (the existence of a dense orbit) and related dynamical properties, such as mixing and Devaney chaos. Much of the research conducted during the twentieth century and the first decade of the twenty-first century focused on these topics. For detailed treatments of these concepts, we refer the reader to \cite{BaMa,GE-P}.

    Over the past decade, other notions of chaos, such as distributional chaos and Li--Yorke chaos, have been extensively studied in linear dynamics. These concepts are based on the dynamical behavior of pairs of points. For seminal works on these properties in the linear setting, we refer the reader to \cite{Nilson0, BBMP2, BBP, BBPW}. More recently, the study of concepts from hyperbolic dynamics, such as hyperbolicity, shadowing, expansivity, and stability, has gained considerable attention in this setting. Although some classical results concerning these properties in the linear context had already been obtained in works such as \cite{EisenbergHedlund1970, Hedlund1971, Mazur2000}, a systematic study of these notions for operators on Banach spaces was first carried out in \cite{BCDMP2018}.

    Subsequently, several works in this direction appeared. In \cite{BerMe}, the authors solved a long-standing open problem, dating back more than fifty years, by exhibiting structurally stable operators on Banach spaces that are not hyperbolic. In the seminal paper \cite{CirGolPuj}, the concept of generalized hyperbolicity was introduced, which will play an important role throughout this work. In \cite{BernardesMessaoudi2020}, it was proven that any generalized hyperbolic operator on any Banach space is structurally stable. In \cite{BerPer}, the authors proved that the shadowing and finite shadowing properties coincide for invertible operators on Banach spaces, while showing that this is no longer true in the more general context of Fréchet spaces. Finally, in \cite{BCDFP}, the notions of generalized hyperbolicity, shadowing, stability, and expansivity were generalized to the setting of operators on locally convex spaces, and several corresponding results were established. 

    Another important result in \cite{BerMe} is the characterization of the shadowing property for weighted shifts on the sequence spaces $c_0(\Z)$ and $\ell^p(\Z)$, $1\leq p<\infty$. Characterizing dynamical properties in the context of weighted shifts on sequence spaces is one of the main goals in linear dynamics, since the structural simplicity of these operators often allows for explicit characterizations, thus providing concrete examples and counterexamples. Moreover, a complete understanding of a given property in the setting of weighted shifts may lead to characterizations of the same property for more general classes of operators. As examples, we mention the recent works \cite{Nilson2, Pinto}, which characterize chaotic properties for weighted shifts on Fréchet sequence spaces. An important open problem is to generalize the result mentioned at the beginning of this paragraph to the setting of Fréchet sequence spaces.

    In the first part of this work, namely Section 3, we take a first step toward solving the problem mentioned at the end of the previous paragraph. More precisely, under suitable assumptions, we establish in Theorem \ref{Teo1} a characterization of the shadowing property for weighted shifts on general Banach sequence spaces. The assumptions considered are conditions (C1) and (C2), introduced in Definition \ref{DCS}. Condition (C1) may be regarded as a natural structural assumption on the underlying space, whereas (C2) enables us to employ the implication
\[
\text{generalized hyperbolicity}\Longrightarrow \text{shadowing},\tag{$\bigtriangleup$}
\]
in the proof of the implication ($\Leftarrow$) of Theorem \ref{Teo1}. We observe that the implication ($\bigtriangleup$) was proved in the context of Banach spaces in \cite[Theorem A]{BCDMP2018}. Under additional assumptions (see \cite[Theorem 6]{BCDFP}), this implication remains valid in the setting of locally convex spaces. In essence, the strategy behind the proof of Theorem \ref{Teo1} is to generalize the arguments of \cite[Theorem 18]{BerMe}. We hope that this generalization will serve as a starting point for further extensions to the setting of Fréchet sequence spaces.

Weighted composition operators constitute one of the most important classes of operators in linear dynamics, as they provide a rich source of examples throughout the theory. In particular, the weighted shifts and the weighted translations belong to this class. Let $X$ be a space of functions. A weighted composition operator is an operator of the form
$$
C_{w,f}(\varphi) = (\varphi \circ f) w \quad \text{for every } \varphi \in X,
$$
where $f$ (called the {\it composition function}) and $w$ (called the {\it weight function}) are suitably chosen functions. There is a vast literature on such operators in several settings. In the analytic and differentiable context, we refer, for instance, to \cite{AJK, Blois, Prz}. In the measure-theoretic setting, we mention the works \cite{BerBonPin26, Nilson12, Nilson2, DAnDarMai21, DarPi, GG}. Finally, in the context of spaces of continuous functions, we refer to \cite{BerBonPin26, Nilson12, Nilson2}. We observe that, in \cite{BerBonPin26, Nilson12, Nilson2}, all characterizations of the dynamical properties under consideration are established in two settings: the spaces $L^p(\mu)$ ($1\le p < \infty$) and the spaces $C_0(\Omega)$. Moreover, in \cite{DAnDarMai21}, the authors characterize the shadowing property for composition operators acting on $L^p(\mu)$ ($1\le p < \infty$) under two assumptions, namely dissipativity and bounded distortion.

A natural question therefore arises: is it possible to obtain a version of the characterization established in \cite{DAnDarMai21} for the spaces $C_0(\Omega)$, or more generally for spaces of continuous functions? In the second and main part of this work, namely Section 4, we provide such a version in Theorem \ref{MainTheorem} for weighted composition operators acting on spaces of continuous functions satisfying certain assumptions. Among these assumptions, we highlight condition (AS2), which plays a role analogous to that of dissipativity in \cite{DAnDarMai21}. As corollaries, we obtain complete characterizations of shadowing for weighted translations on the spaces $C_0(\mathbb{R})$ and $C_b(\mathbb{R})$. Before proving Theorem \ref{MainTheorem}, we establish Theorem \ref{theo1}, which, in a more general setting than that of Theorem \ref{MainTheorem}, provides sufficient conditions for weighted composition operators to have the shadowing property. This result allows us to completely characterize this property for multiplication operators acting on the Hardy spaces $H^p(\mathbb{D})$, $p\in[1,\infty]$, on $H^\infty(\Omega)$, where $\Omega\subset\mathbb{C}$ is a relatively compact open set, and on spaces of the form $C_b(\Theta)$, where $\Theta$ is a relatively compact subspace of a Hausdorff space.

We recall that one of the main problems in linear dynamics is to determine whether the shadowing property and generalized hyperbolicity are equivalent in the setting of invertible operators on Banach spaces (\cite[Problem F]{BCDFP}). More precisely, the problem is to determine whether the converse of the implication $(\bigtriangleup)$ holds in the context of Banach spaces. In the characterizations obtained in \cite{BerMe} and \cite{DAnDarMai21}, these two concepts were shown to be equivalent. However, our results in Section 4 indicate that this equivalence may not hold in the general setting of Banach spaces. In contrast to \cite{BerMe, DAnDarMai21}, in Theorem \ref{theo1}, a condition weaker than generalized hyperbolicity was used to obtain the shadowing property. This may provide an approach to answering Problem F from \cite{BCDFP} in the negative.
	\section{Preliminaries}
    Throughout this text, $\mathbb{K}$ denotes either the field $\mathbb{R}$ of real numbers or the field $\mathbb{C}$ of complex numbers, $\mathbb{Z}$ denotes the ring of integers, $\mathbb{N}$ denotes the set of all positive integers, and $\mathbb{N}_0 = \mathbb{N} \cup \{0\}$. Given sets $A$ and $B$, a subset $C\subset A$, and a function $f:A\to B$, we denote by $f|_C$ the restriction of $f$ to $C$. Furthermore, for $b\in B$, the notation $f|_C\equiv b$ means that $f(c)=b$ for all $c\in C$. In this work, the symbol $\bigsqcup$ will be used to denote a disjoint union. By an ({\it invertible}) {\em operator} on a Banach space $Y$, we mean a continuous (invertible) linear map $T : Y \to Y$. Recall that the {\em operator norm} of such a map is the number $\|T\| = \sup\{\|Ty\| : \|y\| \leq 1\}$. We will use throughout this work the Spectral Radius Theorem, which ensures that the limit
\[
\lim_{n\to\infty}\|T^n\|^{\frac{1}{n}}
\]
exists and is finite for every operator \(T\) on a Banach space. Below, we list some results concerning limits of sequences that will be used throughout the paper:
\begin{itemize}
    \item (Submultiplicative Fekete's Lemma) If \((a_n)_{n\in\mathbb{N}}\) is a sequence of positive real numbers satisfying $a_{m+n}\leq a_m a_n$ for all \(m,n\in\mathbb{N}\), then the limit $\lim_{n\to\infty} a_n^{\frac{1}{n}}$ exists and is equal to \(\inf_{n\in\mathbb{N}} a_n^{\frac{1}{n}}\).
     \item Let $(a_n)_{n\in\mathbb N}$ be a sequence of positive real numbers. If $\lim_{n\to\infty}a_n^{1/n}>1$, then there exist constants $C>0$ and $t>1$ such that $a_n\geq Ct^n$ for every $n\in\mathbb N$.
     \item Let $(a_n)_{n\in\mathbb N}$ be a sequence of positive real numbers. If $\lim_{n\to\infty}a_n^{1/n}<1$, then there exist constants $C>0$ and $t\in(0,1)$ such that $a_n\leq Ct^n$ for every $n\in\mathbb N$.
\end{itemize}
Next, we present the concepts that will be used throughout this work.
    \subsection{Shadowing} 
The shadowing property provides a way to relate approximate trajectories to genuine orbits of a dynamical system. Since its origins in the study of hyperbolic dynamics \cite{Ano, Bow, Bow2}, it has played an important role in the qualitative analysis of dynamical systems. In the following, we present its definition.
\begin{definition}
    Let $X$ be a metric space with metric $d$ and let $f:X\to X$ be a homeomorphism. Given $\delta>0$, a $\delta$-\emph{pseudotrajectory} of $f$ is a sequence $(x_j)_{i<j<k}$ in $X$, where $-\infty\leq i<k\leq \infty$ and $k-i\geq 3$, such that $d(f(x_j),x_{j+1})\leq \delta$ for all $i<j<k-1$. We say that $f$ has the \emph{shadowing property} if for every $\varepsilon>0$, there exists $\delta>0$ such that every $\delta$-pseudotrajectory $(x_j)_{j\in\mathbb{Z}}$ of $f$ is $\varepsilon$-\emph{shadowed} by a real trajectory of $f$, that is, there exists $x\in X$ such that
\[
d(x_j,f^j(x))<\varepsilon,\quad \text{for all }j\in\mathbb{Z}.
\]
\end{definition}
    The notions of {\it finite shadowing} and {\it positive shadowing} are obtained from the previous definition by replacing $\mathbb{Z}$ with sets of the form $\{0,\ldots,k\}$, where $k\in\mathbb{N}$, and with $\mathbb{N}_0$, respectively. In both cases, invertibility of $f$ is not required. By \cite[Remark 3]{BerPer}, in the context of invertible operators on Fréchet spaces, we have 
    $$\text{Shadowing}\overset{(\nLeftarrow) }{\Longrightarrow} \text{Positive Shadowing}\overset{(\nLeftarrow )}{\Longrightarrow} \text{Finite Shadowing}.$$
    On the other hand, by \cite[Theorem 1]{BerPer}, in the context of invertible operators on Banach spaces, we have 
    $$\text{Shadowing}\Longleftrightarrow  \text{Positive Shadowing}\Longleftrightarrow  \text{Finite Shadowing}.$$
    
     Recall that an operator $T$ on a Banach space $X$ is said to be \emph{hyperbolic} if $\sigma(T)\cap\mathbb{T}=\varnothing$, where $\sigma(T)$ denotes the spectrum of $T$ and $\mathbb{T}:=\{\lambda\in\mathbb{C}:|\lambda|=1\}$ is the unit circle. Next, we present a definition, first formally stated in \cite{CirGolPuj}, that generalizes the classical notion of hyperbolicity and is closely related to the shadowing property in the context of linear dynamics. We note that the definition below was generalized to the setting of Fréchet spaces in \cite[Definition 1]{BCDFP}.
      \begin{definition}\cite[Definition 1]{CirGolPuj}\label{defgen}
Let $\left( X, \left\| \cdot \right\|  \right)$ be a Banach space. An invertible operator $T:X \to X$ is said to be \emph{generalized hyperbolic} if there exists a topological direct sum decomposition
\[
X=M\oplus N
\]
such that the following conditions hold:
\begin{enumerate}
\item[(GH1)] $T(M)\subset M$ and $T^{-1}(N)\subset N$;
\item[(GH2)] There exist $c>0$ and $t\in(0,1)$ such that
\[
\|T^n y\|\le ct^n\|y\|
\quad\text{and}\quad
\|T^{-n}z\|\le ct^n\|z\| \quad \text{for every } y\in M, z\in N \text{ and }n\in\mathbb N_0.
\]
\end{enumerate}
\end{definition}
In the setting of Banach spaces, we have the following result, which establishes that generalized hyperbolicity implies the shadowing property. In the context of locally convex spaces, generalized hyperbolicity requires additional assumptions in order to guarantee the shadowing property (see \cite[Theorem 6]{BCDFP}). 
\begin{lemma}{\rm \cite[Theorem A]{BCDMP2018}}\label{LL1} Let $X$ be a Banach space, and let $T: X \to X$ be an invertible operator. If $T$ is generalized hyperbolic, then $T$ has the shadowing property.
\end{lemma}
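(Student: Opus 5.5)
The plan is to write $X=M\oplus N$ with projections $P$ onto $M$ and $Q=I-P$ onto $N$, both bounded since the sum is topological. Given a $\delta$-pseudotrajectory $(x_j)_{j\in\mathbb{Z}}$, set $d_j:=x_{j+1}-Tx_j$, so that $\|d_j\|\le\delta$ for all $j$. The aim is to solve the inhomogeneous equation $Tz_j-z_{j+1}=d_j$ with a bounded correction sequence $(z_j)$. Once this is done, $y_j:=x_j+z_j$ satisfies $y_{j+1}=Ty_j$, so $y_j=T^jy_0$. Choosing $x:=y_0$ then gives $\|x_j-T^jx\|=\|z_j\|$, and this must be made $<\varepsilon$.

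To build the correction, I use (GH1) in the form $T(M)\subset M$ and $T^{-1}(N)\subset N$. This means the forward iterates of $T$ act on $M$ and the backward iterates act on $N$. Split $d_j=Pd_j+Qd_j$ and define
\[
z_j:=-\sum_{k=1}^{\infty}T^{k-1}Pd_{j-k}+\sum_{k=0}^{\infty}T^{-(k+1)}Qd_{j+k}.
\]
By (GH2), the terms are bounded by $ct^{k-1}\|P\|\delta$ and $ct^{k+1}\|Q\|\delta$ respectively. Both series therefore converge absolutely, with
\[
\|z_j\|\le c\delta\bigl(\|P\|+\|Q\|\bigr)\sum_{k\ge0}t^k=\frac{c(\|P\|+\|Q\|)}{1-t}\,\delta=:K\delta .
\]

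Next I verify the equation by a telescoping computation. Using continuity of $T$ to pass it inside the sums, and reindexing,
\[
Tz_j=-\sum_{k\ge1}T^kPd_{j-k}+\sum_{k\ge0}T^{-k}Qd_{j+k}.
\]
The first sum equals $z_{j+1}$'s $M$-part plus the $k=1$ term of $z_{j+1}$ shifted, namely $-Pd_j$. Explicitly, $z_{j+1}$'s first sum is $-\sum_{k\ge1}T^{k-1}Pd_{j+1-k}=-Pd_j-\sum_{k\ge1}T^kPd_{j-k}$. Similarly, the second sum of $Tz_j$ equals $Qd_j+\sum_{k\ge0}T^{-(k+1)}Qd_{j+1+k}$, which is $Qd_j$ plus the $N$-part of $z_{j+1}$. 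Hence $Tz_j=z_{j+1}+Pd_j+Qd_j=z_{j+1}+d_j$, as required.

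Finally, given $\varepsilon>0$, take $\delta<\varepsilon/K$, which yields shadowing. The only delicate points are bookkeeping ones. One must check that the expression $T^{-(k+1)}Qd$ stays in $N$, which holds because $T^{-1}(N)\subset N$; invertibility is not strictly needed for the $M$-part but is used for the $N$-part. One must also check the index shift in the telescoping, which I expect to be the main place for errors.
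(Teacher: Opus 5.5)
Your proof is correct. The paper does not reprove this lemma; it quotes \cite[Theorem A]{BCDMP2018}. Your construction is the standard one, and it is the same argument the paper itself runs in the proof of Theorem~\ref{theo1} under (C): your $z_j$ equals $-\phi_j$ there, and your projections $P,Q$ play the role of multiplication by $\rho_-,\rho_+$.

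As that proof shows, only $P+Q=I$ and the estimates in (GH2) are actually used. Neither (GH1) nor the fact that $T^{-(k+1)}Qd_{j+k}$ stays in $N$ enters the bounds or the telescoping identity, so that part of your closing remark can be dropped.
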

Let $(X,S)$ and $(Y,T)$ be two linear dynamical systems, that is, $S: X \to X$ and $T: Y \to Y$ are operators on the Banach spaces $X$ and $Y$, respectively. We say that $T$ is a \textit{linear factor} of $S$ if there exists a {\it linear factor map} $\Pi$, 
i.e., a surjective operator $\Pi : X \to Y$ such that $\Pi \circ S = T \circ \Pi.$ In \cite{DAnDarMai21}, the following result was given, which establishes that the shadowing property is preserved under linear factors.
\begin{lemma}{\rm \cite[Lemma 4.2.2]{DAnDarMai21}}\label{LL2}
    Let $(X,S)$ and $(Y,T)$ be linear dynamical systems where $T$ is a linear factor of $S$. If $S$ has the shadowing property, then so does $T$.
\end{lemma}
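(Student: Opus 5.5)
The plan is to pull back pseudotrajectories of $T$ to pseudotrajectories of $S$ through the factor map $\Pi$. Two ingredients are needed: a quantitative lifting obtained from the Open Mapping Theorem, and the fact that $\Pi$ maps shadowing orbits to shadowing orbits. Since $\Pi:X\to Y$ is a surjective operator between Banach spaces, the Open Mapping Theorem gives a constant $K>0$ with the following property: for every $y\in Y$ there is $x\in X$ with $\Pi x=y$ and $\|x\|\le K\|y\|$. Only this bounded-lift property, together with continuity of $\Pi$, will be used.

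Fix $\varepsilon>0$ and set $\varepsilon'=\varepsilon/(\|\Pi\|+1)$. Let $\delta'>0$ be given for $S$ and $\varepsilon'$ by the shadowing property of $S$, and put $\delta=\delta'/(2K)$. Let $(y_j)_{j\in\mathbb Z}$ be a $\delta$-pseudotrajectory of $T$, and write $e_j:=y_{j+1}-Ty_j$, so that $\|e_j\|\le\delta$ for all $j$.

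Lift the sequence as follows. Choose $x_0\in X$ with $\Pi x_0=y_0$; no norm bound is needed for this choice. Define $x_j$ recursively in both directions so that $\Pi x_j=y_j$ and $\|x_{j+1}-Sx_j\|\le\delta'$.
\begin{itemize}
\item Forward step: choose $u_j$ with $\Pi u_j=e_j$ and $\|u_j\|\le K\delta$, and set $x_{j+1}=Sx_j+u_j$. Then $\Pi x_{j+1}=T\Pi x_j+e_j=y_{j+1}$.
\item Backward step: here we must find $x_{j-1}$ with $\Pi x_{j-1}=y_{j-1}$ and $Sx_{j-1}$ close to $x_j$. This is the main obstacle, since $S$ need not be surjective. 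I would first use that the definition of shadowing is stated for homeomorphisms, so $S$ and $T$ are invertible, and $\Pi S^{-1}=T^{-1}\Pi$. Pick $v$ with $\Pi v=y_{j-1}-T^{-1}y_j=-T^{-1}e_{j-1}$ and $\|v\|\le K\|T^{-1}\|\delta$, and set $x_{j-1}=S^{-1}x_j+v$. Then $\Pi x_{j-1}=y_{j-1}$ and $\|Sx_{j-1}-x_j\|=\|Sv\|\le\|S\|K\|T^{-1}\|\delta$.
\end{itemize}
To make both steps work, I adjust $\delta$ to $\delta=\delta'/(K(1+\|S\|\|T^{-1}\|))$. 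With this choice $(x_j)_{j\in\mathbb Z}$ is a $\delta'$-pseudotrajectory of $S$.

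By the shadowing property of $S$ there is $x\in X$ with $\|x_j-S^jx\|<\varepsilon'$ for all $j\in\mathbb Z$. Put $y=\Pi x$. Then $T^jy=\Pi S^jx$ for all $j\in\mathbb Z$; for negative $j$ this uses $\Pi S^{-1}=T^{-1}\Pi$. Hence
\[
\|y_j-T^jy\|=\|\Pi(x_j-S^jx)\|\le\|\Pi\|\varepsilon'<\varepsilon
\]
for every $j\in\mathbb Z$, so $T$ has the shadowing property.

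As an alternative route, one could invoke \cite[Theorem 1]{BerPer} to reduce to positive shadowing, where only the forward lifting is needed and the backward step disappears. That reduction still uses invertibility, but it isolates the single step where care is required.
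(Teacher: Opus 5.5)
Your proof is correct. The paper does not prove this lemma itself; it is quoted from \cite{DAnDarMai21}. Your argument is the natural proof: take bounded lifts from the Open Mapping Theorem, build an $S$-pseudotrajectory $(x_j)_{j\in\mathbb{Z}}$ with $\Pi x_j=y_j$, and push the shadowing orbit down via $\Pi S^j=T^j\Pi$ for $j\in\mathbb{Z}$. With your final choice $\delta=\delta'/(K(1+\|S\|\,\|T^{-1}\|))$, all the estimates hold.

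Two small remarks.

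First, the backward step can be handled exactly like the forward one. Lift $e_{j-1}$ to $u_{j-1}$ with $\|u_{j-1}\|\le K\delta$ and set $x_{j-1}:=S^{-1}(x_j-u_{j-1})$. Then $\Pi x_{j-1}=T^{-1}(y_j-e_{j-1})=y_{j-1}$ and $x_j-Sx_{j-1}=u_{j-1}$. So $\delta=\delta'/K$ already suffices, and $\|S\|$, $\|T^{-1}\|$ never enter.

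Second, invertibility of $T$ is really a standing hypothesis, not a consequence of the invertibility of $S$. It is needed for the conclusion to make sense under the paper's definition. $T$ is automatically surjective, but it need not be injective. For instance, let $S$ be twice the forward bilateral shift on $\ell^2(\mathbb{Z})$, which is hyperbolic and hence has shadowing. Let $Z$ be the closed span of the canonical vectors with nonnegative index, which is $S$-invariant, and let $\Pi$ be the quotient map onto $\ell^2(\mathbb{Z})/Z$. Then the induced $T$ kills the class of the canonical vector with index $-1$. In the paper's application this causes no trouble, since $B_{w_x}$ is invertible on $S_x$ by Lemma~\ref{lead1}(a).
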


    \subsection{Admissible Banach sequence spaces}
    To establish one of the main results of this paper, Theorem \ref{MainTheorem}, we first require Theorem \ref{Teo1}. In the following, we present the preliminaries needed for this theorem.
    \begin{definition}
        A Banach space $X$ which is a vector subspace of the product space $\mathbb{K}^{\mathbb{Z}}$ is a {\it Banach sequence space over }$\Z$ if the inclusion map $X \to \mathbb{K}^{\mathbb{Z}}$ is continuous, i.e., convergence in $X$ implies coordinatewise convergence.
    \end{definition}
	Let $X$ be a Banach sequence space over $\Z$. If $w := (w_n)_{n\in\mathbb{Z}}$ is a sequence of nonzero scalars, the Closed Graph Theorem implies that the \emph{bilateral weighted backward shift} defined by
	\[
	B_w((x_n)_{n\in\mathbb{Z}}) := (w_n x_{n+1})_{n\in\mathbb{Z}},  
	\]
	
	is an operator on $X$ provided that it maps $X$ into itself. In the invertible case, we have 
    $$B_w^{-1}((x_n)_{n \in \mathbb{Z}}) = \left( \frac{1}{w_{n-1}} x_{n-1} \right)_{n \in \mathbb{Z}} \quad \text{for all }(x_n)_{n \in \mathbb{Z}}\in X.$$
    Let $X$ be a Banach sequence space over $\Z$. Consider the following condition:
    \begin{itemize}
        \item[(CB)] The canonical vectors $e_n := (\delta_{n,j})_{j\in\mathbb{Z}} \in \mathbb{K}^{\mathbb{Z}}$ belong to $X$ and $X$ can be written as the topological direct sum $X= M\oplus N$, where
        $$M=\left\{ (x_n)_{n\in \mathbb{Z}}\in X : x_n=0,\, \forall\, n>0 \right\}\quad \text{and}\quad N=\left\{ (x_n)_{n\in \mathbb{Z}}\in X : x_n=0, \, \forall\, n\leq 0 \right\}.$$
    \end{itemize}
    \begin{definition}\label{DCS}
        Let $X$ be a Banach sequence space over $\Z$ that satisfies condition (CB). Consider a bilateral weighted backward shift $B_w$, with nonzero weights $w := (w_n)_{n \in \mathbb{Z}}$, which is an invertible operator on $X$. We say that $X$ is an \textit{admissible Banach sequence space} {\it for} $B_w$ if the following conditions hold:
        \begin{itemize}
            \item[\rm (C1)] For each $m\in \Z$ and $x=(x_j)_{j \in \Z}\in X$, we have $\left| x_m \right|\left\| e_m \right\| \le \left\| x \right\|;$
            \item[\rm (C2)] For each $n\in \N$ and $x\in X$, we have 
            \begin{enumerate}
                 \item $\left\| (B_w)^n(x) \right\|\le \underset{k\in\Z, \,  x_k\ne 0}{\sup}|w_{k-n}\cdots w_{k-1}|\frac{\left\| e_{k-n} \right\|}{\left\| e_k \right\|}\left\| x \right\|$ and
                \item $\left\| (B^{-1}_w)^n(x) \right\|\le \underset{k\in\Z, \,  x_k\ne 0}{\sup}\frac{1}{|w_{k}\cdots w_{k+n-1}|}\frac{\left\| e_{k+n} \right\|}{\left\| e_k \right\|}\left\| x \right\|.$
              \end{enumerate}
        \end{itemize}
    \end{definition}
    \begin{lemma}\label{lnor}
         Let $X$ be an admissible Banach sequence space for the bilateral weighted backward shift $B_w$, with nonzero weights $w := (w_n)_{n \in \mathbb{Z}}$. Then the following assertions hold:
         \begin{itemize}
             \item[\rm (a)] $\left\| (B_w)^n \right\|= \underset{k\in\Z}{\sup}|w_{k-n}\cdots w_{k-1}|\frac{\left\| e_{k-n} \right\|}{\left\| e_k \right\|}$, for all $n\in \N$;
             \item[\rm (b)] $\left\| (B^{-1}_w)^n \right\|= \underset{k\in\Z}{\sup}\frac{1}{|w_{k}\cdots w_{k+n-1}|}\frac{\left\| e_{k+n} \right\|}{\left\| e_k \right\|}$, for all $n\in \N$;
              \item[\rm (c)] $\left\| (B_w|_M)^n \right\|= \sup_{k\in \N_0} \left| w_{-k-1}\cdots w_{-k-n} \right|\frac{\left\| e_{-k-n} \right\|}{\left\| e_{-k} \right\|}$ and $\left\| (B_w^{-1}|_N)^n \right\|= \sup_{k\in \N} \frac{1}{\left| w_{k}\cdots w_{k+n-1} \right|}\frac{\left\| e_{k+n} \right\|}{\left\| e_{k} \right\|}$ for all $n\in \N$, where $M$ and $N$ are the subspaces defined in condition (CB).
         \end{itemize}
    \end{lemma}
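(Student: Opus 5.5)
The plan is to prove each identity as a two-sided inequality. In every case condition (C2) supplies the upper bound, and testing the operator on canonical vectors, together with condition (C1), supplies the lower bound.

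\emph{Upper bounds.} For (a), condition (C2)(1) gives, for each $x\in X$,
\[
\|(B_w)^n x\|\le \sup_{k\in\Z,\,x_k\ne0}|w_{k-n}\cdots w_{k-1}|\frac{\|e_{k-n}\|}{\|e_k\|}\,\|x\| .
\]
The supremum over the indices with $x_k\neq 0$ is at most the supremum over all $k\in\Z$. Taking the supremum over $\|x\|\le 1$ gives $\|(B_w)^n\|\le \sup_{k\in\Z}|w_{k-n}\cdots w_{k-1}|\,\|e_{k-n}\|/\|e_k\|$. Part (b) follows in the same way from (C2)(2).

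\emph{Lower bounds.} By (CB), $e_k\in X$, and $e_k\ne0$. Iterating the formula for $B_w$ gives
\[
(B_w)^n e_k = w_{k-n}\cdots w_{k-1}\,e_{k-n}.
\]
One checks this by induction: $B_w e_k=w_{k-1}e_{k-1}$, since the $n$-th coordinate $w_n x_{n+1}$ is nonzero only for $n=k-1$. Hence
\[
\|(B_w)^n\|\ge \frac{\|(B_w)^n e_k\|}{\|e_k\|}=|w_{k-n}\cdots w_{k-1}|\frac{\|e_{k-n}\|}{\|e_k\|}
\]
for every $k\in\Z$, and taking the supremum over $k$ proves (a). Similarly $B_w^{-1}e_k=w_k^{-1}e_{k+1}$, so $(B_w^{-1})^n e_k=(w_k\cdots w_{k+n-1})^{-1}e_{k+n}$, and the same argument proves (b). Condition (C1) is not really needed here; exact formulas on basis vectors suffice.

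\emph{Part (c).} First, $M$ is invariant under $B_w$: if $x_j=0$ for all $j>0$, then $(B_wx)_j=w_jx_{j+1}=0$ for $j>0$. Likewise $N$ is invariant under $B_w^{-1}$, since $(B_w^{-1}x)_j=x_{j-1}/w_{j-1}$ vanishes for $j\le0$ whenever $x\in N$.

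For the upper bound on $M$, apply (C2)(1) to $x\in M$. The only indices with $x_k\ne0$ satisfy $k\le 0$, so write $k=-m$ with $m\in\N_0$. Then
\[
|w_{k-n}\cdots w_{k-1}|\frac{\|e_{k-n}\|}{\|e_k\|}=|w_{-m-1}\cdots w_{-m-n}|\frac{\|e_{-m-n}\|}{\|e_{-m}\|}.
\]
This gives the upper bound for $\|(B_w|_M)^n\|$. For the lower bound, the test vectors $e_{-m}$ with $m\in\N_0$ belong to $M$ and give equality as in (a).

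For $N$, the indices with $x_k\ne 0$ are $k\ge1$. The upper bound comes from (C2)(2), and testing on $e_k$, $k\in\N$, gives the lower bound.

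The only step needing care is the bookkeeping of indices in (c), namely matching $k\le0$ to $-k$ with $k\in\N_0$ and reordering the weight product. I do not expect a genuine obstacle.
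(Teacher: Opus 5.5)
Your proposal is correct and follows essentially the same route as the paper: condition (C2) gives the upper bound, and evaluating $(B_w)^n$ (resp.\ $(B_w^{-1})^n$) on the normalized canonical vectors $e_k/\|e_k\|$ gives the matching lower bound. As you note, (C1) is not needed. The only difference is that you write out the invariance of $M$ and $N$ and the index bookkeeping for (b) and (c), which the paper leaves as ``analogous''.
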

    \begin{proof}
        We prove only item (a), since the proofs of the remaining items are analogous. Fix $n\in \N$ and denote the supremum in item (a) by $s$. For each $k\in \Z$, we have 
        $$|w_{k-n}\cdots w_{k-1}|\frac{\left\| e_{k-n} \right\|}{\left\| e_{k} \right\|}= \left\| (B_w)^{n} \left( \frac{e_{k} }{\left\| e_{k}   \right\|} \right) \right\|\le \left\| (B_w)^{n} \right\|.$$
        Then, $s \le \left\| (B_w)^{n} \right\| $. On the other hand, condition (C2) ensures that $ \left\| (B_w)^{n} \right\|\le s$.
    \end{proof}
    \begin{example}\label{ex1}
        Let $\lambda=(\lambda_n)_{n\in\mathbb{Z}}$ be a sequence of nonzero scalars. The space $\ell^\infty(\lambda,\mathbb{Z})$ is defined by
\[
\ell^\infty(\lambda,\mathbb{Z}):=\left\{x=(x_n)_{n\in\mathbb{Z}}:\sup_{n\in\mathbb{Z}}|x_n\lambda_n|<\infty\right\},
\]
which is a Banach space when endowed with the norm $\|x\|_{\infty}:=\sup_{n\in\mathbb{Z}}|x_n\lambda_n|$, for $x \in \ell^\infty(\lambda,\mathbb{Z})$. Classical examples of closed subspaces of $\ell^\infty(\lambda,\mathbb{Z})$ are the spaces
\[
c_0(\lambda, \mathbb{Z}):=\left\{x=(x_n)_{n\in\mathbb{Z}}:\lim_{|n|\to\infty}x_n\lambda_n=0\right\}
\]
and
\[
c(\lambda,\mathbb{Z}):=\left\{x=(x_n)_{n\in\mathbb{Z}}:\exists\,L_1,L_2\in\K\text{ such that }\lim_{n\to-\infty}x_n\lambda_n=L_1\text{ and }\lim_{n\to\infty}x_n\lambda_n=L_2\right\}.
\]
For $1\leq p<\infty$, the space $\ell^p(\lambda,\mathbb{Z})$ is defined by
\[
\ell^p(\lambda, \mathbb{Z}):=\left\{x=(x_n)_{n\in\mathbb{Z}} :\sum_{n\in\mathbb{Z}}|x_n\lambda_n|^p <\infty\right\},
\]
which is a Banach space when endowed with the norm $\|x\|_p:=\left(\sum_{n\in\mathbb{Z}}|x_n\lambda_n|^p \right)^{1/p}$, for $x\in\ell^p(\lambda, \mathbb{Z})$. Now, consider nonzero weights $w:=(w_n)_{n\in \Z}$ such that 
\begin{equation}\label{cond}
\sup_{n\in\mathbb{Z}}\left|w_{n}\frac{\lambda_n}{\lambda_{n+1}}\right|
<\infty.
\end{equation}
Let $X\subset \ell^{p}(\lambda,\Z)$ ($1\le p \le \infty$) be a closed subspace such that
\begin{itemize}
    \item[(CS1)] $e_n\in X$ for all $n\in\Z$;
    \item[(CS2)] $B_w(X)= X$;
    \item[(CS3)] $X$ is invariant under the canonical projections $P_M$ and $P_N$, where $M$ and $N$ are the subspaces defined in condition (CB).
\end{itemize}
Then, using the Open Mapping Theorem, it is easy to verify that $X$ is an admissible Banach sequence space for $B_w$. Note that condition (CS3) is automatically satisfied for $1\le p < \infty$.

When $\lambda=(\lambda_n)_{n\in\mathbb{Z}}=(1)_{n\in \Z}$, we denote $\ell^p(\lambda,\mathbb{Z})$ ($1\leq p\le\infty$), $c_0(\lambda, \mathbb{Z})$ and $c(\lambda,\mathbb{Z})$ by $\ell^p(\mathbb{Z})$ ($1\leq p\le\infty$), $c_0(\mathbb{Z})$ and $c(\mathbb{Z})$, respectively.
    \end{example}
    \subsection{Admissible spaces of continuous functions}
    Let $\Omega$ be a locally compact Hausdorff space. We denote by $C_b(\Omega)$ the Banach space over $\mathbb{K}$ 
of all bounded continuous maps $\varphi : \Omega \to \mathbb{K}$ endowed with the supremum norm
$$ \|\varphi\|_{\infty} := \sup_{x \in \Omega} |\varphi(x)|.$$
We denote by $C_0(\Omega)$ the space of all continuous maps $\varphi : \Omega \to \mathbb{K}$ that vanish at infinity. Note that $C_0(\Omega)$ is a closed subspace of $C_b(\Omega)$. Given any bounded map $\varphi : \Omega \to \mathbb{K}$ and any $B \subset \Omega$, 
we define
\[
\|\varphi\|_{\infty,\,B} := \sup_{x \in B} |\varphi(x)|,
\]
where we consider this supremum to be $0$ if $B = \varnothing$. In the case $B = \Omega$, we usually write $\|\varphi\|_{\infty}$ instead of $\|\varphi\|_{\infty,\,\Omega}$. Recall that the \emph{support} of $\varphi : \Omega \to \mathbb{K}$ is defined by
\[
\operatorname{supp}\varphi :=\overline{ \{x \in \Omega : \varphi(x) \neq 0\}}.
\]
We also consider the space $C_c(\Omega)$, which is the subspace of $C_0(\Omega)$ consisting of those $\varphi \in C_0(\Omega)$
that have compact support. We now define the concepts that are particular to this work.
    \begin{definition}\label{DefIS}
        We say that the triple $(\Omega,f,w)$ is an {\it invertible system} if the following conditions hold:
        \begin{enumerate}
            \item $\Omega$ is a locally compact Hausdorff space;
            \item $w: \Omega \to \K$ is a continuous function such that $0 < \inf_{x \in \Omega} |w(x)| \le \sup_{x \in \Omega}|w(x)|< \infty$;
            \item $f: \Omega \to \Omega$ is a homeomorphism.
        \end{enumerate}
    \end{definition}
    Fix an invertible system $(\Omega,f,w)$. To simplify notation, we define the following sequence of functions $(w^{[n]})_{n\in \Z}$, where 
   $$ \begin{cases}
  w^{[0]}:=1, w^{[1]}:=w, w^{[-1]}:=\frac{1}{w\circ f^{-1}};  \\
w^{[n]}:=(w\circ f^{n-1})w^{[n-1]}, \quad \text{for } n\ge 2;  \\
w^{[-n]}:=\frac{1}{w\circ f^{-n}}w^{[-(n-1)]}, \quad \text{for } n\ge 2.
\end{cases}$$
\begin{definition}\label{DEF}
    Let $(\Omega,f,w)$ be an invertible system, and let $W\subset \Omega$ be a relatively compact and connected set with non-empty interior such that $W\subset \overline{\operatorname{int}(W)}$. Let $X$ be a closed subspace of $C_b(\Omega)$. We say that $X$ is an {\it admissible space for} $C_{w,f}$, {\it generated by } $W$, if the following conditions hold:
    \begin{itemize}
            \item[(AS1)] The {\it weighted composition operator} $C_{w,f}:X \to X$, defined by $\varphi \mapsto (\varphi\circ f)w$ for all $\varphi \in X$, is an invertible operator;
            \item[(AS2)] $\Omega = \bigsqcup_{k\in \Z} f^k(W)$;
            \item[(AS3)] There exist continuous functions $\rho_-,\rho_+: \Omega \to \K$ such that $\rho_{\pm} \varphi \in X$ for all $\varphi\in X$, $\left\| \rho_{\pm} \right\|_{\infty}=1$, $\operatorname{supp}(\rho_-)= \Omega^{0}_{-}$, $\operatorname{supp}(\rho_+)= \Omega^{0}_{+}$  and $\rho_- + \rho_+\equiv 1$, where 
            $$\Omega^{n}_{+}:=\overline{\bigsqcup_{k=n}^{\infty} f^k(W)}\quad \text{and} \quad \Omega^{n}_{-}:=\overline{\bigsqcup_{k=n}^{\infty} f^{-k}(W)}\quad \text{for all }n\in \N_0;$$
            \item[(AS4)] For each $x\in \Omega$, the set $S_x:=\left\{ (\varphi(f^n(x)))_{n\in\Z}: \varphi \in X \right\}$ is closed in $\ell^{\infty}(\Z)$ and satisfies conditions (CS1) and (CS3); 
            \item[(AS5)] $C_c(\Omega)\subset X$
        \end{itemize}
\end{definition}
 \begin{remark}
The purpose of the conditions in Definition \ref{DEF} is, similarly to what was done in \cite{DAnDarMai21} in a measure-theoretic setting, to use the characterization of the shadowing property for weighted shifts to characterize this property in the more general context of weighted composition operators acting on spaces of continuous functions. Conditions (AS2) and (AS4) play an analogous role to that of the dissipativity condition in \cite{DAnDarMai21}, while the relative compactness and connectedness assumptions on $W$, together with the uniform continuity of $w$ imposed in Theorem \ref{MainTheorem}, play an analogous role to that of the bounded distortion condition in \cite{DAnDarMai21}. Finally, condition (AS3) will allow us to use an argument weaker than generalized hyperbolicity in order to obtain the shadowing property in the implication ($\Leftarrow$) of Theorem \ref{MainTheorem}.
\end{remark}
\begin{remark}\label{rmkk0}
    In the context of the last definition, we make the following observations.
    \begin{itemize}
        \item[(a)] When $w\equiv 1$, we denote $C_{w,f}$ by $C_{f}$ and say that $C_f$ is a {\it composition operator}.
        \item[(b)] It is easy to see that $(C_{w,f})^n(\varphi)=(\varphi\circ f^n)w^{[n]}$ for all $n\in \Z$, and for all $\varphi\in X$. 
        \item[(c)] By (AS2), it follows that $f^n(x)\neq f^m(x)$ whenever $n\neq m$ for every $x\in\Omega$.
    \end{itemize}
\end{remark}
In the following, we present some lemmas that will be used in the proof of Theorem \ref{MainTheorem}.
    \begin{lemma}\label{lead1}
        Let $X$ be an admissible space for the weighted composition operator $C_{w,f}$. Then the following assertions hold: 
        \begin{itemize}
            \item[\rm (a)] For each $x\in \Omega$, $S_x$ is an admissible Banach sequence space for the bilateral weighted backward shift $B_{w_x}$ with weights $w_x:=(w(f^n(x)))_{n\in\Z}$.
            \item[\rm (b)] For each $x\in \Omega$, the limits $\lim_{n \to \infty} \sup_{k \in \mathbb{Z}} \left| w^{[n]}(f^k(x)) \right|^{\frac{1}{n}}$, $ \lim_{n \to \infty} \sup_{k \in \mathbb{Z}} \left| w^{[-n]}(f^k(x)) \right|^{\frac{1}{n}}$, \\
            $\lim_{n \to \infty} \sup_{k \in \mathbb{N}_0}  
    \left| w^{[n]}(f^{-k-n}(x)) \right|^{\frac{1}{n}}$, and $\lim_{n \to \infty} \sup_{k \in \mathbb{N}}  
    \left| w^{[-n]}(f^{k+n}(x)) \right|^{\frac{1}{n}}$ exist.
        \end{itemize}
    \end{lemma}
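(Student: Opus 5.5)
For part (a), fix $x\in\Omega$ and look at the evaluation map $\Pi_x:X\to\ell^\infty(\Z)$, $\Pi_x(\varphi)=(\varphi(f^n(x)))_{n\in\Z}$. Its image is $S_x$. The key observation is the conjugacy
\[
\Pi_x(C_{w,f}\varphi)=\bigl(\varphi(f^{n+1}(x))\,w(f^n(x))\bigr)_{n\in\Z}=B_{w_x}(\Pi_x\varphi),
\]
so $B_{w_x}(S_x)=\Pi_x(C_{w,f}(X))$. By (AS1), $C_{w,f}(X)=X$, and therefore $B_{w_x}(S_x)=S_x$; this is (CS2).

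The remaining hypotheses of Example~\ref{ex1} follow directly from the assumptions:
\begin{itemize}
\item (CS1) and (CS3), and the closedness of $S_x$ in $\ell^\infty(\Z)$, are exactly (AS4).
\item Condition \eqref{cond} with $\lambda\equiv 1$ holds because $\sup|w|<\infty$.
\item The weights $w_x$ are nonzero since $\inf|w|>0$.
\end{itemize}
Example~\ref{ex1} then says $S_x$ is admissible for $B_{w_x}$. Behind that example, $S_x$ is a closed subspace of $\ell^\infty(\Z)$ with the sup norm, so (C1) is immediate and $\|e_m\|=1$. In the Example's Open Mapping argument, $B_{w_x}$ is bounded on $\ell^\infty$ with bounded inverse (using $\inf|w|>0$), and it maps $S_x$ onto itself. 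Hence its restriction to $S_x$ is invertible, and (C2) reduces to the coordinatewise estimates $|w_{k-n}\cdots w_{k-1}|\,|x_k|\le \sup(\ldots)\|x\|$. The main point to check carefully is that the inverse of the restriction is the restriction of the inverse; this holds because $B_{w_x}$ is injective on $\ell^\infty$.

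For part (b), I apply Lemma~\ref{lnor} to $S_x$, where all $\|e_k\|=1$. This gives
\[
\|(B_{w_x})^n\|=\sup_{k\in\Z}|w(f^{k-n}(x))\cdots w(f^{k-1}(x))|=\sup_{k\in\Z}|w^{[n]}(f^{k}(x))|,
\]
after reindexing $k\mapsto k+n$ and using $w^{[n]}(y)=w(y)w(f(y))\cdots w(f^{n-1}(y))$. Similarly, $\|(B_{w_x}^{-1})^n\|=\sup_k 1/|w(f^k x)\cdots w(f^{k+n-1}x)|$. Since $w^{[-n]}(y)=1/(w(f^{-1}y)\cdots w(f^{-n}y))$, this equals $\sup_k|w^{[-n]}(f^{k}(x))|$, again after shifting $k$ by $n$. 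The Spectral Radius Theorem then gives existence of the first two limits.

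For the last two limits, I use Lemma~\ref{lnor}(c). This expresses $\|(B_{w_x}|_M)^n\|$ as $\sup_{k\in\N_0}|w(f^{-k-1}x)\cdots w(f^{-k-n}x)|=\sup_{k\in\N_0}|w^{[n]}(f^{-k-n}(x))|$. Likewise, $\|(B_{w_x}^{-1}|_N)^n\|=\sup_{k\in\N}|w^{[-n]}(f^{k+n}(x))|$. The subspaces $M$ and $N$ are closed (by (CB), which (CS1) and (CS3) give) and invariant, so the restrictions are operators on Banach spaces. The Spectral Radius Theorem, or alternatively submultiplicativity plus Fekete's Lemma, yields the limits. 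The only real care needed in this part is the index bookkeeping in these identifications.
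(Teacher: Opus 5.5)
Your proposal is correct and follows essentially the same route as the paper. The intertwining $\Pi_x\circ C_{w,f}=B_{w_x}\circ\Pi_x$ together with the surjectivity of $C_{w,f}$ gives $B_{w_x}(S_x)=S_x$. Then (AS4) and Example~\ref{ex1} give admissibility, and part (b) follows from Lemma~\ref{lnor} (with $\|e_k\|_\infty=1$) and the Spectral Radius Theorem; your index bookkeeping identifying the four operator norms with the orbit suprema of $|w^{[\pm n]}|$ is accurate, and your remark on invertibility of the restriction to $S_x$ just fills in what the paper leaves to Example~\ref{ex1}.
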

    \begin{proof}
        (a): Fix $x\in \Omega$. Since 
        $$B_{w_x}((\varphi(f^{n}(x)))_{n\in \mathbb{Z}})=(\varphi(f^{n+1}(x))w(f^n(x)))_{n\in \mathbb{Z}}= ((C_{w,f}(\varphi))(f^n(x)))_{n\in \mathbb{Z}}\in S_x,$$
         for all $\varphi \in X$, we have $B_{w_x}(S_x)\subset S_x$. Moreover, given $(\varphi(f^{n}(x)))_{n\in \mathbb{Z}}\in S_x$, for some $\varphi \in X$, we have $((C^{-1}_{w,f}(\varphi))(f^{n}(x)))_{n\in \mathbb{Z}}\in S_x$ and it is easy to see that
        $$B_{w_x}(((C^{-1}_{w,f}(\varphi))(f^{n}(x)))_{n\in \mathbb{Z}})=(\varphi(f^{n}(x)))_{n\in \mathbb{Z}}.$$
        Thus, $B_{w_x}(S_x)=S_x$. Furthermore, by Definition \ref{DefIS}, we have $\sup_{n \in \Z}|w(f^n(x))|< \infty$. Therefore, by condition (AS4) and Example \ref{ex1}, $S_x$ is an admissible Banach sequence space for $B_{w_x}$.

        (b): Fix $x\in \Omega$. By item (a), $S_x$ is an admissible Banach sequence space for $B_{w_x}$. Then, by Lemma \ref{lnor}, $\left\| (B_{w_x})^n \right\|_{\infty}= \sup_{k \in \mathbb{Z}} \left| w^{[n]}(f^k(x)) \right|$, $\left\| (B_{w_x}^{-1})^n \right\|_{\infty}=\sup_{k \in \mathbb{Z}} \left| w^{[-n]}(f^k(x)) \right|$,
        $\left\| (B_{w_x}|_M)^n \right\|_{\infty}=\sup_{k \in \mathbb{N}_0}\left| w^{[n]}(f^{-k-n}(x)) \right|$ and $\left\| (B_{w_x}^{-1}|_N)^n \right\|_{\infty}=\sup_{k \in \mathbb{N}} \left| w^{[-n]}(f^{k+n}(x)) \right|$, where the subspaces $M$ and $N$ come from Lemma \ref{lnor}. Therefore, by the Spectral Radius Theorem, we conclude item (b). 
    \end{proof}
    
    \begin{lemma}\label{lead2}
         Let $X$ be an admissible space for the weighted composition operator $C_{w,f}$. Then the limits $\lim_{n \to \infty} \left\| w^{[n]} \right\|_{\infty}^{\frac{1}{n}}$, $\lim_{n \to \infty} \left\| w^{[-n]} \right\|_{\infty}^{\frac{1}{n}}$, $\lim_{n \to \infty} \left\| w^{[n]} \right\|_{\infty,\,\Omega^{n}_{-}}^{\frac{1}{n}}$, and $\lim_{n \to \infty} \left\| w^{[-n]} \right\|_{\infty,\,\Omega^{n}_{+}}^{\frac{1}{n}}$ exist.         
    \end{lemma}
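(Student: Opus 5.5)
Each of the four sequences will be shown to be positive and submultiplicative, and then the Submultiplicative Fekete's Lemma gives existence of the limits. Positivity is immediate: by Definition \ref{DefIS}, $0<\inf|w|\le\sup|w|<\infty$, so every $w^{[n]}$, $n\in\Z$, is bounded and bounded away from zero. The sets $\Omega^n_\pm$ are nonempty, since they contain $f^{\pm n}(W)$ and $W\neq\varnothing$. Hence all four quantities are finite and strictly positive.

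For the first two sequences I use the cocycle identity
\[
w^{[m+n]}=(w^{[m]}\circ f^{n})\,w^{[n]}\qquad (m,n\in\N).
\]
It follows from the recursive definition, or from Remark \ref{rmkk0}(b) via $C_{w,f}^{m+n}=C_{w,f}^m C_{w,f}^n$, after unwinding the definition pointwise. Since $f$ is a bijection, $\|w^{[m]}\circ f^n\|_\infty=\|w^{[m]}\|_\infty$, so $\|w^{[m+n]}\|_\infty\le\|w^{[m]}\|_\infty\|w^{[n]}\|_\infty$. The analogous identity $w^{[-(m+n)]}=(w^{[-m]}\circ f^{-n})\,w^{[-n]}$ gives the same bound for negative indices. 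Fekete's lemma then yields the first two limits.

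For the restricted norms I need the invariance $f^{-1}(\Omega^{k}_-)\subset\Omega^{k+1}_-$. This holds because $f^{-1}$ is a homeomorphism, so it commutes with closures and sends $\bigsqcup_{j\ge k}f^{-j}(W)$ onto $\bigsqcup_{j\ge k+1}f^{-j}(W)$. More generally $f^{-n}(\Omega^m_-)=\Omega^{m+n}_-$. Now write $w^{[m+n]}=(w^{[n]}\circ f^{m})\,w^{[m]}$. For $x\in\Omega^{m+n}_-$ we have $x\in\Omega^m_-$, since $\Omega^{m+n}_-\subset\Omega^m_-$, so $|w^{[m]}(x)|\le\|w^{[m]}\|_{\infty,\Omega^m_-}$. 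Also $f^m(x)\in f^m(\Omega^{m+n}_-)=\Omega^{n}_-$, so $|w^{[n]}(f^m(x))|\le\|w^{[n]}\|_{\infty,\Omega^n_-}$. Together these give
\[
\|w^{[m+n]}\|_{\infty,\Omega^{m+n}_-}\le\|w^{[m]}\|_{\infty,\Omega^m_-}\,\|w^{[n]}\|_{\infty,\Omega^n_-}.
\]
The case of $w^{[-n]}$ on $\Omega^n_+$ is symmetric. We use $f^{m}(\Omega^{k}_+)=\Omega^{k+m}_+$ and $w^{[-(m+n)]}=(w^{[-n]}\circ f^{-m})\,w^{[-m]}$, and observe that $f^{-m}(\Omega^{m+n}_+)=\Omega^n_+$. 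Fekete's lemma again gives the limits.

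The only delicate point is getting the index bookkeeping right: choosing the order in the cocycle splitting so that the shifted point lands in the correct set $\Omega^n_\pm$. Everything else is routine. Alternatively, the limits could be identified with spectral radii of restrictions as in Lemma \ref{lead1}(b), but the Fekete argument avoids needing any structure of $X$ beyond the invertible system.
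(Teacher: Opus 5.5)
Your argument is correct, but it takes a different route from the paper's. The paper identifies each quantity with an operator norm. Using (AS5) and an argument from \cite{BerBonPin26}, it obtains $\|C_{w,f}^n\|=\|w^{[n]}\|_\infty$ for $n\in\Z$. For the restricted quantities, it introduces the closed invariant subspaces $M=\{\varphi\in X:\varphi|_{\Omega^{1}_{+}}\equiv 0\}$ and $N=\{\varphi\in X:\varphi|_{\Omega^{1}_{-}}\equiv 0\}$. It then shows $\|(C_{w,f}|_M)^n\|=\|w^{[n]}\|_{\infty,\,\Omega^{n}_{-}}$, and the analogue for $C_{w,f}^{-1}|_N$. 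The lower bound there needs $W\subset\overline{\operatorname{int}(W)}$, continuity of $w^{[n]}$, and a compactly supported bump function lying in $M$. The Spectral Radius Theorem then gives the limits.

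You instead verify submultiplicativity directly, from the cocycle identity and the exact relations $f^{-n}(\Omega^{m}_{-})=\Omega^{m+n}_{-}$ and $f^{m}(\Omega^{k}_{+})=\Omega^{k+m}_{+}$, and then apply Fekete's lemma. Your route is shorter and self-contained. It is also more general: it uses nothing about $X$ (none of (AS1)--(AS5), nor the regularity of $W$). It needs only that $f$ is a homeomorphism, $w$ is bounded and bounded away from zero, and $W\neq\varnothing$.

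The paper's route gives extra information: the four limits are the spectral radii of $C_{w,f}$, $C_{w,f}^{-1}$, $C_{w,f}|_M$ and $C_{w,f}^{-1}|_N$, so conditions (A)--(C) acquire an operator-theoretic meaning. However, the proof of Theorem \ref{MainTheorem} only uses the existence of these limits, to pass from the subsequence $(nn_0)_{n\in\N}$ to the full sequence. Your argument therefore suffices for everything the paper needs.
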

         \begin{proof}
        Using condition (AS5) and the same argument as in the proof of \cite[Theorem 15]{BerBonPin26}, we obtain that $\left\| (C_{w,f})^n \right\|_{\infty}=\left\| w^{[n]} \right\|_{\infty}$ for all $n\in \Z$. Then, by the Spectral Radius Theorem, the limits $\lim_{n \to \infty} \left\| w^{[n]} \right\|_{\infty}^{\frac{1}{n}}$ and $\lim_{n \to \infty} \left\| w^{[-n]} \right\|_{\infty}^{\frac{1}{n}}$ exist.
        
        Define $M:=\left\{ \varphi\in X: \varphi|_{\Omega_{+}^{1}}\equiv 0 \right\}$. Note that $M$ is closed and $C_{w,f}(M)\subset M$. Indeed, given $\varphi\in M$ and $x\in \Omega_{+}^{1}$, we have $f(x)\in \Omega_{+}^{2}\subset  \Omega_{+}^{1}$, and hence $\varphi (f(x))w(x)=0$. Therefore, $C_{w,f}(\varphi)\in M$. Now, fix $n\in \N$. Since 
        $$\left\| (C_{w,f}|_M)^n(\varphi) \right\|_{\infty}=\left\| (\varphi\circ f^n)w^{[n]} \right\|_{\infty}=\left\| (\varphi\circ f^n)w^{[n]} \right\|_{\infty,\,\Omega_{-}^{n}}\le \left\| \varphi \right\|_{\infty}\left\| w^{[n]} \right\|_{\infty,\,\Omega_{-}^{n}}$$
        for all $\varphi\in M$, we have $\left\| (C_{w,f}|_M)^n \right\|_{\infty}\le \left\| w^{[n]} \right\|_{\infty,\,\Omega_{-}^{n}}$. On the other hand, take $\eps > 0$. Since $W\subset \overline{\text{int}(W)}$, there exists $x \in \text{int}\left( \bigsqcup_{k=n}^{\infty}f^{-k}(W) \right)$ such that $|w^{[n]}(x)| \geq \|w^{[n]}\|_{\infty,\,\Omega_{-}^{n}} - \eps$. Choose an open neighborhood $V$ of $f^n(x)$ in $\text{int}\left( \bigsqcup_{k=0}^{\infty}f^{-k}(W) \right)$ such that $\overline{V}$ is compact. Then, there exists a continuous map $\phi : \Omega \to [0,1]$ such that 
$\supp \phi\subset V$ and $\phi(f^n(x)) = 1$. Hence, $\phi \in M$ and $\|\phi\|_{\infty} = 1$. Since
\[
\|(C_{w,f}|_M)^n(\phi)\|_{\infty} \geq |\phi(f^n(x)) w^{[n]}(x)| = |w^{[n]}(x)| \geq \|w^{[n]}\|_{\infty,\,\Omega_{-}^{n}} - \eps,
\]
we obtain $\|(C_{w,f}|_M)^n\|_{\infty} \geq \|w^{[n]}\|_{\infty,\,\Omega_{-}^{n}} - \eps$. Since $\varepsilon>0$ is arbitrary, we have $\left\| (C_{w,f}|_M)^n \right\|_{\infty}=\left\| w^{[n]} \right\|_{\infty,\,\Omega_{-}^{n}}$. Then, by the Spectral Radius Theorem, the limit $\lim_{n \to \infty} \left\| w^{[n]} \right\|_{\infty,\,\Omega^{n}_{-}}^{\frac{1}{n}}$ exists. 

Now, defining $N:=\left\{ \varphi\in X: \varphi|_{\Omega_{-}^{1}}\equiv 0 \right\}$ and using arguments analogous to the previous ones for $C_{w,f}^{-1}|_N$, one proves that the limit $\lim_{n \to \infty} \left\| w^{[-n]} \right\|_{\infty,\,\Omega^{n}_{+}}^{\frac{1}{n}}$ exists.
    \end{proof}
    The next lemma will enable us to exhibit concrete classes of admissible spaces for weighted composition operators.
    \begin{lemma}\label{lead3}
       Let $(\Omega,f,w)$ be an invertible system and consider $X:= C_0(\Omega)$ or $X:= C_b(\Omega)$. Let $C_{w,f}$ be a weighted composition operator that is an invertible operator on $X$. Suppose that the following conditions hold:
       \begin{itemize}
           \item[\rm (H1)] The conditions (AS2) and (AS3) hold, where $W\subset \Omega$, arising from (AS2), is a relatively compact and connected set with non-empty interior such that $W\subset \overline{\operatorname{int}(W)}$;
           \item[\rm (H2)] Each compact set $K \subset \Omega$ intersects $f^j(W)$ only for finitely many indices $j \in \Z$. 
       \end{itemize}
       Then $X$ is an admissible space for $C_{w,f}$, generated by $W$.
    \end{lemma}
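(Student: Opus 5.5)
We have to verify (AS1)–(AS5) for $X=C_0(\Omega)$ or $X=C_b(\Omega)$. Conditions (AS1), (AS2) and (AS3) hold by hypothesis and (H1). For (AS5), $C_c(\Omega)\subset C_0(\Omega)\subset C_b(\Omega)$. So everything reduces to (AS4): for fixed $x\in\Omega$ we must show three things. First, $S_x=\{(\varphi(f^n(x)))_{n\in\Z}:\varphi\in X\}$ is closed in $\ell^\infty(\Z)$. Second, it contains every $e_n$. Third, it is invariant under the canonical projections $P_M$ and $P_N$.

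\textbf{Identifying $S_x$.} The plan is to show $S_x=c_0(\Z)$ when $X=C_0(\Omega)$ and $S_x=\ell^\infty(\Z)$ when $X=C_b(\Omega)$. Both are closed and satisfy (CS1) and (CS3) trivially, since multiplying by an indicator of a half-line preserves both spaces.

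The inclusion ``$\subset$'' for $C_b$ is immediate. For $C_0$, take $\varphi\in C_0(\Omega)$ and $\eps>0$, and let $K=\{|\varphi|\ge\eps\}$, which is compact. By (AS2) each $f^n(x)$ lies in $f^{n+k_0}(W)$, where $x\in f^{k_0}(W)$. By (H2), $K$ meets only finitely many $f^j(W)$, so $|\varphi(f^n(x))|<\eps$ for all but finitely many $n$. Hence $(\varphi(f^n(x)))_n\in c_0(\Z)$.

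\textbf{The reverse inclusion.} This is the main obstacle: given a bounded sequence $(a_n)$ (in $c_0$ for the $C_0$ case), we must build $\varphi$ with $\varphi(f^n(x))=a_n$ and $\|\varphi\|_\infty\le\sup|a_n|$. The points $f^n(x)$ are distinct by Remark~\ref{rmkk0}(c). We also need them to form a closed discrete set: any compact neighborhood of a point $y$ contains only finitely many $f^n(x)$ by (H2) and (AS2), so the orbit has no accumulation points in $\Omega$.

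By local compactness and Hausdorffness, and using this local finiteness, choose pairwise disjoint open sets $U_n\ni f^n(x)$ with compact closures such that the family $\{U_n\}$ is locally finite. Concretely, first take any open relatively compact $V_n\ni f^n(x)$. Then shrink $V_n$ to avoid the closed set $\{f^m(x):m\ne n\}$ and the earlier chosen $\overline{U_m}$, using regularity of locally compact Hausdorff spaces. Local finiteness follows by choosing the $U_n$ inside a neighborhood contained in $\bigcup_{|j-n-k_0|\le 1}$-type regions; if that is awkward, shrinking so that each $U_n$ lies in a fixed compact neighborhood $K_n$ of $f^n(x)$ with $K_n$ meeting only finitely many $f^j(W)$ suffices, since then any compact set meets only finitely many $U_n$.

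Urysohn's lemma gives $\psi_n\in C_c(\Omega)$ with values in $[0,1]$, $\psi_n(f^n(x))=1$ and $\supp\psi_n\subset U_n$. Set $\varphi=\sum_n a_n\psi_n$. This is continuous by local finiteness, bounded by $\sup|a_n|$ by disjointness, and interpolates the $a_n$. If $a_n\to0$, then $\{|\varphi|\ge\eps\}\subset\bigcup_{|a_n|\ge\eps}\overline{U_n}$, which is a finite union of compact sets, so $\varphi\in C_0(\Omega)$.

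This yields $S_x=c_0(\Z)$ or $S_x=\ell^\infty(\Z)$, which completes (AS4). Hence $X$ is admissible for $C_{w,f}$, generated by $W$.
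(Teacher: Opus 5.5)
Your overall strategy is the paper's: reduce to (AS4), identify $S_x$ with $c_0(\Z)$ or $\ell^\infty(\Z)$, get ``$\subset$'' from (H2), and get ``$\supset$'' by summing bump functions supported in pairwise disjoint neighborhoods of the orbit points.

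The gap is the local finiteness of $\{U_n\}$. This is exactly what makes $\varphi=\sum_n a_n\psi_n$ continuous in the $C_b(\Omega)$ case, and your justification of it does not work.

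Your fallback is false. Every compact $K_n$ meets only finitely many $f^j(W)$ automatically by (H2), so requiring $U_n\subset K_n$ gives no control uniform in $n$. For instance, take $\R$ with $f(t)=t+1$ and $x=0$. For $n\ge 3$ let $U_n=(n-\tfrac1{10},n+\tfrac1{10})\cup(\tfrac12+\tfrac1n-\eta_n,\tfrac12+\tfrac1n+\eta_n)$ with $\eta_n>0$ small, and $U_n=(n-\tfrac1{10},n+\tfrac1{10})$ otherwise. These sets are pairwise disjoint, relatively compact, avoid the other orbit points, and each lies in a compact neighborhood of $n$. Yet $[0,1]$ meets all of them. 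A Urysohn bump $\psi_n$ may equal $1$ at $\tfrac12+\tfrac1n$, and then $\sum_n\psi_n$ is discontinuous at $\tfrac12$.

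Your first suggestion, confining $U_n$ to $\bigcup_{|j-n-k_0|\le 1}f^j(W)$, is not justified either. Nothing in (H1)--(H2) says a neighborhood of $f^n(x)$ fits inside the three adjacent tiles, and you give no mechanism that produces any window of tile indices uniform in $n$.

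The missing idea is the one the paper uses: transport a single neighborhood along the orbit.
\begin{itemize}
\item Fix a compact neighborhood $V$ of $x$. By (H2) and (AS2), $V\subset\bigcup_{i\in J}f^i(W)$ for a finite $J\subset\Z$, hence $f^n(V)\subset\bigcup_{i\in J}f^{n+i}(W)$.
\item Since $f$ is a homeomorphism, $f^n(V)$ is a neighborhood of $f^n(x)$. So in your inductive construction you may choose $U_n\subset f^n(V)$. Take care that $\overline{U_n}$, not just $U_n$, misses the other orbit points and the earlier $\overline{U_m}$.
\item If a compact $L$ meets only the tiles $f^j(W)$ with $j$ in a finite set $J_L$, then $L\cap U_n\neq\emptyset$ forces $n\in J_L-J$, a finite set.
\end{itemize}
Since every point has a compact neighborhood, $\{U_n\}$ is then locally finite, and your $C_b(\Omega)$ argument goes through.

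For $C_0(\Omega)$ you do not need local finiteness at all. Disjoint supports and $a_n\to 0$ give $\bigl\|\varphi-\sum_{|n|\le N}a_n\psi_n\bigr\|_\infty\le\sup_{|n|>N}|a_n|\to 0$. So $\varphi$ is a uniform limit of functions in $C_c(\Omega)$ and lies in $C_0(\Omega)$, which is how the paper handles that case.
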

    \begin{proof}
       We first claim that for each $x\in \Omega$, there exists a sequence $(B_n)_{n \in \Z}$ of pairwise disjoint sets, where $B_n$ is a compact neighborhood of $f^n(x)$ for each $n\in \Z$. Indeed, take $x \in \Omega$. Since $\Omega$ is locally compact, $x$ admits a compact neighborhood $V_0$. By condition (H2), there exist $k\in \N$ and $J:=\left\{ j_1,\cdots, j_k \right\}\subset \Z$, with $j_1=0$, such that $f^j(x)\in V_0$ only if $j \in J$. Using additionally the assumption that $\Omega$ is a Hausdorff space, we can get pairwise disjoint compact sets $(V_n)_{n=1}^{k}$ such that $V_n$ is a neighborhood of $f^{j_n}(x)$ for all $n=1,\cdots,k$. Hence, defining $V:=V_0 \cap V_1$, we have $V\cap \left\{ f^n(x):n\in\Z \right\}=\left\{ x \right\}$. Put $B_0:=V$. Suppose that, for some $n\in\mathbb{N}_0$, we have already constructed pairwise disjoint compact sets $(B_k)_{k=-n}^{n}$ such that $B_k$ is a neighborhood of $f^k(x)$ and 
        $$B_k\cap\{f^\ell(x):\ell\in\mathbb{Z}\}=\{f^k(x)\} \quad\text{for every } k=-n,\ldots,n.$$
        Since $\Omega$ is a locally compact Hausdorff space, it is, in particular, regular. Since $f^{n+1}(x)$ and $f^{-n-1}(x)$ do not belong to $\bigsqcup_{k=-n}^{n}B_k$ and are distinct, we can find disjoint compact neighborhoods $C_{n+1}$ and $C_{-n-1}$ of $f^{n+1}(x)$ and $f^{-n-1}(x)$, respectively, such that
\[
\left(C_{n+1}\cup C_{-n-1}\right)\cap\bigsqcup_{k=-n}^{n}B_k=\emptyset.
\]
        Define $B_{n+1}:=C_{n+1}\cap f^{n+1}(V)$ and $B_{-n-1}:=C_{-n-1}\cap f^{-n-1}(V)$. Therefore, by induction, we construct a family $(B_n)_{n\in\mathbb{Z}}$ of neighborhoods satisfying the required properties.

        $\underline{\text{The case }C_0(\Omega)}:$ it only remains to verify condition (AS4), since the remaining conditions follow from the hypotheses. Take $x\in \Omega$. We will prove that $S_x = c_0(\Z)$. Let $\varphi \in C_0(\Omega)$ and take $\varepsilon > 0$. Then there is a compact set $K \subset \Omega$ such that $|\varphi(y)|< \varepsilon$, for all $y \in \Omega \setminus K$. By condition (H2), there exists $n_0 \in \N$ such that 
$$K \subset \bigsqcup_{k=-n_0}^{n_0} f^k(W).$$
On the other hand, there exists $n_1\in \N$ such that for all $n \in \Z$ with $|n|> n_1$, we have $f^n(x) \notin \bigsqcup_{k=-n_0}^{n_0} f^k(W)$ and, therefore, $|\varphi(f^n(x))|< \varepsilon.$ Thus, $(\varphi(f^{n}(x)))_{n \in \Z} \in c_0$. Since $\varphi$ is arbitrary, it follows that $S_x \subset c_0(\Z)$. Now, take $(a_n)_{n\in\Z} \in c_0(\Z)$. By the first part of the proof, there exists a sequence $(B_n)_{n \in \Z}$ of disjoint sets, where $B_n$ is a compact neighborhood of $f^n(x)$. For each $n\in \Z$, we can define a function $\varphi_n \in C_0(\Omega)$, with $\varphi_n(f^{n}(x))=a_n$, $\left\| \varphi_n \right\|_{\infty}=|a_n|$ and $\text{supp}(\varphi_n) \subset B_n.$ Thus, 
$$\varphi: = \sum_{n \in \Z}^{}\varphi_n \in C_0(\Omega)\quad \text{and}\quad (a_n)_{n\in\Z}=(\varphi(f^n(x)))_{n\in\Z} \in S_x.$$
Therefore, $S_x = c_0(\Z)$ and hence condition (AS4) holds.

$\underline{\text{The case }C_b(\Omega)}:$ as before, it only remains to verify condition (AS4). Take $x\in \Omega$. We will prove that $S_x = \ell^{\infty}(\Z)$. By the first part of the proof, there exists a sequence $(B_n)_{n \in \Z}$ of disjoint sets, where $B_n$ is a compact neighborhood of $f^n(x)$. Moreover, since $B_n\subset f^n(V)$ for some compact set $V$ and for every $n\in \Z$, there exist $r\in \N$ and integers $i_1, \cdots, i_r$ such that $B_n \subset \bigsqcup_{k=1}^{r}f^{n+i_k}(W)$ for every $n\in\Z$. Therefore, for each $y\in \Omega$, there is an open neighborhood $V_y$ of $y$ such that
\begin{equation}\label{eqne2}
    V_y\cap B_n \neq \emptyset \quad \text{only for finitely many indices }n\in \Z.
\end{equation}
Take $(a_n)_{n\in\Z} \in \ell^{\infty}(\Z)$. For each $n\in \Z$, we can define a function $\varphi_n \in C_c(\Omega)$, with $\varphi_n(f^{n}(x))=a_n$, $\left\| \varphi_n \right\|_{\infty}=|a_n|$ and $\text{supp}(\varphi_n) \subset B_n.$ Note that $\varphi(y): = \sum_{n \in \Z}^{}\varphi_n(y)$ is well-defined for every $y\in \Omega$, and $\left\| \varphi \right\|_{\infty}=\sup_{n\in \Z}\left| a_n \right|<\infty$. Now, take $y\in \Omega$ and consider the neighborhood $V_y$ of $y$ from (\ref{eqne2}). Thus, there exist $s\in \N$ and integers $k_1, \cdots, k_s$ such that 
$$\varphi= \sum_{j=1}^{s}\varphi_{k_j}\quad \text{in }V_y.$$
Then $\varphi$ is continuous in $V_y$ and, in particular, in $y$. Since $y$ is arbitrary, we have that $\varphi$ is continuous. Hence, $\varphi \in C_{b}(\Omega)$ and $(a_n)_{n\in\Z}=(\varphi(f^n(x)))_{n\in\Z} \in S_x.$ Thus, $\ell^{\infty}(\Z)\subset S_x$. It is easy to see that $S_x \subset \ell^{\infty}(\Z).$ Therefore, $S_x=\ell^{\infty}(\Z)$ and hence condition (AS4) holds.
\end{proof}
\begin{example}\label{exx1}
Take $X:= C_0(\R)$ or $X:=C_b(\R)$. Consider the map $f:\R\to\R$ given by $f(x)=x+1$, and let $w: \R\to \R$ be a bounded continuous function. 
Then $C_{w,f}$ coincides with the {\it bilateral weighted translation operator} $T_w: X \to X$ given by 
$$T_w : \varphi \in X \mapsto \varphi(\cdot + 1) w(\cdot) \in X.$$
If $ \inf_{x\in \R} |w(x)|>0$, then, by Lemma \ref{lead3}, $X$ is an admissible space for $T_w$, generated by $W:=[0,1)$.  
\end{example}
     \section{The case of weighted shifts on admissible Banach sequence spaces}
     In this section, Theorem \ref{Teo1} provides a complete characterization of the shadowing property for weighted shifts on admissible Banach sequence spaces. This theorem generalizes \cite[Theorem 18]{BerMe} and may be regarded as a first advance toward obtaining a characterization of this property for weighted shifts on general Fréchet sequence spaces, which remains an important open problem in linear dynamics. Theorem \ref{Teo1} is one of the main ingredients in the proof of Theorem \ref{MainTheorem}, which is a central result of this paper. To establish it, we will need the following lemma, which is a version of \cite[Lemma 19]{BerMe}.
     \begin{lemma}\label{l1}
         Let $(w_n)_{n \in \mathbb{N}_0}$ be a sequence of nonzero scalars and let $(a_{n})_{n\in \N_0}$ be a sequence of positive scalars. Then we have the implications (b)$\Rightarrow$(a) and (c)$\Rightarrow$(a), where
\begin{enumerate}
\item[\rm (a)] $\lim_{n \to \infty} \sup_{k \in \mathbb{N}} \left| w_k w_{k+1} \cdots w_{k+n-1} \right|^{\frac{1}{n}}\frac{a_{k}^{\frac{1}{n}}}{ a_{k+n}^{\frac{1}{n}}} < 1;$
\item[\rm (b)] $\sup_{k \in \mathbb{N}} \sum_{n=1}^{\infty} \left| w_k w_{k+1} \cdots w_{k+n-1} \right|\frac{a_k}{a_{k+n}} < \infty;$

\item[\rm (c)] $\sup_{k \in \mathbb{N}} \sum_{n=1}^{k} \left|  w_{k-1} \cdots w_{k-n} \right|\frac{a_{k-n}}{a_{k}} < \infty.$
\end{enumerate}
     \end{lemma}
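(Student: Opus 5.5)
We prove both implications by one submultiplicativity argument. For $k\in\N$ and $n\in\N_0$ set
\[
c_{k,n}:=\left| w_k w_{k+1}\cdots w_{k+n-1}\right|\frac{a_k}{a_{k+n}},\qquad c_{k,0}:=1 .
\]
Splitting the product shows the cocycle identity
\[
c_{k,n+m}=c_{k,n}\,c_{k+n,m}\qquad (k\in\N,\ n,m\in\N_0).
\]
Let $s_n:=\sup_{k\in\N}c_{k,n}$. Once we know $s_n<\infty$, the identity gives $s_{n+m}\le s_n s_m$. The submultiplicative Fekete's Lemma recalled in the Preliminaries then shows that the limit in (a) exists and equals $\inf_{n\in\N} s_n^{1/n}$. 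So in both cases it suffices to prove that $s_n\le C'/n$ for some constant $C'$ and all $n\in\N$. Indeed, choosing $n>C'$ then gives $s_n<1$, hence $\inf_n s_n^{1/n}<1$, which is (a).

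\emph{(b)$\Rightarrow$(a).} Let $C\ge 1$ be the supremum in (b).
\begin{itemize}
\item Each term of the series is at most the sum, so $c_{k,n}\le C$ for all $k,n$ (including $n=0$). In particular $s_n<\infty$.
\item For fixed $k$ and $n$, write $c_{k,n}=c_{k,j}\,c_{k+j,n-j}$ for each $j=1,\dots,n$ and add these $n$ identities:
\[
n\,c_{k,n}=\sum_{j=1}^{n}c_{k,j}\,c_{k+j,n-j}\le C\sum_{j=1}^{n}c_{k,j}\le C^2 .
\]
\end{itemize}
Hence $s_n\le C^2/n$, as required.

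\emph{(c)$\Rightarrow$(a).} Now the terms in (c) are $d_{k,n}:=|w_{k-1}\cdots w_{k-n}|\,a_{k-n}/a_k=c_{k-n,n}$, for $k\in\N$ and $1\le n\le k$. The natural index is therefore the right endpoint, so we split from the other end. Let $C\ge1$ be the supremum in (c).
\begin{itemize}
\item We have $c_{m,i}=d_{m+i,i}$ with $i\le m+i$, so $c_{m,i}\le C$. (Strictly, (c) allows $a_0$, $w_0$ through $n=k$, which is harmless.)
\item For $m,n\in\N$, write $c_{m,n}=c_{m,n-i}\,c_{m+n-i,i}$ for $i=1,\dots,n$ and sum:
\[
n\,c_{m,n}\le C\sum_{i=1}^{n}c_{m+n-i,i}=C\sum_{i=1}^{n}d_{m+n,i}\le C\sum_{i=1}^{m+n}d_{m+n,i}\le C^2 .
\]
\end{itemize}
Thus again $s_n\le C^2/n$, and (a) follows.

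The only delicate point is the index bookkeeping in (c). The sum there is truncated at $n=k$ and is indexed by the right endpoint, so one must split off the factor ending at $m+n$ rather than the one starting at $m$. One also has to check that every index used, namely $m+n-i\ge m\ge 1$ and $i\le m+n$, lies in the admissible range of (c).
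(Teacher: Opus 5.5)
Your proof is correct, and it takes a genuinely different route from the paper.

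\textbf{The paper's route.} It introduces the tail sums $R_k=\sum_{n\ge1}c_{k,n}$ for (b), respectively $R_k=\sum_{n=1}^{k}d_{k,n}$ for (c). It then uses the exact one-step identities $c_{k,1}(1+R_{k+1})=R_k$, respectively $c_{k,1}(1+R_k)=R_{k+1}$. Telescoping $c_{k,n}=c_{k,1}c_{k+1,1}\cdots c_{k+n-1,1}$ gives directly the geometric bound $c_{k,n}\le K\bigl(\tfrac{K}{K+1}\bigr)^{n-1}$. Fekete's Lemma is needed only to ensure that the limit in (a) exists.

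\textbf{Your route.} You average the cocycle identity over all split points to get the uniform polynomial bound $s_n\le C^2/n$. Submultiplicativity then does the rest: the ``$\lim=\inf$'' part of Fekete's Lemma turns a single $n$ with $s_n<1$ into exponential decay. This is a discrete Datko--Pazy type argument.

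\textbf{What each buys.} Your argument handles both implications by the same mechanism; only the end at which you split changes. You do not need to find the two separate recursion identities. It also uses only the inequality $c_{k,n+m}\le c_{k,n}c_{k+n,m}$, both in the averaging step and in $s_{n+m}\le s_ns_m$. So it would extend to norm-type (operator-valued) cocycles, where the paper's exact telescoping is unavailable. The paper's argument, in exchange, yields the explicit rate $K/(K+1)$. Yours gives only $\lim_n s_n^{1/n}\le (C^2/n)^{1/n}$ for any $n>C^2$, which is weaker but enough for (a).

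\textbf{One cosmetic fix.} The supremum in (b) or (c) may be smaller than $1$. You should set $C:=\max\{1,\sup\}$ rather than ``let $C\ge1$ be the supremum'', since you need $c_{k,0}=1\le C$.
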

     \begin{proof}
         ((b)$\Rightarrow$(a)): For each $k\in \N$, define $R_k:=\sum_{n=1}^{\infty} \left| w_{k}  \cdots w_{k+n-1} \right|\frac{a_{k}}{a_{k+n}}$ and let $K>0$ be a real number such that $R_k<K$ for all $k \in\N$. Note that 
         $$\left| w_k \right|\frac{a_{k}}{a_{k+1}}=\frac{R_k}{1+R_{k+1}}$$
for each $k\in\N$. Thus, for a fixed $k\in \N$, we have
\begin{align*}
\left|  w_k w_{k+1} \cdots w_{k+n-1} \right|\frac{a_{k}}{a_{k+n}}&=|w_{k}|\frac{a_{k}}{a_{k+1}}|w_{k+1}|\frac{a_{k+1}}{a_{k+2}}\cdots\left|  w_{k+n-1}  \right|\frac{a_{k+n-1}}{a_{k+n}}\\
&= \frac{R_k}{1+R_{k+1}}\frac{R_{k+1}}{1+R_{k+2}}\cdots \frac{R_{k+n-1}}{1+R_{k+n}}\\
&=\frac{R_{k+1}}{1+R_{k+1}}\frac{R_{k+2}}{1+R_{k+2}}\cdots \frac{R_{k}}{1+R_{k+n}}\\
&\overset{ }{\le} \left( \frac{K}{K+1} \right)^{n-1}K
\end{align*}
for every $n\in\N$, where in the last inequality we used the fact that the function $x\mapsto \frac{x}{x+1}$ is strictly increasing for $x>0$. Therefore, by the Submultiplicative Fekete's Lemma, this implication holds.

((c)$\Rightarrow$(a)): One proves it analogously to the proof of (b)$\Rightarrow$(a) by defining $$R_k := \sum_{n=1}^{k} \left| w_{k-1} \cdots w_{k-n} \right| \frac{a_{k-n}}{a_k}$$ and observing that $\left| w_k \right|\frac{a_k}{a_{k+1}}(1+R_k)=R_{k+1}$ for each $k \in \mathbb{N}$.
     \end{proof}
   
    \begin{theorem}\label{Teo1}
        Let $X$ be an admissible Banach sequence space for the bilateral weighted backward shift $B_w$, with nonzero weights $w := (w_n)_{n \in \mathbb{Z}}$. Then $B_w$ has the shadowing property if and only if one of the following conditions holds: 
        \begin{itemize}
            \item[\rm (I)] $\lim_{n\to \infty}\sup_{k\in \Z}\left( \left| w_{k}\cdots w_{k+n-1} \right|\frac{\left\| e_{k} \right\|}{\left\| e_{k+n} \right\|} \right)^{\frac{1}{n}}<1;$
            \item[\rm (II)] $\lim_{n\to \infty}\inf_{k\in \Z}\left( \left| w_{k}\cdots w_{k+n-1} \right|\frac{\left\| e_{k} \right\|}{\left\| e_{k+n} \right\|} \right)^{\frac{1}{n}}>1;$
            \item[\rm (III)] $\lim_{n\to \infty}\sup_{k\in \N}\left( \left| w_{-k-1}\cdots w_{-k-n} \right|\frac{\left\| e_{-k-n} \right\|}{\left\| e_{-k} \right\|} \right)^{\frac{1}{n}}<1$ and \\ $\lim_{n\to \infty}\inf_{k\in \N}\left( \left| w_{k}\cdots w_{k+n-1} \right|\frac{\left\| e_{k} \right\|}{\left\| e_{k+n} \right\|} \right)^{\frac{1}{n}}>1$.
        \end{itemize}
    \end{theorem}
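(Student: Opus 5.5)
The plan is to follow the scheme of \cite[Theorem 18]{BerMe}, using Lemma \ref{lnor} to turn all norm computations into suprema of weight products.

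\textbf{Sufficiency.} Use Lemma \ref{LL1}: it is enough to show that each of (I), (II), (III) gives generalized hyperbolicity.

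Under (I), Lemma \ref{lnor}(b) identifies the quantity in (I) with $\lim_n\|(B_w^{-1})^n\|^{1/n}$. Hence $B_w^{-1}$ has spectral radius $<1$, and we take the decomposition $X=\{0\}\oplus X$.

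Under (II), invert the products. The infimum condition says that $\sup_k\big(|w_k\cdots w_{k+n-1}|^{-1}\|e_{k+n}\|/\|e_k\|\big)^{1/n}$ is eventually $<1$ after reindexing. Comparing with Lemma \ref{lnor}(a), whose expression is $\sup_k|w_{k-n}\cdots w_{k-1}|\,\|e_{k-n}\|/\|e_k\|$, this gives $r(B_w)<1$. One has to check the exact correspondence of indices, but the two supremum and infimum expressions match after the substitution $k\mapsto k-n$. We then take $X=X\oplus\{0\}$.

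Under (III), take the splitting $X=M\oplus N$ from (CB). It satisfies $B_w(M)\subset M$ (since $(B_wx)_n=w_nx_{n+1}$ vanishes for $n>0$ when $x\in M$) and $B_w^{-1}(N)\subset N$. Lemma \ref{lnor}(c) together with the two limits in (III) gives $\|(B_w|_M)^n\|\le ct^n$ and $\|(B_w^{-1}|_N)^n\|\le ct^n$. Here the infimum over $k\in\N$ has to be converted into a supremum of reciprocals. A shift of $k$ by one index, or a finite number of indices, is harmless because the weights are bounded and bounded away from zero on finite sets. Thus $B_w$ is generalized hyperbolic.

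\textbf{Necessity.} Assume $B_w$ has shadowing. By \cite[Theorem 1]{BerPer} it also has positive shadowing. The first goal is the summability estimates (b) or (c) of Lemma \ref{l1} with $a_k=\|e_k\|$.

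The standard device is to test shadowing on pseudotrajectories built from canonical vectors. Take $x_0=0$ and define
\[
x_{j+1}=B_w x_j+\delta\,\frac{e_{m_j}}{\|e_{m_j}\|}.
\]
Let $y$ be a shadowing point. Condition (C1) lets us read off the coordinates: $|(x_j-B_w^jy)_m|\,\|e_m\|\le\|x_j-B_w^jy\|<\eps$. Since each pseudotrajectory is a finite sum of such perturbations, its coordinates are explicit sums of products of weights. The resulting coordinatewise bounds, at a single fixed coordinate, give uniform bounds on sums such as $\sum_n|w_k\cdots w_{k+n-1}|\,\|e_k\|/\|e_{k+n}\|$, or on the backward sums of type (c). Which type appears depends on whether one chooses $y$ to absorb the "future" or the "past" parts. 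This is exactly the dichotomy argument of \cite{BerMe}, where one shows that for each $k$ at least one of the two tails must be controlled. Lemma \ref{l1} then converts the summability into the exponential conditions.

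\textbf{Main obstacle.} The hardest step is the case analysis that yields exactly (I), (II) or (III), and excludes the "reverse" mixed configuration (expansion on the negative side and contraction on the positive side). For this I would imitate \cite{BerMe}. First prove the one-sided statement on each half-line: on $N$ the products grow or decay exponentially uniformly, and likewise on $M$. For this, apply Lemma \ref{l1} to the sequences indexed by $\N$, and to their reflection for the negative side.

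Next exclude the bad combination. In that case a pseudotrajectory that inserts perturbations near $0$ forever produces a sequence with unbounded coordinate mass at $e_0$, which no true orbit can follow. The $(C1)$ bound makes this contradiction quantitative.

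Finally, when both sides contract in the same direction, the two one-sided exponential bounds glue into the global bound (I) or (II). The gluing uses the supremum formula of Lemma \ref{lnor}: a product crossing $0$ splits into a negative-side factor and a positive-side factor, each bounded by $ct^{m}$, so the whole product is at most $c^2t^n$.
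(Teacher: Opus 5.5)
\textbf{Sufficiency.} You have swapped (I) and (II). Lemma \ref{lnor}(b) gives $\|(B_w^{-1})^n\|=\sup_k\bigl(|w_k\cdots w_{k+n-1}|\,\|e_k\|/\|e_{k+n}\|\bigr)^{-1}$, which is the \emph{reciprocal} of the quantity in (I). It is Lemma \ref{lnor}(a), after $k\mapsto k+n$, that equals $\sup_k|w_k\cdots w_{k+n-1}|\,\|e_k\|/\|e_{k+n}\|$, since $B_w^ne_{k+n}=w_k\cdots w_{k+n-1}e_k$. So (I) means $\|B_w^n\|\le Ct^n$ and requires $X=X\oplus\{0\}$. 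Likewise (II) means $\|B_w^{-n}\|\le Ct^n$ and requires $X=\{0\}\oplus X$. With your choices (GH2) fails: under (I), $B_w^{-n}$ expands exponentially. Your claim that the expressions ``match after $k\mapsto k-n$'' is false. It also contradicts your own correct treatment of (III), where you rightly turn the infimum into $\|(B_w^{-1}|_N)^n\|$. The fix is easy, but as written this part is wrong.

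\textbf{Necessity.} Here the decisive step is missing. Take your test pseudotrajectory with perturbations $\delta e^{i\theta_j}e_{s+N-j}/\|e_{s+N-j}\|$, with phases aligned to avoid cancellation over $\C$. Reading (C1) at coordinate $s+N$ at time $0$ and at coordinate $s$ at time $N$ gives only
\[
\delta\sum_{n=1}^{N-1}|w_s\cdots w_{s+n-1}|\frac{\|e_s\|}{\|e_{s+n}\|}<1+P_N,\qquad P_N:=|w_s\cdots w_{s+N-1}|\frac{\|e_s\|}{\|e_{s+N}\|}.
\]
This is a separate alternative for each pair $(s,N)$. Saying ``for each $k$ one tail is controlled'' gives neither $\sup_k$ bound in Lemma \ref{l1}, and the shadowing point is not yours to choose. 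The missing idea is as follows. Suppose (i) fails. The contrapositive of (b)$\Rightarrow$(a) in Lemma \ref{l1} then yields a \emph{single} $s$ and an $N_0$ with $\sum_{n=1}^{N_0}|w_s\cdots w_{s+n-1}|\,\|e_s\|/\|e_{s+n}\|>(1+\delta)/\delta^2$. The inequality above then forces $P_N>1/\delta$ for every $N>N_0$. Dividing by $\delta P_N$ gives $\sum_{k=1}^{N-1}\|e_{s+N}\|/(|w_{s+N-k}\cdots w_{s+N-1}|\,\|e_{s+N-k}\|)<1+1/\delta$ uniformly in $N$. This is (c) of Lemma \ref{l1} for the reciprocal weights, and it yields (ii).

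Your exclusion of (i)\&(iii) is also misdescribed. For $z_n=B_wz_{n-1}+\delta e_0/\|e_0\|$ the $e_0$-coordinate stays $\delta/\|e_0\|$; the mass accumulates along the $e_{-j}$. The actual argument applies (C1) at coordinates where $z_n$ vanishes. Using (iii) on forward iterates and (i) on backward iterates, this forces $a_k=0$ for all $k\neq 0$, so the shadowing point is $a_0e_0$. Then compare coordinate $-n_0$ at time $n_0+1$, for $n_0$ large. This gives $\delta|w_{-1}\cdots w_{-n_0}|\,\|e_{-n_0}\|/\|e_0\|<1$, contradicting (iii).

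Your gluing of (i)\&(iv) into (I) and of (ii)\&(iii) into (II) is fine.
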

    \begin{proof}
        ($\Rightarrow$): Given $\varepsilon=1$, by the shadowing property, there exists $\delta>0$ such that every $\delta$-pseudotrajectory is $\varepsilon$-shadowed. First, we will prove that one of the two conditions below holds: 
        \begin{itemize}
            \item[\rm (i)] $\lim_{n\to \infty}\sup_{k\in \N}\left( \left| w_{k}\cdots w_{k+n-1} \right|\frac{\left\| e_{k} \right\|_{}}{\left\| e_{k+n} \right\|_{}} \right)^{\frac{1}{n}}<1;$
            \item[\rm (ii)] $\lim_{n\to \infty}\inf_{k\in \N}\left( \left| w_{k}\cdots w_{k+n-1} \right|\frac{\left\| e_{k} \right\|_{}}{\left\| e_{k+n} \right\|_{}} \right)^{\frac{1}{n}}>1.$ 
        \end{itemize}
        Suppose that condition (i) is false. Thus, by Lemma \ref{l1} applied with $(a_k)_{k\in \N_0}:=\left( \left\| e_k \right\| \right)_{k\in \N_0}$, there exist $s\in \N$ and $N_0\in \N$ such that 
        \begin{equation}\label{eq1}
            \sum_{n=1}^{N_0} \left| w_s w_{s+1} \cdots w_{s+n-1} \right|\frac{\left\| e_{s} \right\|}{\left\| e_{s+n} \right\|} > \frac{1 + \delta}{\delta^2}.
        \end{equation}
        Fix $N>N_0$ and consider the following sequence in $X$:
        $$x_0 := e^{i\theta_0} \frac{e_{s+N}}{\left\| e_{s+N} \right\|}, \quad 
x_k := B_w(x_{k-1}) + \delta e^{i\theta_k} \frac{e_{s+N-k}}{\left\| e_{s+N-k} \right\|}, \quad k = 1, \ldots, N,$$
        $x_n := B_w(x_{n-1})$ for all $n>N$, and $x_{-n}:=B_{w}^{-n}(x_0)$ for all $n\in \N$, where the scalars $\theta_0, \theta_1, \ldots, \theta_N$ are chosen so that $\theta_N=0$ and
        $$e^{i\theta_k} w_s w_{s+1} \cdots w_{s+N-k-1}
= \left| w_s w_{s+1} \cdots w_{s+N-k-1} \right|
\quad \text{for each } 0 \leq k \leq N-1.$$
It is easy to see that $(x_n)_{n\in \Z}$ is a $\delta$-pseudotrajectory. Then, by the shadowing property, there exists $y=(y_n)_{n\in \Z}\in X$ such that 
\begin{equation}\label{eq2}
    \left\| B_w^k(y)-x_k \right\|<1 \quad \text{for all } k\in \Z.
\end{equation}
Note that 
\begin{align*}
x_N & = B_w^N(x_0) + \delta \sum_{k=1}^N B_w^{\,N-k}\!\left( e^{i\theta_k}\,\frac{e_{s+N-k}}{\|e_{s+N-k}\|} \right)\\
&=\left(
\frac{|w_s \cdots w_{s+N-1}|}{\|e_{s+N}\|}
+ \delta  \left( \sum_{k=1}^{N-1} \frac{|w_s \cdots w_{s+N-k-1}|}{\|e_{s+N-k}\|} \right)+\frac{\delta }{\left\| e_s \right\|}
\right) e_s.
\end{align*}
Moreover, by (\ref{eq2}) applied with $k=0$ and condition (C1) applied in the $(s+N)^{\text{th}}$ coordinate, we have 
\begin{equation}\label{eq3}
    \left| y_{s+N}-\frac{e^{i\theta_0}}{\left\| e_{s+N} \right\|_{}} \right|\left\| e_{s+N} \right\|<1.
\end{equation}
Therefore, by (\ref{eq2}) applied with $k=N$ and condition (C1) applied in the $s^{\text{th}}$ coordinate, we have 
$$ \left| \frac{|w_s \cdots w_{s+N-1}|}{\|e_{s+N}\|}
+ \delta  \left( \sum_{k=1}^{N-1} \frac{|w_s \cdots w_{s+N-k-1}|}{\|e_{s+N-k}\|} \right)+\frac{\delta }{\left\| e_s \right\|} - w_s \cdots w_{s+N-1}y_{s+N} \right|\left\| e_s \right\|_{}<1.$$
By the triangle inequality and (\ref{eq3}) we have
\begin{equation}\label{eq44}
     \delta \sum_{k=1}^{N-1}|w_s \cdots w_{s+N-k-1}| \frac{\left\| e_s \right\|}{\|e_{s+N-k}\|} -\left| w_s \cdots w_{s+N-1} \right|\frac{\left\| e_s \right\|}{\left\| e_{s+N} \right\|}<1.
\end{equation}
By (\ref{eq1}) and (\ref{eq44}) we have that $\frac{\|e_{s+N}\|}
{\left| w_s \cdots w_{s+N-1} \right|\,\|e_s\|}< \delta$. Thus, multiplying the inequality (\ref{eq44}) by $\frac{\left\| e_{s+N} \right\|}{\delta\left| w_s \cdots w_{s+N-1} \right|\left\| e_s \right\|}$ we obtain
$$\sum_{k=1}^{N-1}
\frac{\|e_{s+N}\|}
{\left| w_{s+N-k}\cdots w_{s+N-1} \right|\,\|e_{s+N-k}\|}<\frac{\|e_{s+N}\|}
{\delta\left| w_s \cdots w_{s+N-1} \right|\,\|e_s\|}
+ \frac{1}{\delta}<1+\frac{1}{\delta}.$$
Since the last inequality is valid for all $N>N_0$, by Lemma \ref{l1} applied with the sequences $(w'_n)_{n\in \N_0}=\left( \frac{1}{|w_{s+n+1}|} \right)_{n\in \N_0}$ and $(a_n)_{n\in\N_0}=\left( \frac{1}{\left\| e_{s+n+1} \right\|} \right)_{n\in \N_0}$,  we conclude (ii). 

Now consider the following sequence space $\left( X_{0}, \left\| \cdot  \right\|_{0} \right)$ given by 
$$X_0:=\left\{ (x_n)_{n\in \Z}:(x_{-n})_{n\in \Z}\in X \right\}$$
and 
$$\left\| (x_n)_{n\in \Z} \right\|_0:=\left\| (x_{-n})_{n\in \Z} \right\|, \quad \text{for all } (x_n)_{n\in \Z} \in X_0.$$
Note that $X_{0}$ is a Banach sequence space, $(e_n)_{n\in \Z}\subset X_0$, and satisfies condition (C1). Furthermore, the map $\phi: X \to X_0$ defined by $(x_n)_{n\in \Z} \mapsto (x_{-n})_{n\in \Z}$ is an isometric isomorphism. Thus, the weighted backward shift $B_{w'}: X_0 \to X_0$ with weights $w'=(w'_n)_{n\in\Z}:=\left( \frac{1}{w_{-(n+1)}} \right)_{n\in\Z}$ is an invertible operator on $X_0$ since
\begin{equation}\label{eq11}
    B_{w'}=\phi\circ B_w^{-1}\circ \phi^{-1}.
\end{equation}
Since $B_w$ has the shadowing property, its inverse $B_w^{-1}$ also has the shadowing property. Moreover, since shadowing is preserved under conjugation by isometric isomorphisms, it follows from (\ref{eq11}) that $B_{w'}$ has the shadowing property. Therefore, repeating the previous arguments for $B_{w'}$, we obtain that one of the two conditions below holds:
\begin{itemize}
    \item[\rm (iii)] $\lim_{n \to \infty} \ \inf_{k \in \mathbb{N}} \left( \left|  w_{-k-1} \cdots w_{-k-n} \right|\frac{\left\| e_{-k-n} \right\|}{\left\| e_{-k} \right\|} \right)^{\frac{1}{n}} > 1;$
    \item[\rm (iv)] $\lim_{n \to \infty} \ \sup_{k \in \mathbb{N}} \left( \left|  w_{-k-1} \cdots w_{-k-n} \right|\frac{\left\| e_{-k-n} \right\|}{\left\| e_{-k} \right\|} \right)^{\frac{1}{n}} < 1.$
\end{itemize}
Thus, we have four possible cases: (i) and (iii), (i) and (iv), (ii) and (iii), or (ii) and (iv). Since (I) is equivalent to (i) and (iv), (II) is equivalent to (ii) and (iii), and (III) is equivalent to (ii) and (iv), it remains to show that (i) and (iii) cannot both hold. Indeed, suppose that both (i) and (iii) hold. Consider the following sequence in $X$:
$$z_0=\frac{e_0}{\left\| e_0 \right\|},\quad z_n=B_w(z_{n-1})+\delta \frac{e_0}{\left\| e_0 \right\|} \quad \text{and} \quad
z_{-n}=B_w^{-1}(z_{-(n-1)}+\delta \frac{e_0}{\left\| e_0 \right\|}) \text{ for every } n\in \N.$$
Then we have 
$$z_n=\delta\left( \frac{e_0}{\left\| e_0 \right\|}+ \sum_{k=1}^{n-1} w_{-1}\cdots w_{-k}\frac{e_{-k}}{\left\| e_0 \right\|}\right)+w_{-1}\cdots w_{-n}\frac{e_{-n}}{\left\| e_0 \right\|},$$
where $\sum_{k=1}^{0} w_{-1}\cdots w_{-k}\frac{e_{-k}}{\|e_0\|}=0$ by convention, and
$$z_{-n}= \frac{e_n}{\left\| e_0 \right\|w_0\cdots w_{n-1}}+\delta\sum_{k=1}^{n}\frac{e_k}{\left\| e_0 \right\|w_0\cdots w_{k-1}}$$
for each $n\in \N$. It is easy to see that $(z_n)_{n\in \Z}$ is a $\delta$-pseudotrajectory. Then, by the shadowing property, there exists $a:=(a_n)_{n\in \Z}\in X$ such that 
\begin{equation}\label{eq12}
    \left\| B_w^n(a)-z_n \right\|<1 \quad \text{for all } n\in \Z.
\end{equation}
Fix $k\ge 1$. Suppose that $a_{-k}\ne0$. By (iii), there exists $n\in\N$ such that
\begin{equation}\label{eq13}
    \left|  w_{-k-1} \cdots w_{-k-n} \right|\frac{\left\| e_{-k-n} \right\|}{\left\| e_{-k} \right\|}>\frac{1}{|a_{-k}|\left\| e_{-k} \right\|}.
\end{equation}
 Since the $(-n-k)^{\text{th}}$ coordinate of $(B_w)^n(a)$ is $w_{-k-1} \cdots w_{-k-n} a_{-k}$, by (\ref{eq12}) and condition (C1) applied to the $(-n-k)^{\text{th}}$ coordinate, we obtain $\left|  w_{-k-1} \cdots w_{-k-n} \right|\left\| e_{-k-n} \right\||a_{-k}|<1$, which contradicts (\ref{eq13}). Thus, $a_{-k}=0$ for all $k\in \N$. In a similar way, we obtain $a_{k}=0$ for all $k\in \N$. Then $a=a_0e_0$. Consider $n_0\in \N$ such that $\left|  w_{-k-1} \cdots w_{-k-n} \right|\frac{\left\| e_{-k-n} \right\|}{\left\| e_{-k} \right\|}>\frac{1}{\delta}$ for all $k\in \N_0$ and $n\ge n_0$. By (\ref{eq12}) applied to $n=n_0+1$ and condition (C1) applied to the $(-n_0)^{\text{th}}$ coordinate, we obtain $\left|  w_{-1} \cdots w_{-n_0} \right|\frac{\left\| e_{-n_0} \right\|}{\left\| e_{0} \right\|}<\frac{1}{\delta}$, which is a contradiction.

 ($\Leftarrow$): Suppose that (I) holds. Then there exist $C>0$ and $t\in (0,1)$ such that 
 \begin{equation}\label{eq14}
     \sup_{k\in \Z} \left| w_{k}\cdots w_{k+n-1} \right|\frac{\left\| e_{k} \right\|}{\left\| e_{k+n} \right\|} \le Ct^n\quad \text{for all }n\in \N. 
 \end{equation}
Then, by (\ref{eq14}) and condition (C2), we obtain
 $$\left\| B_w^n(x) \right\|\le \sup_{k\in \Z} \left| w_{k}\cdots w_{k+n-1} \right|\frac{\left\| e_{k} \right\|}{\left\| e_{k+n} \right\|}\left\| x \right\|\le Ct^n\left\| x \right\|$$
for all $x\in X$ and $n\in \N$. Thus, $B_w$ is generalized hyperbolic with topological direct sum decomposition $X= X\oplus \left\{ 0 \right\}$. Therefore, by Lemma \ref{LL1}, $B_w$ has the shadowing property. Assuming now that (II) holds, and using the same arguments as before, we obtain that $B_w$ is generalized hyperbolic with topological direct sum decomposition $X=\left\{ 0 \right\}\oplus X$ and, therefore, by Lemma \ref{LL1}, $B_w$ has the shadowing property. To finish, suppose that (III) holds. Consider the topological direct sum decomposition $X=M\oplus N$ from condition (CB). It is easy to see that $B_w(M)\subset M$ and $B_w^{-1}(N)\subset N$. By (III), there exist $C>0$ and $t\in (0,1)$ such that 
\begin{equation}\label{eq16}
    \sup_{k\in \N_0} \left| w_{-k-1}\cdots w_{-k-n} \right|\frac{\left\| e_{-k-n} \right\|}{\left\| e_{-k} \right\|} \le Ct^n \quad \text{and}\quad \sup_{k\in \N} \frac{1}{\left| w_{k}\cdots w_{k+n-1} \right|}\frac{\left\| e_{k+n} \right\|}{\left\| e_{k} \right\|} \le Ct^n
\end{equation}
for all $n\in\N$. Then, by (\ref{eq16}) and condition (C2), we obtain
 $$\left\| B_w^n(x) \right\|\le \sup_{k\in \N_0} \left| w_{-k-1}\cdots w_{-k-n} \right|\frac{\left\| e_{-k-n} \right\|}{\left\| e_{-k} \right\|}\left\| x \right\|\le Ct^n\left\| x \right\|$$
for all $x\in M$ and $n\in \N$, and 
 $$\left\| (B^{-1}_w)^n(x) \right\|\le\sup_{k\in \N} \frac{1}{\left| w_{k}\cdots w_{k+n-1} \right|}\frac{\left\| e_{k+n} \right\|}{\left\| e_{k} \right\|}\left\| x \right\|\le Ct^n\left\| x \right\|$$
for all $x\in N$ and $n\in \N$. Thus, $B_w$ is generalized hyperbolic with topological direct sum decomposition $X=M\oplus N$. Therefore, by Lemma \ref{LL1}, $B_w$ has the shadowing property.
\end{proof}
\begin{remark}
Observe that, among the conditions defining admissibility, the proof of the implication $(\Rightarrow)$ uses only condition {\rm (C1)} and the fact that $(e_n)_{n\in \Z}\subset X$, whereas the proof of the implication $(\Leftarrow)$ uses only conditions (CB) and {\rm (C2)}.
\end{remark}       
The following corollary is a direct application of Theorem \ref{Teo1} to Example \ref{ex1}.
    \begin{corollary}\label{corter45} Let $\lambda=(\lambda_n)_{n\in\mathbb{Z}}$ and $w:=(w_n)_{n\in\Z}$ be sequences of nonzero scalars that satisfy (\ref{cond}). Let $X\subset \ell^{p}(\lambda,\Z)$ ($1\le p \le \infty$) be a closed subspace that satisfies conditions (CS1)-(CS3). Then $B_w: X \to X$ has the shadowing property if and only if one of the following conditions holds:
        \begin{itemize}
    \item[\rm (I)] $\lim_{n\to \infty}\sup_{k\in \Z}\left( \left| w_{k}\cdots w_{k+n-1} \right|\frac{|\lambda_k|}{|\lambda_{k+n}|} \right)^{\frac{1}{n}}<1;$
    
    \item[\rm (II)] $\lim_{n\to \infty}\inf_{k\in \Z}\left( \left| w_{k}\cdots w_{k+n-1} \right|\frac{|\lambda_k|}{|\lambda_{k+n}|} \right)^{\frac{1}{n}}>1;$
    
    \item[\rm (III)] $\lim_{n\to \infty}\sup_{k\in \N}\left( \left| w_{-k-1}\cdots w_{-k-n} \right|\frac{|\lambda_{-k-n}|}{|\lambda_{-k}|} \right)^{\frac{1}{n}}<1$ and \\ $\lim_{n\to \infty}\inf_{k\in \N}\left( \left| w_{k}\cdots w_{k+n-1} \right|\frac{|\lambda_k|}{|\lambda_{k+n}|} \right)^{\frac{1}{n}}>1$.
\end{itemize}
    \end{corollary}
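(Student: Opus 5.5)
The corollary should follow from Theorem \ref{Teo1} once we verify its hypotheses and compute $\|e_n\|$ in $X$. The plan has two steps: first confirm that $X$ is an admissible Banach sequence space for $B_w$, then substitute the norms of the canonical vectors into conditions (I)--(III) of Theorem \ref{Teo1}.

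\textbf{Step 1: admissibility.} Example \ref{ex1} asserts that any closed subspace $X\subset\ell^p(\lambda,\Z)$ satisfying (\ref{cond}) and (CS1)--(CS3) is admissible for $B_w$. I would check each requirement in turn.
\begin{itemize}
\item[(a)] Condition (\ref{cond}) gives $\|B_wx\|\le \sup_n|w_n\lambda_n/\lambda_{n+1}|\,\|x\|$, because
\[
|w_nx_{n+1}\lambda_n| = |w_n\lambda_n/\lambda_{n+1}|\,|x_{n+1}\lambda_{n+1}|.
\]
So $B_w$ is bounded on $\ell^p(\lambda,\Z)$.
\item[(b)] $B_w$ is injective, since all weights are nonzero, and (CS2) makes it surjective on $X$. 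The Open Mapping Theorem then shows $B_w$ is invertible on $X$.
\item[(c)] (CS1) together with (CS3) gives (CB). The projections $P_M$ and $P_N$ are coordinate truncations, so they are contractive, and hence the decomposition $X=M\oplus N$ is topological.
\item[(d)] (C1) holds because $|x_m\lambda_m|\le\|x\|_p$ and $\|e_m\|_p=|\lambda_m|$.
\item[(e)] For (C2), write $(B_w^n x)_j=w_j\cdots w_{j+n-1}x_{j+n}$. Its weighted modulus is
\[
|w_j\cdots w_{j+n-1}|\,\frac{|\lambda_j|}{|\lambda_{j+n}|}\,|x_{j+n}\lambda_{j+n}|.
\]
Put $k=j+n$; the coefficient becomes exactly $|w_{k-n}\cdots w_{k-1}|\,\|e_{k-n}\|/\|e_k\|$, and only indices with $x_k\ne0$ contribute. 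Bounding each coordinate by the supremum of these coefficients and summing in $\ell^p$ (or taking the supremum when $p=\infty$) gives (C2)(1). The same computation applied to $B_w^{-1}$ gives (C2)(2).
\end{itemize}

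\textbf{Step 2: translation.} Apply Theorem \ref{Teo1}. Since $\|e_k\|=|\lambda_k|$, every ratio $\|e_k\|/\|e_{k+n}\|$ in (I), (II) and (III) becomes $|\lambda_k|/|\lambda_{k+n}|$. The resulting conditions are exactly those in the corollary.

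I do not expect a real obstacle. The only step needing care is (C2) in the case $p=\infty$ for a proper closed subspace. There one must use that $B_w^n x$ is computed coordinatewise in the ambient space and that $X$ carries the norm inherited from $\ell^\infty(\lambda,\Z)$. With that observed, the coordinatewise estimate in Step 1(e) applies without change.
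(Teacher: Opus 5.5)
Your proposal is correct and follows the paper's route. The paper obtains the corollary as a direct application of Theorem \ref{Teo1} to the admissible spaces of Example \ref{ex1}, using $\|e_k\| = |\lambda_k|$; your Step 1 fills in the admissibility check that the paper leaves as ``easy to verify'' (boundedness via (\ref{cond}), invertibility via the Open Mapping Theorem, (CB) from (CS1) and (CS3), and the coordinatewise estimates for (C1) and (C2)).
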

    For the next example, given a subset $A\subset \N$, $\text{card}(A)$ denotes the cardinality of $A$. Moreover, we use the following notation
    $$\udens(A):=\limsup_{N \to \infty}\frac{\text{card}(\left\{ 1, \cdots, N \right\}\cap A)}{N}.$$
\begin{example}
Let $T:X\to X$ be an operator on a Banach space $X$. We say that $T$ satisfies the {\it Distributional Chaos Criterion} (DCC) if there exist sequences $(x_k)_{k\in \N}$ and $(y_k)_{k\in \N}$ in $X\setminus\{0\}$ such that:
\begin{itemize}
    \item[(DC1)] There exists $A\subset \N$ with $\udens(A)=1$ such that $\lim_{n\in A}T^n( x_k)=0$ for all $k\in\N$;
    \item[(DC2)] $y_k\in \overline{\operatorname{span}\{x_n:n\in\N\}}$, $y_k\to 0$, and there exist $\varepsilon>0$ and an increasing sequence $(N_k)$ in $\N$ such that
    $$\operatorname{card}\left\{1\leq j\leq N_k:\|T^j( y_k)\|>\varepsilon\right\}\geq \varepsilon N_k$$
    for all $k\in\N$.
\end{itemize}
It was proved in \cite[Theorem 12]{Nilson0} that $T$ is distributionally chaotic if and only if $T$ satisfies the DCC. Below, we provide two examples of weighted shifts that completely distinguish the shadowing property from distributional chaos.
   \begin{itemize}
       \item[(a)] Let $\lambda=(\lambda_j)_{j\in \Z}$ be a sequence of scalars given by
       $$\left( \lambda_{j} \right)_{j \in \Z}:=(\underset{j \le0}{\underbrace{\cdots,1,1,1}},1,B_{1},1,1,2,B_{2},2,1, \cdots, 1,2,\cdots,n,B_{n},n,\cdots,2,1,\cdots),$$
		where 
		$$B_{n}:=(\underset{10^n\text{ times}}{\underbrace{n+1,n+1, \cdots, n+1}} ) \quad \text{for all }n\in \N.$$
        We denote by $I_n$ the set of indices corresponding to the block $B_n$ for $n\in \N$. For example, $I_1=\left\{ 2,3,\cdots,11 \right\}$. Consider the bilateral weighted backward shift $B_w: \ell^{\infty}(\lambda,\Z)\to \ell^{\infty}(\lambda,\Z)$ with weights $w:=(w_n)_{n\in \Z}$ given by $w_{-n}=\frac{1}{2}$ if $n\in \N$, and $w_n=1$ if $n\in \N_0$. Note that 
       $$\inf_{k\in\N}\left( \left| w_{k}\cdots w_{k+n-1} \right|\frac{|\lambda_k|}{|\lambda_{k+n}|} \right)^{\frac{1}{n}}=\frac{1}{(n+1)^{\frac{1}{n}}}\quad\text{and}\quad\sup_{k\in\N}\left( \left| w_{k}\cdots w_{k+n-1} \right|\frac{|\lambda_k|}{|\lambda_{k+n}|} \right)^{\frac{1}{n}}=(n+1)^{\frac{1}{n}},$$
       for each $n\in \N$. Then, by Corollary \ref{corter45}, $B_w$ does not have the shadowing property. On the other hand, taking $(x_k)_{k\in \N}:=(e_k)_{k\in \N}$ and $A=\N$, straightforward computations show that (DC1) follows.  Note that if $j= 2\sum_{\ell=1}^{N}\ell +\sum_{\ell=1}^{N}10^\ell$ for some $N \in \N$, then $\lambda_{j}=1$. To prove condition (DC2), for each $k \in \N$ we take $N_k \in \N$ where $N_k:=2\sum_{\ell=1}^{n_k}\ell +\sum_{\ell=1}^{n_k}10^\ell$, for some $n_k \in \N$ sufficiently large such that 
		\begin{equation}\label{eq3w3w}
			\frac{\sum_{\ell=k}^{n_k}10^\ell}{N_k}> 1-\frac{1}{2k},
		\end{equation}
		 and $y_k:=\frac{e_{N_k}}{k}$. Note that $y_k\to 0$. Now, observe that 
		\begin{equation}\label{eqq3w3w}
			\left\| B_{w}^{j}(y_{k}) \right\|=\frac{\left\| w_{N_k-j}\cdots w_{N_k-1} e_{N_k-j} \right\|}{k}= \frac{\left\| e_{N_k-j} \right\|}{k}> 1\quad \forall j \in \N\text{ s.t. }N_k-j\in \bigcup_{i=k}^{n_k}I_{i}.
		\end{equation}
		Thus, by (\ref{eq3w3w}) and (\ref{eqq3w3w}) we obtain
		$$\text{card}\left\{ 1\le j\le N_k: \left\| B_{w}^{j}(y_{k}) \right\|> \frac{1}{2} \right\}\ge \sum_{\ell=k}^{n_k}10^\ell> \left( 1-\frac{1}{2k} \right)N_k\ge \frac{1}{2}N_k.$$
		Then condition (DC2) is satisfied with $\varepsilon=\frac{1}{2}$. Therefore, $B_w$ is distributionally chaotic.
       \item[(b)] Consider the bilateral weighted backward shift $B_w: c(\Z)\to c(\Z)$ with weights $w:=(w_n)_{n\in \Z}$ given by $w_n=2$ for every $n\in \Z$. It is easy to see that $B_w$ satisfies item (II) of Corollary \ref{corter45}. Therefore, $B_w$ has the shadowing property. On the other hand, $B_w$ does not satisfy the condition (DC1) for any choice of the sequence $(x_k)_{k\in \N}$ and the set $A\subset \N$. Indeed, for all $x \in c(\Z)\setminus \{0\}$ and $n \in \N$ we have $\left\| B_{w}^n(x) \right\|=2^n\left\| x \right\|$. Thus, $B_w$ is not distributionally chaotic.
   \end{itemize}
   \end{example}
    \section{The case of weighted composition operators on admissible spaces}
In this section, we establish a complete characterization of the shadowing property for weighted composition operators on admissible spaces, which is stated in Theorem~\ref{MainTheorem}. Before proving this result, we establish in Theorem~\ref{theo1} sufficient conditions for the shadowing property in a more general setting.
    \begin{theorem}\label{theo1}
         Let $(\Omega,f,w)$ be an invertible system. Let $\left( X, \left\| \cdot  \right\| \right)$ be a Banach space of continuous functions $\varphi : \Omega \to \K$ such that the following conditions hold:
         \begin{itemize}
             \item[\rm (A1)] The weighted composition operator $C_{w,f}:X \to X$ is an invertible operator;
             \item[\rm (A2)] $\left\| (C_{w,f})^n(\varphi) \right\|\le \left\| w^{[n]} \right\|_{\infty,\,f^{-n}(\operatorname{supp}(\varphi))}\left\| \varphi \right\|$ for all $\varphi \in X$ and $n\in \Z$;
             \item[\rm (A3)] There exist functions $\rho_-,\rho_+: \Omega \to \K$ that satisfy $\rho_{\pm} \varphi \in X$ and $\left\| \rho_{\pm}\varphi \right\|\le \left\| \varphi \right\|$ for all $\varphi\in X$, and $\rho_- + \rho_+\equiv 1$.
         \end{itemize}
         If one of the following conditions holds,
         \begin{itemize}
         \item[\rm (A)] $\lim_{n \to \infty} \left\| w^{[n]} \right\|_{\infty}^{\frac{1}{n}}<1;$ 
        \item[\rm (B)] $\lim_{n \to \infty} \left\| w^{[-n]} \right\|_{\infty}^{\frac{1}{n}} <1;$
        \item[\rm (C)] $\lim_{n \to \infty} \left\| w^{[n]} \right\|_{\infty,\,f^{-n}(\operatorname{supp}(\rho_-))}^{\frac{1}{n}}<1$ and $\lim_{n \to \infty} \left\| w^{[-n]} \right\|_{\infty,\,f^{n}(\operatorname{supp}(\rho_+))}^{\frac{1}{n}}<1,$
        \end{itemize}
         then $C_{w,f}$ has the shadowing property.
    \end{theorem}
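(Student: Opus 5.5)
The plan is to handle (A) and (B) through generalized hyperbolicity and Lemma \ref{LL1}, and (C) by writing down an explicit shadowing orbit. The projections $\rho_\pm$ do not give $C_{w,f}$-invariant subspaces, so (C) is not literally generalized hyperbolicity. Instead, I will split each error of the pseudotrajectory with $\rho_\pm$ and sum the contracting parts forward and backward. Throughout, write $T:=C_{w,f}$.

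\emph{Cases (A) and (B).} Assume (A). By the third item in the list of limit facts in the Preliminaries, there are $C>0$ and $t\in(0,1)$ with $\|w^{[n]}\|_\infty\le Ct^n$ for all $n\in\N$. Since $f^{-n}(\operatorname{supp}\varphi)\subset\Omega$, condition (A2) gives $\|T^n\varphi\|\le Ct^n\|\varphi\|$ for all $\varphi\in X$ and $n\in\N$. Enlarging $C$ to be at least $1$ covers $n=0$. Hence $T$ is generalized hyperbolic with decomposition $X=X\oplus\{0\}$, and Lemma \ref{LL1} gives shadowing. Case (B) is identical: apply (A2) with negative exponents to bound $\|T^{-n}\|$, and use the decomposition $X=\{0\}\oplus X$.

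\emph{Case (C).} Again by the limit facts, there are $C\ge 1$ and $t\in(0,1)$ such that, for all $m\in\N_0$,
\[
\|w^{[m]}\|_{\infty,\,f^{-m}(\operatorname{supp}\rho_-)}\le Ct^m \quad\text{and}\quad \|w^{[-m]}\|_{\infty,\,f^{m}(\operatorname{supp}\rho_+)}\le Ct^m .
\]
For any $d\in X$ we have $\operatorname{supp}(\rho_\pm d)\subset\operatorname{supp}\rho_\pm$. Combining this with (A2) and (A3) yields
\[
\|T^m(\rho_-d)\|\le Ct^m\|d\| \quad\text{and}\quad \|T^{-m}(\rho_+d)\|\le Ct^m\|d\| \qquad (m\in\N_0).
\]
Now let $(x_j)_{j\in\Z}$ be a $\delta$-pseudotrajectory, and set $d_j:=x_{j+1}-Tx_j$, so that $\|d_j\|\le\delta$. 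Define
\[
z_j:=\sum_{i<j}T^{\,j-1-i}(\rho_-d_i)-\sum_{i\ge j}T^{-(i+1-j)}(\rho_+d_i).
\]
By the estimates above, both series converge absolutely in the Banach space $X$, and $\|z_j\|\le 2C\delta/(1-t)$ uniformly in $j$.

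A direct index shift shows that $z_{j+1}-Tz_j=\rho_-d_j+\rho_+d_j=d_j$, using $\rho_-+\rho_+\equiv1$. Set $y:=x_0-z_0$. Then $u_j:=x_j-T^jy-z_j$ satisfies $u_{j+1}=Tu_j$ and $u_0=0$. Since $T$ is invertible, this forces $u_j=0$ for every $j\in\Z$. Therefore $\|x_j-T^jy\|=\|z_j\|\le 2C\delta/(1-t)$ for all $j$. Given $\varepsilon>0$, choosing $\delta<\varepsilon(1-t)/(2C)$ gives shadowing.

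The main point needing care is the support bookkeeping in (A2). We must check that $f^{-m}(\operatorname{supp}(\rho_-d))\subset f^{-m}(\operatorname{supp}\rho_-)$, so that the given limits really control $T^m(\rho_-d)$. For $T^{-m}(\rho_+d)$ we must likewise check that $f^{m}(\operatorname{supp}(\rho_+ d))\subset f^{m}(\operatorname{supp}\rho_+)$, which is exactly the domain appearing in (C). The remaining checks are routine: the telescoping identity for $z_j$, and making the $m=0$ terms consistent with the constant $C$.
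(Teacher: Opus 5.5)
Your proof is correct and follows essentially the same route as the paper. Cases (A) and (B) go through generalized hyperbolicity with the trivial decompositions and Lemma \ref{LL1}, and case (C) uses the explicit correction sequence $z_j$, which after reindexing is exactly the paper's $\phi_n=\phi_n^{(1)}+\phi_n^{(2)}$; your explicit care about $C\ge 1$ for the $m=0$ terms and about $\operatorname{supp}(\rho_\pm d)\subset\operatorname{supp}\rho_\pm$ only spells out what the paper uses implicitly.
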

    \begin{proof}
        Suppose that item (A) holds. Then, there exist $C>0$ and $0 < t <1$ such that $\left\| w^{[n]} \right\|_{\infty} \le Ct^n$ for all $n\in \N_0$. Hence, by condition (A2), we have 
        $$ \left\| C_{w,f}^n(\varphi) \right\|\le \left\| w^{[n]} \right\|_{\infty}\left\| \varphi \right\| \le Ct^n \left\| \varphi \right\| \quad \text{for all } \varphi\in X\text{ and } n\in\N_0.$$
       Thus, $C_{w,f}$ is generalized hyperbolic with topological direct sum decomposition $X=X\oplus \left\{ 0 \right\}$. Therefore, by Lemma \ref{LL1}, $C_{w,f}$ has the shadowing property. Assuming now that (B) holds, and using the same arguments as before, we obtain that $C_{w,f}$ is generalized hyperbolic with topological direct sum decomposition $X=\left\{ 0 \right\}\oplus X$. Therefore, once again by Lemma \ref{LL1}, we obtain that $C_{w,f}$ has the shadowing property.

       Now, suppose that (C) holds. Hence, there exist $C>1$ and $0 < t <1$ such that 
\begin{equation}\label{gh1}
   \left\| w^{[n]} \right\|_{\infty,\,f^{-n}(\operatorname{supp}(\rho_-))} \le Ct^n\quad \text{and} \quad \left\| w^{[-n]} \right\|_{\infty,\,f^{n}(\operatorname{supp}(\rho_+))} \le Ct^n\quad\text{for all }n \in \N_0.
\end{equation}
 By (\ref{gh1}) and conditions (A2) and (A3), we have 
\begin{equation}\label{gh2}
    \left\| C_{w,f}^n(\varphi\rho_{-}) \right\|\le\left\|w^{[n]} \right\|_{\infty,\,f^{-n}(\operatorname{supp}(\varphi\rho_{-}))}\left\|\varphi  \rho_{-}\right\|\le\left\|w^{[n]} \right\|_{\infty,\,f^{-n}(\operatorname{supp}( \rho_{-}))}\left\|\varphi \right\|\le Ct^{n}\left\|\varphi  \right\|
\end{equation}
for all $\varphi\in X$ and $n\in \N_0$. Analogously, we obtain
\begin{equation}\label{gh3}
    \left\| (C_{w,f}^{-1})^n(\varphi\rho_{+}) \right\|\le Ct^{n}\left\|\varphi  \right\|\quad \text{ for all }\varphi \in X \text{ and } n\in \N_0.
    \end{equation}
    Fix $\eps >0$. Put $\delta:= \frac{\eps(1-t)}{4C}$ and let $(\varphi_n)_{n \in \Z}$ be a $\delta$-pseudotrajectory. Define, for $n \in\Z$, $\psi_n := \varphi_{n+1}- C_{w,f}(\varphi_n) $. Moreover, for each $n\in \Z$, define
$$\phi_n^{(1)} = \sum_{k=0}^{\infty} C_{w,f}^k (\psi_{n-k-1} \rho_-) \quad \text{and} \quad  \phi_n^{(2)} = -\sum_{k=1}^{\infty} (C_{w,f}^{-1})^k (\psi_{n+k-1} \rho_+).$$
Observe that the above series converge by (\ref{gh2}) and (\ref{gh3}). Put $\phi_n:=\phi_{n}^{(1)}+\phi_{n}^{(2)}$ for each $n\in \Z$. Note that 
\begin{align*}
C_{w,f}(\phi_n)+ \psi_n &= \sum_{k=0}^{\infty} C_{w,f}^{k+1} (\psi_{n-k-1} \rho_-) -\sum_{k=1}^{\infty} (C_{w,f}^{-1})^{k-1} (\psi_{n+k-1} \rho_+) +  \psi_{n}\rho_-+ \psi_{n}\rho_+\\ 
& =\sum_{k=0}^{\infty} C_{w,f}^{k} (\psi_{n-k} \rho_-)-\sum_{k=1}^{\infty} (C_{w,f}^{-1})^{k} (\psi_{n+k} \rho_+)  =\phi_{n+1},
\end{align*}
for all $n\in\Z$. Thus, $\varphi_{n+1}-\phi_{n+1}=C_{w,f}(\varphi_{n}-\phi_{n})$ and, hence, $\varphi_{n}-\phi_{n}=C_{w,f}^n(\varphi_{0}-\phi_{0})$, for all $n \in \Z$. Since $\left\|\psi_n \right\|\le \delta$ for all $n\in\Z$, we obtain by (\ref{gh2}) and (\ref{gh3}) that
$$\left\| \phi_n^{(i)} \right\| \le \sum_{k=0}^{\infty} C\delta t^k=\frac{C\delta}{1-t} \quad \text{for all }i\in\left\{ 1,2 \right\}\text{ and }n\in \Z.$$
Thus, 
$$\left\| \varphi_n - C_{w,f}^n(\varphi_0 - \phi_0) \right\|=\left\| \phi_n \right\|\le \frac{2C\delta}{1-t}=\frac{\eps}{2}<\eps\quad \text{for each }n\in \Z.$$
Therefore, $C_{w,f}$ has the shadowing property.
    \end{proof}
    \begin{remark}\label{rmk1}
        From the proof, we see that items (A) and (B) imply that $C_{w,f}$ is generalized hyperbolic with topological direct sum decompositions $X=X\oplus\{0\}$ and $X=\{0\}\oplus X$, respectively, and is therefore hyperbolic. On the other hand, in the proof of implication $\text{(C)}\Longrightarrow \text{shadowing}$, we obtain a sum decomposition $X=M+N$ given by 
        $$M:=\left\{ \varphi\rho_-: \varphi \in X \right\}\quad \text{and}\quad N:=\left\{ \varphi\rho_+: \varphi \in X \right\},$$
        that is not, in general, a topological direct sum decomposition satisfying Definition \ref{defgen}. Therefore, in item (C), a weaker condition than generalized hyperbolicity is used to prove the shadowing. In other characterizations, such as those in \cite{BCDFP, BerMe, DAnDarMai21}, this does not occur, since generalized hyperbolicity is used in an essential way to establish the shadowing property. This may provide a possible approach to answering Problem F from \cite{BCDFP} in the negative.
    \end{remark}
    Let $\Omega\subset \C$ be a non-empty, open and relatively compact set. We denote by $H(\Omega)$ the space of all holomorphic functions $\varphi:\Omega\to\mathbb{C}$ and by $A(\Omega)$ the space of all continuous functions $\varphi:\overline{\Omega}\to\mathbb{C}$ such that $\varphi|_{\Omega}\in H(\Omega)$. For the next corollary, we first need the following definition.
    \begin{definition}
        Let $\mathbb{D}:=\{z\in\mathbb{C}:|z|<1\}$ and consider $p\in [1, \infty]$. The {\it Hardy space} $H^p(\mathbb{D})$ is the Banach space defined as follows:
        \begin{itemize}
            \item If $p\in [1, \infty)$, then
            $$H^p(\mathbb{D}):=\left\{\varphi\in H(\mathbb{D}):\sup_{0<r<1}\frac{1}{2\pi}\int_0^{2\pi}|\varphi(re^{i\theta})|^p\,d\theta<\infty\right\},$$
            endowed with the norm 
            $$\|\varphi\|_{p}:=\left(\sup_{0<r<1}\frac{1}{2\pi}\int_0^{2\pi}|\varphi(re^{i\theta})|^p\,d\theta\right)^{1/p} \quad \text{for }\varphi \in H^p(\mathbb{D}).$$
            \item If $p=\infty$, then
            $$H^\infty(\mathbb{D}):=\left\{\varphi\in H(\mathbb{D}):\sup_{z\in\mathbb{D}}|\varphi(z)|<\infty\right\},$$
            endowed with the supremum norm 
            $$\|\varphi\|_{\infty}:=\sup_{z\in\mathbb{D}}|\varphi(z)|\quad \text{for } \varphi \in H^{\infty}(\mathbb{D}).$$
        \end{itemize}
    \end{definition} 
    \begin{corollary}\label{corhard}
 Let $w\in A(\mathbb{D})$ be such that $\inf_{z \in \overline{\mathbb{D}}}|w(z)|>0$. Consider $p\in [1, \infty]$ and define $\omega :=w|_\mathbb{D}$. Then the multiplication operator
 $$M_\omega : \varphi \in H^p(\mathbb{D})\mapsto \varphi\cdot \omega \in H^p(\mathbb{D})$$
 is an invertible operator, and the following assertions are equivalent:
 \begin{itemize}
     \item[\rm (a)] $M_\omega $ has the shadowing property;
     \item[\rm (b)] $|w(z)|\neq 1$ for all $z\in \overline{\mathbb{D}}$.
 \end{itemize}
     \end{corollary}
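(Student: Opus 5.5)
We obtain invertibility directly, prove (b)$\Rightarrow$(a) via Theorem \ref{theo1}, and prove (a)$\Rightarrow$(b) by contraposition using point evaluations, which are eigenvectors of $M_\omega^*$.

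\emph{Invertibility.} Since $w\in A(\mathbb{D})$ and $\inf_{\overline{\mathbb{D}}}|w|>0$, the function $1/w$ also belongs to $A(\mathbb{D})$. Both $\omega$ and $1/\omega$ are bounded holomorphic on $\mathbb{D}$. Multiplication by a bounded holomorphic function is bounded on $H^p(\mathbb{D})$ with norm at most its sup norm, so $M_\omega$ is invertible with $M_\omega^{-1}=M_{1/\omega}$. Moreover $M_\omega^n=M_{\omega^n}$ for all $n\in\Z$.

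\emph{(b)$\Rightarrow$(a).} The set $\overline{\mathbb{D}}$ is compact and connected, and $|w|$ is continuous and never equal to $1$. Hence either $\max_{\overline{\mathbb{D}}}|w|<1$ or $\min_{\overline{\mathbb{D}}}|w|>1$. Apply Theorem \ref{theo1} with $\Omega=\mathbb{D}$, $f=\mathrm{id}$ and weight $\omega$, so that $w^{[n]}=\omega^n$. Condition (A1) was just shown. For (A2), the identity $f=\mathrm{id}$ gives $f^{-n}(\operatorname{supp}\varphi)=\operatorname{supp}\varphi$, which equals $\mathbb{D}$ for $\varphi\ne0$ holomorphic; so (A2) reads $\|\omega^n\varphi\|_p\le\|\omega^n\|_\infty\|\varphi\|_p$, which holds. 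For (A3), take $\rho_-\equiv1$ and $\rho_+\equiv0$. In the first case condition (A) holds, since $\|\omega^n\|_\infty^{1/n}\le\max|w|<1$. In the second case condition (B) holds, since $\|\omega^{-n}\|_\infty^{1/n}\le 1/\min|w|<1$. In either case $M_\omega$ has the shadowing property.

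\emph{(a)$\Rightarrow$(b).} Suppose $|w(z_0)|=1$ for some $z_0\in\overline{\mathbb{D}}$, and assume $M_\omega$ has shadowing. Take $\varepsilon=1$ and let $\delta>0$ be given by shadowing.

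1. Choose points $z\in\mathbb{D}$ with $\lambda:=\omega(z)$ arbitrarily close to the unit circle, using continuity of $w$ at $z_0$. Fix such a $z$ (to be tuned below) and set $\Pi:=\delta_z/\|\delta_z\|$. Point evaluation $\delta_z$ is a nonzero bounded functional on $H^p(\mathbb{D})$, so $\|\Pi\|=1$. It satisfies $\Pi\circ M_\omega=\lambda\Pi$.

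2. Pick $h\in H^p(\mathbb{D})$ with $\Pi h=1$ and $\|h\|_p\le2$, which exists since $\|\Pi\|=1$.

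3. Fix $N\in\N$ with $N\delta/4>3$. Build the sequence $x_n:=0$ for $n\le0$, $x_{n+1}:=M_\omega x_n+\tfrac{\delta}{2}\lambda^{n+1}h$ for $0\le n<N$, and $x_{n+1}:=M_\omega x_n$ for $n\ge N$. Each increment has norm at most $\delta$, so this is a $\delta$-pseudotrajectory. Its projections satisfy $\Pi x_n=n\tfrac{\delta}{2}\lambda^{n}$ for $0\le n\le N$.

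4. Let $y$ be a shadowing orbit. Then for $0\le n\le N$,
\[
|\lambda^n\Pi y-n\tfrac{\delta}{2}\lambda^n|=|\Pi(M_\omega^n y-x_n)|<1.
\]
At $n=0$ this gives $|\Pi y|<1$. At $n=N$ it gives $|\Pi y-N\delta/2|<|\lambda|^{-N}$.

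5. If $|\lambda|\ge1$, then $|\lambda|^{-N}\le1$. If $|\lambda|<1$, choose $z$ so that $|\lambda|^{-N}\le2$, which is possible because $N$ depends only on $\delta$. In both cases $N\delta/2<|\Pi y|+2<3$, contradicting the choice of $N$.

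So shadowing fails and (a)$\Rightarrow$(b) follows. In effect this is the factor argument of Lemma \ref{LL2} made quantitative, so that it works even when the unimodular value is attained only on $\partial\mathbb{D}$.

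The main obstacle is exactly that boundary case. There $\lambda$ is not a genuine eigenvalue of $M_\omega^*$ on the circle, so Lemma \ref{LL2} cannot be applied directly. This is why the proof orders the quantifiers as above: first fix $\delta$ from shadowing, then $N$ from $\delta$, and only then choose $z$ close enough to $z_0$.
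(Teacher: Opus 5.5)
Your invertibility argument and your proof of (b)$\Rightarrow$(a) agree with the paper's, which applies Theorem~\ref{theo1} with $f=\mathrm{id}$, $\rho_-\equiv 1$, $\rho_+\equiv 0$. The gap is in Step~3 of (a)$\Rightarrow$(b). The claim that each increment has norm at most $\delta$ holds only when $|\lambda|\le 1$. Since $\|h\|_p\ge|\Pi h|=1$, the increment $\tfrac{\delta}{2}\lambda^{n+1}h$ has norm at least $\tfrac{\delta}{2}|\lambda|^{n+1}$. This exceeds $\delta$ for $n+1$ close to $N$ whenever $|\lambda|^N>2$, and in Step~5 you explicitly leave $z$ untuned when $|\lambda|\ge 1$. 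This case cannot be avoided when the unimodular value is attained only on $\partial\mathbb{D}$. For $w(z)=(3-z)/2$ one has $\inf_{\overline{\mathbb{D}}}|w|=1$ and $|w(1)|=1$, yet $|\omega(z)|\ge (3-|z|)/2>1$ for every $z\in\mathbb{D}$. So every choice of $z$ gives $|\lambda|>1$, and without tuning your sequence is not a $\delta$-pseudotrajectory.

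The repair uses exactly your quantifier ordering. After fixing $\delta$ and then $N$, use continuity of $w$ at $z_0$ to choose $z\in\mathbb{D}$ with both $|\lambda|^{N}\le 2$ and $|\lambda|^{-N}\le 2$. Then use the increments $\tfrac{\delta}{4}\lambda^{n+1}h$, whose norms are at most $\tfrac{\delta}{4}\cdot 2\cdot 2=\delta$. This gives $\Pi x_n=n\tfrac{\delta}{4}\lambda^n$ for $0\le n\le N$, and Step~4 yields $N\delta/4<|\Pi y|+|\lambda|^{-N}<3$, contradicting your choice $N\delta/4>3$.

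With this fix your proof is correct, but it takes a different route from the paper's. The paper splits into two cases. For $z_0\in\mathbb{D}$ it evaluates at $z_0$ itself, using constant perturbations $w(z_0)^{n-1}\delta$ of norm exactly $\delta$. For $\alpha\in\mathbb{T}$ it takes $z_{j_0}$ with $|w(\alpha)-w(z_{j_0})|<\delta/4$ and builds a two-sided pseudotrajectory whose projection is exactly $2w(\alpha)^n$. It then needs $|w(z_{j_0})|\neq 1$ (from the interior case) and both time directions to force $a_n\to 2$ or $a_{-n}\to 2$. Because its perturbations are driven by the unimodular number $w(\alpha)$, they never grow, so it needs no control on $|\lambda|^{\pm N}$. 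Your unified argument avoids the case split and uses only the segment $0\le n\le N$, so it refutes finite shadowing directly. The cost is the extra two-sided tuning of $z$.
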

     \begin{proof}
        Fix $p\in [1, \infty]$. Simple computations show that $M_\omega $ is an invertible operator. We now prove the equivalence.

((b)$\Rightarrow$(a)): Let $\text{Id}: z \in \mathbb{D}\mapsto z \in \mathbb{D}$ be the identity map. We see that $(\mathbb{D}, \text{Id},\omega )$ is an invertible system and $\left( H^p(\mathbb{D}) , \left\| \cdot  \right\|_p\right)$ satisfies conditions (A1)-(A3) of Theorem \ref{theo1}. Indeed, since the weighted composition operator $C_{\omega ,\text{Id}}$ coincides with the multiplication operator $M_\omega $, which is an invertible operator, condition (A1) is satisfied. Simple calculations show that condition (A2) holds. For condition (A3), it suffices to take $\rho_-\equiv 1$ and $\rho_+\equiv 0$. Therefore, to establish (a), it suffices to show that either (A) or (B) of Theorem \ref{theo1} holds. Since $|w(z)|\neq 1$ for all $z\in \overline{\mathbb{D}}$, by the connectedness and compactness of $\overline{\mathbb{D}}$ we obtain that there exists $\lambda\in(0,1)$ such that either $|w(z)|<1-\lambda$ for every $z\in\overline{\mathbb{D}}$, or $|w(z)|>1+\lambda$ for every $z\in\overline{\mathbb{D}}$. Thus,
$$\lim_{n\to \infty}\left\| \omega ^{[n]} \right\|_{\infty}^{\frac{1}{n}}=\lim_{n\to \infty}\left\| \omega ^{n} \right\|_{\infty}^{\frac{1}{n}}\le 1-\lambda<1\quad \text{or}\quad\lim_{n\to \infty}\left\| \omega ^{[-n]} \right\|_{\infty}^{\frac{1}{n}}=\lim_{n\to \infty}\left\| \frac{1}{\omega ^{n}} \right\|_{\infty}^{\frac{1}{n}}\le \frac{1}{1+\lambda}<1.$$
Therefore, by Theorem \ref{theo1}, we conclude (a).

         ((a)$\Rightarrow$(b)): Suppose that there exists $z_0\in \mathbb{D}$ such that $|w(z_0)|=1$. It is well known that the functional $F_{z}: \varphi \in H^p(\mathbb{D})\mapsto \varphi(z) \in \mathbb{C}$ is continuous for all $z\in \D$. Thus, there is $C>0$ such that 
         \begin{equation}\label{hardy1}
             \left| \varphi(z_0) \right|\le C \left\| \varphi \right\|_p \quad \text{for all }\varphi\in H^p(\D).
         \end{equation}
         Take $\varepsilon=1$ and let $\delta>0$ be associated with this $\varepsilon$ according to the definition of shadowing. Consider the $\delta$-pseudotrajectory $(\varphi_n)_{n\in \Z}$ given by
         $$\begin{cases}
    \varphi_{-n}\equiv 0, & \text{for }n\in \mathbb{N}_0;  \\
    \varphi_{n}=\omega \cdot \varphi_{n-1}+w(z_0)^{n-1}\delta, & \text{for }n\in \mathbb{N}.
\end{cases}$$
Hence, by the shadowing property, there exists $\psi\in H^p(\mathbb{D})$ such that
\begin{equation}\label{hardy2}
    \left\| \varphi_n - M_\omega ^n(\psi) \right\|_p<1\quad \text{for all } n\in \Z.
\end{equation}
By (\ref{hardy1}) and (\ref{hardy2}), we have 
         \begin{align*}
|\varphi_n(z_0)|&\le|\varphi_n(z_0)-M_\omega ^n(\psi)(z_0)|+|M_\omega ^n(\psi)(z_0)-\varphi_0(z_0)|\\
&=|\varphi_n(z_0)-M_\omega ^n(\psi)(z_0)|+|\varphi_0(z_0)-\psi(z_0)|\\
&\le C\left\| \varphi_n - M_\omega ^n(\psi) \right\|_p + C\left\| \varphi_0 - \psi \right\|_p\\
&\le 2C,
\end{align*}
for all $n\in \N$. On the other hand, take $N\in \N$ such that $N\delta> 2C$. Thus, 
$$|\varphi_N(z_0)|=\left|N\delta w^{N-1}(z_0) \right|>2C,$$
which is a contradiction. Then $|w(z)|\ne 1$ for all $z\in \D$. Now, suppose that there exists $\alpha \in \T$ with $|w(\alpha)|=1$. Then there is a sequence $(z_n)_{n\in \N}\subset \D$ such that $z_n\to \alpha$. Consider the same $\delta>0$ of the first part of this proof and take $j_0\in \N$ satisfying $ \left| w(\alpha) - w(z_{j_0}) \right|<\frac{\delta}{4}.$ Define $F:=\frac{F_{z_{j_0}}}{\left\| F_{z_{j_0}} \right\|}$. Thus, there exists $\phi\in H^p(\D)$ with $\left\| \phi \right\|_p\le 2$ and $F(\phi)=1$. Note that 
\begin{equation}\label{hardy3}
             \left| F(\varphi) \right|\le  \left\| \varphi \right\|_p \quad \text{for all }\varphi\in H^p(\D).
         \end{equation}
Consider the $\delta$-pseudotrajectory $(\phi_n)_{n\in \Z}$ given by
\[
\begin{cases}
\phi_0:=2\phi,\\
\phi_n:=M_\omega(\phi_{n-1})+2w(\alpha)^{n-1}\big(w(\alpha)-w(z_{j_0})\big)\phi, & \text{for } n\in\mathbb{N},\\
\phi_{-n}:=M_\omega^{-1}\left(\phi_{-n+1}-2w(\alpha)^{-n}\big(w(\alpha)-w(z_{j_0})\big)\phi\right), & \text{for } n\in\mathbb{N}.
\end{cases}
\]
Hence, by the shadowing property, there exists $\chi\in H^p(\mathbb{D})$ such that
\begin{equation}\label{hardy4}
    \left\| \phi_n - M_\omega ^n(\chi) \right\|_p<1\quad \text{for all } n\in \Z.
\end{equation}
Using the fact that $F(\phi_n)=2w(\alpha)^{n}$ for all $n\in \Z$, by (\ref{hardy3}) and (\ref{hardy4}) we obtain that 
$$a_n:=\left| 2w^{n}(\alpha)- w^{n}(z_{j_0})F(\chi) \right|<1\quad \text{for all }n\in\Z.$$
By the first part of this proof, we know that $ |w(z_{j_0})|\ne 1$. If $ |w(z_{j_0})|< 1$, then $a_n\to 2$, which is a contradiction. If $ |w(z_{j_0})|>1$, then $a_{-n}\to 2$, which is another contradiction. Hence, $|w(\alpha)|\ne 1$ and, therefore, we conclude (b).
       \end{proof}
       \begin{remark}
       Since items (A) and (B) from Theorem \ref{theo1} imply that $C_{w,f}$ is hyperbolic, as observed in Remark \ref{rmk1}, by the proof of Corollary \ref{corhard} we actually establish the following equivalence:
$$\text{(a)}\Longleftrightarrow \text{(b)}\Longleftrightarrow M_{\omega}\text{ is hyperbolic}.$$
               \end{remark}
       When $p=\infty$, the previous corollary can be generalized to the spaces $H^{\infty}(\Omega)$, where $\Omega\subset\mathbb{C}$ is a non-empty, open, and relatively compact set, and $H^{\infty}(\Omega)$ denotes the space of all bounded holomorphic functions $\varphi:\Omega\to\mathbb{C}$. It is a Banach space when endowed with the supremum norm.
       \begin{corollary}\label{corHin}
           Let $\Omega\subset \C$ be a non-empty, open and relatively compact set. Let $w\in A(\Omega)$ be such that $\inf_{z \in \overline{\Omega}}|w(z)|>0$. Define $\omega :=w|_\Omega$. Then the multiplication operator
 $$M_\omega : \varphi \in H^{\infty}(\Omega)\mapsto \varphi\cdot \omega \in H^{\infty}(\Omega)$$
 is an invertible operator, and the following assertions are equivalent:
 \begin{itemize}
     \item[\rm (a)] $M_\omega $ has the shadowing property;
     \item[\rm (b)] $|w(z)|\neq 1$ for all $z\in \overline{\Omega}$.
 \end{itemize}
 If, in addition, $\Omega$ is connected, then $M_\omega$ has the shadowing property if and only if it is hyperbolic.
       \end{corollary}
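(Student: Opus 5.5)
The plan is to follow the proof of Corollary \ref{corhard}, with $\mathbb{D}$ replaced by $\Omega$. Invertibility of $M_\omega$ is immediate: $|w|$ is bounded below on $\overline{\Omega}$, so $1/\omega$ is a bounded holomorphic function and $M_\omega^{-1}=M_{1/\omega}$.

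\textbf{Step 1: (b)$\Rightarrow$(a).} We regard $M_\omega$ as $C_{\omega,\mathrm{Id}}$ on $X=H^\infty(\Omega)$ and apply Theorem \ref{theo1}.
\begin{itemize}
\item Condition (A1) holds because $M_\omega$ is invertible.
\item Condition (A2) follows from $\|\varphi\omega^n\|_\infty\le \|\omega^n\|_{\infty,\operatorname{supp}\varphi}\|\varphi\|_\infty$ for all $n\in\Z$.
\item Condition (A3) holds with $\rho_-\equiv1$ and $\rho_+\equiv0$.
\end{itemize}
Here we cannot use connectedness of $\overline{\mathbb{D}}$. The set $\overline{\Omega}$ is compact but possibly disconnected, so $|w|\neq1$ on it only gives $\overline{\Omega}=K_1\sqcup K_2$ with $K_1,K_2$ compact (possibly one of them empty), $|w|\le 1-\lambda$ on $K_1$ and $|w|\ge1+\lambda$ on $K_2$. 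Since $K_1$ and $K_2$ are disjoint and clopen in $\overline{\Omega}$, the sets $\Omega_i:=K_i\cap\Omega$ are relatively clopen in $\Omega$. Therefore $\rho_-:=\chi_{\Omega_1}$ and $\rho_+:=\chi_{\Omega_2}$ are holomorphic (locally constant), satisfy $\rho_-+\rho_+\equiv1$, and multiplication by them does not increase the sup norm, so (A3) holds for this choice as well. Condition (C) of Theorem \ref{theo1} then holds: with $f=\mathrm{Id}$,
\[
\|\omega^n\|_{\infty,\operatorname{supp}\rho_-}\le(1-\lambda)^n
\quad\text{and}\quad
\|\omega^{-n}\|_{\infty,\operatorname{supp}\rho_+}\le(1+\lambda)^{-n}.
\]
Supports taken in $\Omega$ are contained in $\overline{\Omega_i}\subset K_i$, so these bounds hold. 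Theorem \ref{theo1} therefore gives shadowing.

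\textbf{Step 2: (a)$\Rightarrow$(b).} We repeat the two arguments of Corollary \ref{corhard}, using only that point evaluations $F_z$ are norm-one continuous functionals on $H^\infty(\Omega)$.
\begin{itemize}
\item If $|w(z_0)|=1$ for some $z_0\in\Omega$, the pseudotrajectory $\varphi_n=\omega\varphi_{n-1}+w(z_0)^{n-1}\delta$ (with $\varphi_{-n}\equiv0$) gives $|\varphi_N(z_0)|=N\delta$. This is unbounded, contradicting $|\varphi_n(z_0)|\le 2$, which follows from shadowing.
\item If $|w(\alpha)|=1$ for some $\alpha\in\partial\Omega$, we choose $z_{j_0}\in\Omega$ close to $\alpha$ with $|w(\alpha)-w(z_{j_0})|<\delta/4$. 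We run the same perturbed pseudotrajectory with $\phi\equiv1$ (so $F(\phi)=1$ and $\|\phi\|_\infty=1$). This gives $|2w(\alpha)^n-w(z_{j_0})^nF(\chi)|<1$ for all $n\in\Z$. By the previous item $|w(z_{j_0})|\neq1$, and letting $n\to\pm\infty$ yields a contradiction.
\end{itemize}

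\textbf{Step 3: the final claim.} Assume $\Omega$ is connected. Then $\overline{\Omega}$ is connected, so under (b) either $K_1$ or $K_2$ is empty, and condition (A) or (B) of Theorem \ref{theo1} holds. By Remark \ref{rmk1}, $M_\omega$ is then hyperbolic. Conversely, hyperbolic implies generalized hyperbolic, which implies shadowing by Lemma \ref{LL1}.

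The main obstacle is Step 1 in the disconnected case. The key point is that the clopen splitting of $\overline{\Omega}$ yields \emph{holomorphic} cutoff functions $\rho_\pm$, which is what allows condition (C) of Theorem \ref{theo1} to be used.
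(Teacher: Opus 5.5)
Your proof is correct and follows the paper's argument. The paper likewise splits $\Omega=A\sqcup B$ with $A=\{|w|<1\}$ and $B=\{|w|>1\}$ (your $\Omega_1,\Omega_2$), takes $\rho_-,\rho_+$ to be the indicator functions of these relatively clopen sets in order to verify condition (C) of Theorem \ref{theo1}, reuses the $H^p(\mathbb{D})$ argument for (a)$\Rightarrow$(b), and uses connectedness to force $A=\Omega$ or $B=\Omega$. Your choice $\phi\equiv 1$ in the boundary case and your explicit argument that hyperbolicity implies shadowing (via generalized hyperbolicity and Lemma \ref{LL1}), which the paper leaves implicit, are harmless refinements.
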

       \begin{proof}
           Simple computations show that $M_\omega $ is an invertible operator. We now prove the equivalence.

         ((a)$\Rightarrow$(b)): This implication follows by using the same arguments as in the proof of the implication ((a)$\Rightarrow$(b)) in Corollary~\ref{corhard}.

         ((b)$\Rightarrow$(a)): By hypothesis, we have that $\Omega=A\sqcup B$, where $A=\left\{ z\in \Omega : |w(z)|<1 \right\}$ and $B=\left\{ z\in \Omega : |w(z)|>1 \right\}$. It is easy to see that $(\Omega, \text{Id},\omega )$ is an invertible system. Moreover, taking 
$$\rho_-(z):=\begin{cases}
1, & \text{if } z\in A,\\
0, & \text{if } z\in B,
\end{cases}\quad \text{and} \quad \rho_+(z):=\begin{cases}
0, & \text{if } z\in A,\\
1, & \text{if } z\in B
\end{cases}$$         
         we see that $\left( H^{\infty}(\Omega) , \left\| \cdot  \right\|_{\infty} \right)$ satisfies conditions (A1)-(A3) of Theorem \ref{theo1}. Since $|w(z)|\neq 1$ for all $z\in \overline{\Omega}$, by the compactness of $\overline{\Omega}$ we obtain that there exists $\lambda\in(0,1)$ such that $|w(z)|<1-\lambda$ for every $z\in A$, and $|w(z)|>1+\lambda$ for every $z\in B$. Thus,
$$\lim_{n\to \infty}\left\| \omega ^{[n]} \right\|_{\infty,\,\operatorname{Id}^{-n}(\operatorname{supp}(\rho_-))}^{\frac{1}{n}}=\lim_{n\to \infty}\left\| \omega ^{n} \right\|_{\infty,\,A}^{\frac{1}{n}}\le 1-\lambda<1$$
and 
$$\lim_{n\to \infty}\left\| \omega ^{[-n]} \right\|_{\infty,\, \operatorname{Id}^n(\operatorname{supp}(\rho_+))}^{\frac{1}{n}}=\lim_{n\to \infty}\left\| \frac{1}{\omega ^{n}} \right\|_{\infty,\, B}^{\frac{1}{n}}\le \frac{1}{1+\lambda}<1.$$
Therefore, by Theorem \ref{theo1}, we conclude (a). For the last assertion, it suffices to observe that the connectedness of $\Omega$ guarantees that either $A=\Omega$ or $B=\Omega$.
       \end{proof}
       \begin{remark}
           Using analogous hypotheses on the weight function and the same arguments as in the proof of Corollary \ref{corHin}, we obtain the same characterization as in Corollary \ref{corHin} for multiplication operators acting on spaces of the form $C_b(\Theta)$, where $\Theta$ is a non-empty, locally compact and relatively compact subspace of a Hausdorff space $\Phi$.
       \end{remark}
           By specializing Theorem \ref{theo1} to the setting of translations, described in Example \ref{exx1}, we obtain the following corollary.
    \begin{corollary}\label{cooor1}
        Consider $X:=C_0(\R)$ or $X:=C_b(\R)$. Let $w:\R\to\R$ be a continuous function that satisfies
        \begin{equation}\label{trans1}
            0 < \inf_{x \in \R} |w(x)| \le \sup_{x \in \R}|w(x)|< \infty.
        \end{equation}
        If one of the following conditions holds:  
        \begin{itemize}
            \item[\rm (a)] $\lim_{n\to \infty}\sup_{x\in \R}|w(x)\cdots w(x+n-1)|^{\frac{1}{n}}<1;$
            \item[\rm (b)] $\lim_{n\to \infty}\inf_{x\in \R}|w(x)\cdots w(x+n-1)|^{\frac{1}{n}}>1;$
            \item[\rm (c)] $\lim_{n\to \infty}\sup_{x\in (- \infty,-n+1]}|w(x)\cdots w(x+n-1)|^{\frac{1}{n}}<1$ and \\ 
            $\lim_{n\to \infty}\inf_{x\in [n, \infty)}|w(x-n)\cdots w(x-1)|^{\frac{1}{n}}>1$,
        \end{itemize}
        then the bilateral weighted translation operator $T_w: X\to X$ has the shadowing property.
    \end{corollary}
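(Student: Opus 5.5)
Corollary \ref{cooor1} will follow by verifying the hypotheses (A1)--(A3) of Theorem \ref{theo1} for $X=C_0(\R)$ or $C_b(\R)$ with $f(x)=x+1$, and then translating (a), (b), (c) into (A), (B), (C).

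\emph{Hypotheses (A1)--(A3).}
\begin{itemize}
\item[(A1)] By (\ref{trans1}), $(\R,f,w)$ is an invertible system. $T_w$ maps $X$ into $X$, and its inverse $\varphi\mapsto \varphi(\cdot-1)/w(\cdot-1)$ also maps $X$ into $X$ because $1/w$ is bounded and continuous. So (A1) holds; this is also contained in Example \ref{exx1}.
\item[(A2)] Iterating gives $T_w^n\varphi=\varphi(\cdot+n)\,w^{[n]}$ for all $n\in\Z$. At points $x$ with $x+n\notin\supp\varphi$ this vanishes. Hence
$\|T_w^n\varphi\|_\infty\le \|w^{[n]}\|_{\infty,f^{-n}(\supp\varphi)}\|\varphi\|_\infty$.
\item[(A3)] With $W=[0,1)$ we have $\Omega^0_-=(-\infty,0]$ and $\Omega^0_+=[0,\infty)$. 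Take continuous $\rho_\pm$ with values in $[0,1]$ and $\rho_-+\rho_+\equiv1$: set $\rho_-=1$ on $(-\infty,-1]$, $\rho_-=0$ on $[0,\infty)$, linear in between. Multiplication by $\rho_\pm$ preserves $C_0(\R)$ and $C_b(\R)$ and does not increase the sup norm.
\end{itemize}
With this choice, $\supp\rho_-=(-\infty,0]$ and $\supp\rho_+=[-1,\infty)$. I will keep in mind that the supports may need to be shifted by a bounded amount, using $\rho_\pm$ with transition interval $[-1,0]$ or $[0,1]$.

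\emph{Translating the conditions.}
\begin{itemize}
\item For (a)$\Rightarrow$(A): $w^{[n]}(x)=w(x)\cdots w(x+n-1)$, so (a) is exactly (A).
\item For (b)$\Rightarrow$(B): $w^{[-n]}(x)=1/\bigl(w(x-1)\cdots w(x-n)\bigr)$, so $\|w^{[-n]}\|_\infty=1/\inf_x|w(x-n)\cdots w(x-1)|$, and (b) gives (B).
\item For (c)$\Rightarrow$(C), first part: $f^{-n}(\supp\rho_-)=(-\infty,-n]$, which is contained in $(-\infty,-n+1]$, so the first half of (c) yields the first half of (C).
\item For (c)$\Rightarrow$(C), second part: $f^{n}(\supp\rho_+)=[n-1,\infty)$, whereas (c) controls $w^{[-n]}$ only on $[n,\infty)$.
\end{itemize}

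\emph{Main obstacle: the off-by-one in the second part.} I would handle it in one of two ways. The cleanest is to choose $\rho_+$ supported in $[0,\infty)$, i.e.\ transition on $[0,1]$ with $\rho_-=1$ on $(-\infty,0]$. Then $\supp\rho_-=(-\infty,1]$, so $f^{-n}(\supp\rho_-)=(-\infty,-n+1]$, which matches (c) exactly. Also $f^n(\supp\rho_+)=[n,\infty)$, which matches the second half of (c).

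So the transition interval $[0,1]$ makes both supports line up with the intervals in (c), and no bounded-distortion estimate is needed. The limits in (C) exist or are at most the $\limsup$. In any case, Theorem \ref{theo1} only uses the resulting exponential bounds $Ct^n$, which follow from the hypotheses in (c) via the facts listed in the preliminaries. Theorem \ref{theo1} then gives the shadowing property of $T_w$ in each case.
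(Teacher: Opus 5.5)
Your proposal is correct and, after you switch to the transition interval $[0,1]$, it is the paper's proof. It uses the same $\rho_-$ (equal to $1$ on $(-\infty,0]$, $1-x$ on $[0,1]$, $0$ on $[1,\infty)$) and $\rho_+=1-\rho_-$, the same identities $f^{-n}(\supp\rho_-)=(-\infty,-n+1]$ and $f^{n}(\supp\rho_+)=[n,\infty)$, and the same direct appeal to Theorem~\ref{theo1} through $w^{[n]}(x)=w(x)\cdots w(x+n-1)$ and $w^{[-n]}(x)=1/(w(x-n)\cdots w(x-1))$.

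One harmless slip: with $W=[0,1)$ one has $\Omega^0_-=(-\infty,1]$, not $(-\infty,0]$. This plays no role, because (A3) of Theorem~\ref{theo1} imposes no condition on the supports of $\rho_\pm$.
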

    \begin{proof}
        Let $f: x\in \R \to x+1\in \R$. We see that $(\R, f,w)$ is an invertible system and $\left( X , \left\| \cdot  \right\|_\infty\right)$ satisfies conditions (A1)-(A3) of Theorem \ref{theo1}. Indeed, since the weighted composition operator $C_{w,f}$ coincides with the bilateral weighted translation operator $T_w$, which is an invertible operator by (\ref{trans1}), condition (A1) is satisfied. Simple calculations show that condition (A2) holds. For condition (A3), take 
        $$\rho_-(x):=
\begin{cases}
1, & \text{if } x\le 0,\\
1-x, & \text{if } x\in[0,1],\\
0, &\text{if }x>1,
\end{cases}\quad \text{and}\quad \rho_+(x):=
\begin{cases}
0, & \text{if } x\le 0,\\
x, & \text{if } x\in[0,1],\\
1, &\text{if }x>1.
\end{cases}$$
     Note that 
     $$f^{-n}(\operatorname{supp}(\rho_-))= (-\infty, -n+1]\quad \text{and} \quad f^{n}(\operatorname{supp}(\rho_+)) = [n, \infty)$$
     for each $n\in \N_0$. Hence, by Theorem \ref{theo1} and by the fact that 
        $$\left\| w^{[n]} \right\|_{\infty,\, A}= \sup_{x\in A}|w(x)\cdots w(x+n-1)|\quad \text{and}\quad \left\| w^{[-n]} \right\|_{\infty,\, A}= \sup_{x\in A}\frac{1}{|w(x-n)\cdots w(x-1)|}$$
        for all $A\subset \R$ and $n\in\N$, the conclusion follows.
    \end{proof}
    \begin{example}\label{exx5}
      Let $X:=C_0(\R)$ or $X:=C_b(\R)$. Consider the bilateral weighted translation operator $T_w: X\to X $ where 
        \[
w(x)=
\begin{cases}
\dfrac12, & x\le 0,\\[2mm]
\dfrac12+\dfrac32x, & 0\le x\le1,\\[2mm]
2, & x\ge1.
\end{cases}
\]    
Fix $n\in \N$ and take $x \in(-\infty,-n+1]$. Hence, $|w(x)\cdots w(x+n-1)|=\frac{1}{2^n}$. Thus, 
     \begin{equation}\label{eqc1}
         \sup_{x\in (-\infty,-n+1]}|w(x)\cdots w(x+n-1)|^{\frac{1}{n}}=\frac{1}{2}.
     \end{equation}
     Now, consider $x\in [n,\infty)$. Then $\frac{1}{2^n}\le\frac{1}{|w(x-n)\cdots w(x-1)|} \le \frac{1}{2^{n-2}}$. Hence 
     \begin{equation}\label{eqc2}
         2\ge\inf_{x\in [n, \infty)}|w(x-n)\cdots w(x-1)|^{\frac{1}{n}}\ge 2^{\frac{n-2}{n}}.
     \end{equation}
     Thus, by (\ref{eqc1}) and (\ref{eqc2}), we obtain condition (c) of Corollary \ref{cooor1}. Therefore, $T_w$ has the shadowing property.
    \end{example}
    \begin{remark}
In the context of the last example, the sum decomposition $X=M+N$ from Remark \ref{rmk1} given by the spaces 
$$M:=\left\{ \varphi\rho_-: \varphi \in X \right\}\quad \text{and}\quad N:=\left\{ \varphi\rho_+: \varphi \in X \right\},$$
is not direct and, hence, does not satisfy Definition \ref{defgen}. This does not imply that $T_w$ is not generalized hyperbolic. In fact, it is generalized hyperbolic. It suffices to consider the topological direct sum decomposition $X=M_0\oplus N_0$, where
$$M_0:=\left\{\varphi\in X:\varphi(x)=0\text{ for all }x\geq 1\right\}$$ and $$N_0:=\left\{\varphi\in X:\varphi(x)=0\text{ for all }x\leq 0\text{ and }\varphi(x)=x\varphi(1)\text{ for all }x\in[0,1]\right\}.$$ Note that finding such a decomposition is not immediate, unlike in the case of $L^p$-spaces (see \cite{DAnDarMai21}). This shows that Theorem \ref{theo1}, and in particular Corollary \ref{cooor1}, may provide a much simpler way to establish the shadowing property, since it is enough to verify the explicit formulas in items (a)--(c) of Corollary \ref{cooor1}, or, more generally, those in items (A)--(C) of Theorem \ref{theo1}.
    \end{remark}
In the setting of admissible spaces, and under suitable assumptions on the weight and composition functions, we can establish the converse of Theorem \ref{theo1}, as we shall see below. Recall that, given a metric space $(X,d)$, a map $f:X\to X$ is said to be {\it distance-preserving} if $d(f(x),f(y))=d(x,y)$ for all $x,y\in X$.
    \begin{theorem}\label{MainTheorem}
       Let $(\Omega,f,w)$ be an invertible system, where $\Omega$ is a metric space, $w$ is uniformly continuous, and $f$ is distance-preserving. Let $X$ be an admissible space for $C_{w,f}$, generated by $W$. Then the weighted composition operator $C_{w,f}$ on $X$ has the shadowing property if and only if one of the following conditions holds:
        \begin{itemize}
         \item[\rm (A)] $\lim_{n \to \infty} \left\| w^{[n]} \right\|^{\frac{1}{n}}_{\infty}<1;$ 
        \item[\rm (B)] $\lim_{n \to \infty} \left\| w^{[-n]} \right\|^{\frac{1}{n}}_{\infty} <1;$
        \item[\rm (C)] $\lim_{n \to \infty} \left\| w^{[n]} \right\|_{\infty,\,\Omega^{n}_{-}}^{\frac{1}{n}}<1$ and $\lim_{n \to \infty} \left\| w^{[-n]} \right\|_{\infty,\,\Omega^{n}_{+}}^{\frac{1}{n}}<1.$
        \end{itemize}
    \end{theorem}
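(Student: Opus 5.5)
The proof splits into two directions.

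\emph{Sufficiency ($\Leftarrow$).} I will apply Theorem~\ref{theo1} with $X$ endowed with $\|\cdot\|_\infty$, so conditions (A1)--(A3) must be checked. (A1) is (AS1). For (A2), Remark~\ref{rmkk0}(b) gives $(C_{w,f})^n\varphi=(\varphi\circ f^n)w^{[n]}$, which vanishes off $f^{-n}(\operatorname{supp}\varphi)$; hence
\[
\|(C_{w,f})^n\varphi\|_\infty\le \|w^{[n]}\|_{\infty,\,f^{-n}(\operatorname{supp}\varphi)}\|\varphi\|_\infty .
\]
For (A3), I take the functions $\rho_\pm$ from (AS3). Since $\|\rho_\pm\|_\infty=1$, we get $\|\rho_\pm\varphi\|_\infty\le\|\varphi\|_\infty$. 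Conditions (A) and (B) coincide with those of Theorem~\ref{theo1}. For (C), note that $\operatorname{supp}\rho_-=\Omega^0_-$, so $f^{-n}(\Omega^0_-)=\Omega^n_-$ because $f$ is a homeomorphism permuting the pieces $f^k(W)$. Likewise $f^n(\operatorname{supp}\rho_+)=\Omega^n_+$. Thus (C) here is exactly (C) of Theorem~\ref{theo1}, and this direction follows.

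\emph{Necessity ($\Rightarrow$).} I will use Lemma~\ref{LL2} with, for each $x\in\Omega$, the linear factor map
\[
\Pi_x:X\to S_x,\qquad \varphi\mapsto(\varphi(f^n(x)))_{n\in\Z}.
\]
This map is surjective by definition, and continuous into $\ell^\infty$ since $\|\Pi_x\varphi\|\le\|\varphi\|_\infty$. It satisfies $\Pi_x\circ C_{w,f}=B_{w_x}\circ\Pi_x$, as in the proof of Lemma~\ref{lead1}(a). So every $B_{w_x}$ on $S_x$ has shadowing. By Lemma~\ref{lead1}(a) and Theorem~\ref{Teo1}, for each $x$ one of (I), (II), (III) holds for the weights $w_x$, with $\|e_k\|=1$.

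The difficulty is twofold. First, the alternative and the constants $C,t$ may depend on $x$. Second, the pointwise conditions must be transferred to the suprema over $\Omega$ in (A)--(C).

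To get uniformity, I will instead use the quantitative content of the proof of Theorem~\ref{Teo1}. The $\delta$ there is determined by $\varepsilon=1$ for $C_{w,f}$, and it is inherited by all factors since $\|\Pi_x\|\le1$. Tracking that proof, the dichotomy (i)/(ii) is derived from explicit bounds such as $\sum_{n\le N_0}(\cdots)\le(1+\delta)/\delta^2$ and $\sum(\cdots)<1+1/\delta$. These sums are bounded by constants depending only on $\delta$, and via Lemma~\ref{l1} they yield rates $C=K$, $t=K/(K+1)$ with $K=K(\delta)$ independent of $x$. Thus, for every $x$, each one-sided condition holds with uniform constants.

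Next, I must rule out mixed behaviour across different $x$. A likely route is to exploit continuity. By (AS2) every orbit meets $W$ exactly once, so it suffices to consider $x\in W$. The relevant quantities are $|w^{[n]}(f^k(x))|$, and uniform continuity of $w$ together with distance preservation of $f$ makes $x\mapsto w^{[n]}(f^k(x))$ equicontinuous in $k$ for fixed $n$. Hence the sets of $x\in\overline W$ satisfying each alternative with uniform constants are closed. Connectedness of $W$ and $W\subset\overline{\operatorname{int}W}$ should then force a single alternative on all of $W$.

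Alternatively, one can glue pseudotrajectories supported near two orbits, using (AS5) and bump functions with disjoint supports, to mimic the "(i) and (iii) incompatible" argument directly in $X$. This would show, for instance, that (ii) at one orbit and (i) at another is impossible.

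Once a single alternative holds uniformly for all $x$, I take suprema. Indeed, $\|w^{[n]}\|_\infty=\sup_x\sup_k|w^{[n]}(f^k x)|\le Ct^n$ gives (A). Similarly (II) gives (B), and (III) gives (C), using $\Omega^n_-=\overline{\bigsqcup_{k\ge n}f^{-k}(W)}$ and continuity for the closure.

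I expect this uniformity-and-gluing step to be the main obstacle.
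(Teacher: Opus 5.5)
Your sufficiency direction is correct. It is what the paper means by ``follows directly from Theorem~\ref{theo1}'', and you actually verify (A2), (A3) and the identities $f^{-n}(\Omega^0_-)=\Omega^n_-$ and $f^{n}(\Omega^0_+)=\Omega^n_+$, which the paper leaves implicit. The start of your necessity direction (factor maps $\Pi_x$, Lemma~\ref{LL2}, Lemma~\ref{lead1}, Theorem~\ref{Teo1}) is also the paper's.

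The gap is the uniformity step you flag yourself. Tracking the proof of Theorem~\ref{Teo1} does \emph{not} give constants depending only on $\delta$.
\begin{itemize}
\item[(a)] When (i) fails, that proof picks an index $s$ and a length $N_0$ with $\sum_{n=1}^{N_0}(\cdots)>(1+\delta)/\delta^2$. It then gets the bound $\sum_{k=1}^{N-1}(\cdots)<1+1/\delta$ only for $N>N_0$, i.e.\ only for backward sums ending after the index $s+N_0$.
\item[(b)] Both $s$ and $N_0$ depend on the weight sequence, hence on $x$. So Lemma~\ref{l1} gives a rate $t$ independent of $x$, but a constant $C$ depending on $s+N_0$ and on the weights along the initial stretch, with no bound in $x$.
\item[(c)] Separately, $\|\Pi_x\|\le 1$ only transfers $\varepsilon$. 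Passing $\delta$ to the factor needs lifts of the pseudotrajectory errors with controlled norm. Here these can be obtained from (AS5) and bump functions, but not from $\|\Pi_x\|\le 1$.
\end{itemize}
A bound on the transition length in terms of $\delta$ alone might come from a separate drift argument, but your proposal does not supply one. So your closed sets are not known to cover $\overline W$, and the connectedness step has no valid input. The gluing alternative is only sketched.

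The missing idea is that no uniformity in $x$ is needed before using connectedness, because each pointwise alternative is an \emph{open} condition. Let $\mathcal A,\mathcal B,\mathcal C$ be the sets of $x$ for which (I), (II), (III) hold for $B_{w_x}$.
\begin{itemize}
\item[(1)] By Lemma~\ref{lead1}(b) and submultiplicativity, at $x_0\in\mathcal A$ the condition is witnessed by a single $n_0$ with $\sup_k|w^{[n_0]}(f^kx_0)|^{1/n_0}<\lambda+\varepsilon<1$.
\item[(2)] Since $|w^{[n_0]}|^{1/n_0}$ is uniformly continuous and $f$ is an isometry, the same inequality with $\lambda+2\varepsilon$ holds for every $y$ in a ball around $x_0$ and every $k$.
\item[(3)] Splitting $w^{[mn_0]}$ into $m$ blocks $w^{[n_0]}\circ f^{jn_0}$ bounds $\sup_k|w^{[mn_0]}(f^ky)|^{1/(mn_0)}$ by $\lambda+2\varepsilon$ for every $m$. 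Together with the existence of the limit, this puts the ball inside $\mathcal A$. The same argument works for $\mathcal B$ and $\mathcal C$, using the one-sided suprema for $\mathcal C$.
\item[(4)] Hence $\mathcal A,\mathcal B,\mathcal C$ are open, pairwise disjoint, $f$-invariant and cover $\Omega$. Connectedness of $W$ and (AS2) then give $\Omega=\mathcal A$, $\mathcal B$ or $\mathcal C$.
\end{itemize}
Uniformity comes afterwards from the relative compactness of $W$, which your proposal never uses. Cover $\overline W$ by finitely many such balls and take $n_0=\prod_i n_{x_i}$ and $\gamma_0=\max_i\gamma_{x_i}$. This gives $\|w^{[mn_0]}\|_\infty^{1/(mn_0)}\le\gamma_0$ for all $m$, or the analogous bounds on $\Omega^{mn_0}_-$ and $\Omega^{mn_0+1}_+$ in case $\mathcal C$. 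Finally, (A), (B) or (C) follows from the existence of the limits in Lemma~\ref{lead2}.
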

    \begin{proof}
$(\Rightarrow):$ Suppose that $C_{w,f}$ has the shadowing property. We first prove that, for each $x \in \Omega$, one of the following conditions holds:
\begin{itemize}
    \item[($I_A$)] $\lim_{n \to \infty} \sup_{k \in \mathbb{Z}}  
    \left| w^{[n]}(f^k(x)) \right|^{\frac{1}{n}} < 1;$
    \item[($I_B$)] $\lim_{n \to \infty} \sup_{k \in \mathbb{Z}}  
    \left| w^{[-n]}(f^k(x)) \right|^{\frac{1}{n}} < 1;$
    \item[($I_C$)] $\lim_{n \to \infty} \sup_{k \in \mathbb{N}_0}  
    \left| w^{[n]}(f^{-k-n}(x)) \right|^{\frac{1}{n}} < 1$ and $\lim_{n \to \infty} \sup_{k \in \mathbb{N}}  
    \left| w^{[-n]}(f^{k+n}(x)) \right|^{\frac{1}{n}} < 1.$
\end{itemize}
Fix $x \in \Omega$ and consider the map: 
    $$\begin{matrix}
\Pi: & X & \to & S_x \\
 & \varphi & \mapsto & (\varphi(f^{n}(x)))_{n\in \mathbb{Z}}
\end{matrix}$$
By the definition of $S_x$ we see that $\Pi$ is onto. Moreover, since 
$$\left\| \Pi(\varphi) \right\|_{\infty}=\sup_{n\in\Z}|\varphi(f^{n}(x))|\le \left\| \varphi \right\|_{\infty} \quad \text{for all } \varphi\in X,$$
we have that $\Pi$ is continuous. By Lemma \ref{lead1}, $S_x$ is an admissible Banach sequence space for $B_{w_x}$. Since 
$$\Pi(C_{w,f}(\varphi))=(\varphi(f^{n+1}(x))w(f^n(x)))_{n\in \mathbb{Z}}=B_{w_x}(\Pi(\varphi)) \quad \text{for all }\varphi \in X,$$
by Lemma \ref{LL2} we obtain that $B_{w_x}$ has the shadowing property. Therefore, by applying Theorem \ref{Teo1} to $B_{w_x}:S_x \to S_x$, we obtain that one of the conditions $(I_A)$, $(I_B)$, or $(I_C)$ holds. 

Now, denote by $\mathcal{A}$, $\mathcal{B}$, and $\mathcal{C}$ the sets of points satisfying conditions ($I_A$), ($I_B$) and ($I_C$), respectively. We will prove that
    \begin{equation}\label{prin}
       \Omega=\mathcal{A},\quad \text{or}\quad\Omega=\mathcal{B},\quad\text{or}\quad\Omega=\mathcal{C}.
    \end{equation}
Suppose that $\mathcal{A}\ne \emptyset$ and let $x_0 \in \mathcal{A}$. Then there is $\lambda \in (0,1)$ such that 
$$\lim_{n \to \infty} \sup_{k \in \mathbb{Z}} \left| w^{[n]}(f^k(x_0)) \right|^{\frac{1}{n}}=\lambda.$$ 
Take $n_0 \in \N$ such that 
    \begin{equation}\label{c0}
        \sup_{k \in \mathbb{Z}}  
    \left| w^{[n_0]}(f^k(x_0)) \right|^{\frac{1}{n_0}} < \lambda + \varepsilon,
    \end{equation}
    where $\varepsilon>0$ satisfies $\lambda + 2\varepsilon <1$. Since $f$ is distance-preserving, $w$ is uniformly continuous and
\[
0<\inf_{x\in\Omega}|w(x)|
\leq \sup_{x\in\Omega}|w(x)|<\infty,
\]
it follows that $\left| w^{[n_0]} \right|^{\frac{1}{n_0}}$ is also uniformly continuous. Thus, there exists $\delta>0$ such that 
    \begin{equation} \label{c1}
        \left| \left| w^{[n_0]}(x) \right|^{\frac{1}{n_0}}- \left| w^{[n_0]}(y) \right|^{\frac{1}{n_0}} \right|< \varepsilon \quad  \text{for all } x,y\in \Omega \text{ with } d(x,y)<\delta.
    \end{equation}
    Since $f$ preserves the distance, (\ref{c1}) implies that for every $k\in\mathbb Z$ 
    \begin{equation} \label{c2}
        \left| \left| w^{[n_0]}(f^k(x)) \right|^{\frac{1}{n_0}}- \left| w^{[n_0]}(f^k(y)) \right|^{\frac{1}{n_0}} \right|< \varepsilon \quad  \text{for all } x,y\in \Omega \text{ with } d(x,y)<\delta.
    \end{equation}
    Therefore, if $y \in B(x_0, \delta)$, then, by (\ref{c0}) and (\ref{c2}), we have 
    \begin{equation}\label{ww}
        \sup_{k \in \mathbb{Z}}\left| w^{[mn_0]}(f^k(y)) \right|^{\frac{1}{mn_0}}=\sup_{k \in \mathbb{Z}}\left( \prod_{j=0}^{m-1}\left| w^{[n_0]}(f^{k+jn_0}(y)) \right|^{\frac{1}{n_0}} \right)^{\frac{1}{m}}\le \left( \prod_{j=0}^{m-1}(\lambda + 2\varepsilon)\right)^{\frac{1}{m}} = \lambda + 2\varepsilon
    \end{equation}
for each $m \in \N$. By Lemma \ref{lead1}, we know that the limit $ \lim_{n \to \infty} \sup_{k \in \mathbb{Z}} \left| w^{[n]}(f^k(y)) \right|^{\frac{1}{n}}$ exists for all $y\in\Omega$. Thus, by (\ref{ww}) we obtain 
$$ \lim_{n \to \infty} \sup_{k \in \mathbb{Z}}  
    \left| w^{[n]}(f^k(y)) \right|^{\frac{1}{n}}=  \lim_{n \to \infty} \sup_{k \in \mathbb{Z}}  
    \left| w^{[nn_0]}(f^k(y)) \right|^{\frac{1}{nn_0}}\le \lambda + 2\varepsilon<1$$
for all $y \in B(x_0, \delta)$. Therefore, $B(x_0,\delta)\subset \mathcal{A}$. Since $x_0\in\mathcal{A}$ is arbitrary, we obtain that $\mathcal{A}$ is open.

Now, suppose $\mathcal{B}\ne \emptyset$ and let $ x_0\in \mathcal{B}$. Then there is $\lambda\in(0,1)$ such that 
$$\lim_{n \to \infty} \sup_{k \in \mathbb{Z}} \left| w^{[-n]}(f^k(x_0)) \right|^{\frac{1}{n}}=\lambda.$$
Take $\varepsilon>0$ such that $\lambda+2\varepsilon<1$. As in the previous case, we can find  $\delta> 0$ and $n_0\in \N$, such that 
$$\lim_{n \to \infty} \sup_{k \in \mathbb{Z}}  
    \left| w^{[-n]}(f^k(y)) \right|^{\frac{1}{n}}=  \lim_{n \to \infty} \sup_{k \in \mathbb{Z}}  
    \left| w^{[-nn_0]}(f^k(y)) \right|^{\frac{1}{nn_0}}\le \lambda + 2\varepsilon < 1$$
for all $y \in B(x_0, \delta)$. Therefore, since $x_0\in \mathcal{B}$ is arbitrary, we conclude that $\mathcal{B}$ is open.

Now, suppose that $\mathcal{C}\ne \emptyset$ and let $x_0 \in \mathcal{C}.$ Then there are $\lambda_1, \lambda_2 \in (0,1)$ such that 
$$ \lim_{n \to \infty} \sup_{k \in \mathbb{N}_0}  
    \left| w^{[n]}(f^{-k-n}(x_0)) \right|^{\frac{1}{n}}= \lambda_1\quad \text{and}\quad \lim_{n \to \infty} \sup_{k \in \mathbb{N}}  
    \left| w^{[-n]}(f^{k+n}(x_0)) \right|^{\frac{1}{n}} = \lambda_2.
$$ 
Take $\varepsilon>0$ such that $\lambda_i + 2\varepsilon <1$ for $i=1,2$. Choose $n_0\in \N$ such that 
\begin{equation} \label{d1}
    \sup_{k \in \mathbb{N}_0}  
    \left| w^{[n_0]}(f^{-k-n_0}(x_0)) \right|^{\frac{1}{n_0}}< \lambda_1 +\varepsilon\quad \text{and}\quad  \sup_{k \in \mathbb{N}}  
    \left| w^{[-n_0]}(f^{k+n_0}(x_0)) \right|^{\frac{1}{n_0}} < \lambda_2+ \varepsilon.
\end{equation} 
Arguing as in the first case, we can find $\delta>0$ such that, for all $k\in \Z$,
\begin{equation}\label{dr1}
    \left| \left| w^{[n_0]}(f^{-k-n_0}(x)) \right|^{\frac{1}{n_0}}- \left| w^{[n_0]}(f^{-k-n_0}(y)) \right|^{\frac{1}{n_0}} \right|< \varepsilon\quad \text{for all } x,y\in \Omega \text{ with } d(x,y)<\delta
\end{equation}
and
\begin{equation}\label{dr2}
    \left| \left| w^{[-n_0]}(f^{k+n_0}(x)) \right|^{\frac{1}{n_0}}- \left| w^{[-n_0]}(f^{k+ n_0}(y)) \right|^{\frac{1}{n_0}} \right|< \varepsilon\quad \text{for all } x,y\in \Omega \text{ with } d(x,y)<\delta.
\end{equation}
If $y\in B(x_0,\delta)$, then, by (\ref{d1}), (\ref{dr1}), and (\ref{dr2}), we obtain
\begin{align*}
\sup_{k \in \mathbb{N}_0}\left| w^{[mn_0]}(f^{-k-mn_0}(y)) \right|^{\frac{1}{mn_0}} & =\sup_{k \in \mathbb{N}_0}\left( \prod_{j=0}^{m-1}\left| w^{[n_0]}(f^{-k-mn_0+jn_0}(y)) \right|^{\frac{1}{n_0}} \right)^{\frac{1}{m}}\\
&\le\left( \prod_{j=0}^{m-1}(\lambda_1 + 2\varepsilon)\right)^{\frac{1}{m}} = \lambda_1 + 2\varepsilon<1,\tag{$\star$}
\end{align*}
and
\begin{align*}
\sup_{k \in \mathbb{N}}\left| w^{[-mn_0]}(f^{k+mn_0}(y)) \right|^{\frac{1}{mn_0}} & =\sup_{k \in \mathbb{N}}\left( \prod_{j=0}^{m-1}\left| w^{[-n_0]}(f^{k+mn_0-jn_0}(y)) \right|^{\frac{1}{n_0}} \right)^{\frac{1}{m}}\\
&\le\left( \prod_{j=0}^{m-1}(\lambda_2 + 2\varepsilon)\right)^{\frac{1}{m}} = \lambda_2 + 2\varepsilon<1\tag{$\star \star$}
\end{align*}
for all $m\in \N$. Thus, by Lemma \ref{lead1}, $(\star)$ and $(\star \star)$, we have 
$$ \lim_{n \to \infty} \sup_{k \in \mathbb{N}_0}  
    \left| w^{[n]}(f^{-k-n}(y)) \right|^{\frac{1}{n}}\le \lambda_1+2\varepsilon<1\quad \text{and}\quad \lim_{n \to \infty} \sup_{k \in \mathbb{N}}  
    \left| w^{[-n]}(f^{k+n}(y)) \right|^{\frac{1}{n}} \le \lambda_2+2\varepsilon<1,$$
    for all $y \in B(x_0, \delta)$. Therefore, since $x_0\in \mathcal{C}$ is arbitrary, $\mathcal{C}$ is open. Since $\mathcal{A}$, $\mathcal{B}$, and $\mathcal{C}$ are open disjoint sets, whose union is $\Omega$, and $W$ is connected, it follows that exactly one of the following assertions holds: 
$$ W \subset \mathcal{A}, \quad \text{or} \quad W \subset \mathcal{B}, \quad \text{or} \quad W \subset \mathcal{C}.$$
It is easy to see that $x \in \mathcal{A}$ if and only if $f^k(x) \in \mathcal{A}$ for all $k\in\Z$. Thus, by condition (AS2), if $W \subset \mathcal{A}$, then $\Omega = \mathcal{A}$. Similarly, we see that if $W \subset \mathcal{B}$, then $\Omega = \mathcal{B}$. Now, suppose that $x\in \mathcal{C}$. Thus, denoting by $M$ the supremum of $|w|$, we obtain 
\begin{align*}
\lim_{n \to \infty} \sup_{k \in \mathbb{N}_0}  
    \left| w^{[n]}(f^{-k-n}(f(x))) \right|^{\frac{1}{n}}&=\lim_{n \to \infty} \sup_{k \in \mathbb{N}_0}  
    \left| w(f^{-k}(x))w^{[n-1]}(f^{-k-(n-1)}(x)) \right|^{\frac{1}{n}} \\
&\le \lim_{n \to \infty} \sup_{k \in \mathbb{N}_0}  
    \left|Mw^{[n-1]}(f^{-k-(n-1)}(x)) \right|^{\frac{1}{n}} \\
& = \lim_{n \to \infty}\left(  \sup_{k \in \mathbb{N}_0}  
    \left|w^{[n-1]}(f^{-k-(n-1)}(x)) \right|^{\frac{1}{n-1}} \right)^{\frac{n-1}{n}}<1 
\end{align*}
and 
$$\lim_{n \to \infty} \sup_{k \in \mathbb{N}}  
    \left| w^{[-n]}(f^{k+n}(f(x))) \right|^{\frac{1}{n}}\le \lim_{n \to \infty} \sup_{k \in \mathbb{N}}  
    \left| w^{[-n]}(f^{k+n}(x)) \right|^{\frac{1}{n}} <1.$$
Thus, \(f(x)\in\mathcal C\). Similarly, one can check that \(f^{-1}(x)\in\mathcal C\). Therefore, if \(x\in\mathcal C\), then \(f^j(x)\in\mathcal C\) for all \(j\in\mathbb Z\). Hence, if \(W\subset \mathcal C\), then, by condition (AS2), \(\Omega=\mathcal C\). Thus, we conclude (\ref{prin}). Now, we analyze each case separately:
\begin{itemize}
    \item[1.] \underline{\text{Case }$\Omega = \mathcal{A}$}: We will prove that (A) holds. By (\ref{ww}), we know that for each $x \in \Omega$, there exist $n_x\in \N$, $\delta_x>0$ and $\gamma_x \in (0,1)$ such that
$$ \sup_{k \in \mathbb{Z}}  
    \left| w^{[nn_x]}(f^k(y)) \right|^{\frac{1}{nn_x}}\le \gamma_x<1 \quad  \text{for all }y\in B(x,\delta_x)\text{ and }n\in\N.$$
 By the compactness of $\overline{W}$, there exist $m\in \N$ and $x_1, \cdots, x_m\in \overline{W}$ such that $\overline{W}\subset \bigcup_{i=1}^{m}B(x_i,\delta_{x_i})$. Take $n_0:= \prod_{i=1}^{m}n_{x_i}$ and $\gamma_0:=\max\left\{ \gamma_{x_i}:1\le i\le m \right\}$. Then, for all \(y\in\Omega\), we have
\[
\left|w^{[nn_{0}]}(y)\right|^{\frac{1}{nn_0}}\le \gamma_{0}<1
\qquad \text{for all } n\in\mathbb N.
\]

Thus,
\[
\left\|w^{[nn_{0}]}\right\|_{\infty}^{\frac{1}{nn_0}}\le \gamma_{0}<1
\qquad \text{for all } n\in\mathbb N,
\]
and, therefore, by Lemma \ref{lead2}, condition (A) holds.
\item[2.] \underline{\text{Case }$\Omega = \mathcal{B}$}: Arguing as in Case 1, we obtain that if $\Omega=\mathcal{B}$, then condition (B) holds.
\item[3.] \underline{\text{Case }$\Omega = \mathcal{C}$}: We will prove that condition (C) holds. By $(\star )$ and $(\star \star)$, we know that for each $x \in \Omega$, there exist $n_{x}\in \N$, $\delta_{x}>0$ and $\gamma_{x}\in(0,1)$ such that
$$\max\left\{  \sup_{k \in \mathbb{N}_0}\left| w^{[nn_{x}]}(f^{-k-nn_{x}}(y)) \right|^{\frac{1}{nn_{x}}},\sup_{k \in \mathbb{N}}\left| w^{[-nn_x]}(f^{k+nn_x}(y)) \right|^{\frac{1}{nn_{x}}} \right\}\le \gamma_{x} <1$$
 for all $y \in B(x, \delta_x)$ and $n\in \N$. By the compactness of $\overline{W}$, there exist $m\in \N$ and $x_1, \cdots, x_m\in \overline{W}$ such that $\overline{W}\subset\bigcup_{i=1}^{m}B(x_i,\delta_{x_i})$. Take 
 $$n_0:= \prod_{i=1}^{m}n_{x_i}\quad  \text{and}\quad\gamma_0:=\max\left\{ \gamma_{x_i}:1\le i\le m \right\}.$$
 Then we have 
 $$ \left| w^{[nn_{0}]}(y) \right|^{\frac{1}{nn_{0}}}\le \gamma_{0}<1 \quad \text{for all }n \in \N\text{ and }y\in \Omega_{-}^{nn_0},$$
 and 
 $$ \left| w^{[-nn_0]}(y) \right|^{\frac{1}{nn_{0}}}\le \gamma_{0}<1 \quad \text{for all }n \in \N\text{ and }y\in \Omega_{+}^{nn_0+1}.$$
 Thus, for a suitable constant $C>0$, we have
$$\left\| w^{[nn_{0}]} \right\|_{\infty, \,\Omega^{nn_0}_{-}}^{\frac{1}{nn_{0}}}\le \gamma_{0}\quad \text{and}\quad \left\| w^{[-nn_{0}-1]} \right\|_{\infty, \,\Omega^{nn_0+1}_{+}}^{\frac{1}{nn_{0}+1}}\le  C^{\frac{1}{nn_0+1}}\gamma_{0}^{\frac{nn_0}{nn_0+1}} \quad \text{for all }n \in \N$$
and, therefore, by Lemma \ref{lead2}, condition (C) holds. 
\end{itemize}
$(\Leftarrow):$ This implication follows directly from Theorem \ref{theo1}.
\end{proof}
The following corollary follows from Lemma \ref{lead3} and Theorem \ref{MainTheorem}.
    \begin{corollary}\label{csb1}
        Let $(\Omega,f,w)$ be an invertible system as in Theorem \ref{MainTheorem}, and let $X:= C_0(\Omega)$ or $X:= C_b(\Omega)$. Let $C_{w,f}$ be a weighted composition operator that is an invertible operator on $X$. Suppose that conditions (H1) and (H2) hold. Then $C_{w,f}$ has the shadowing property if and only if one of the following conditions holds:
        \begin{itemize}
         \item[\rm (A)] $\lim_{n \to \infty} \left\| w^{[n]} \right\|_{\infty}^{\frac{1}{n}}<1;$ 
        \item[\rm (B)] $\lim_{n \to \infty} \left\| w^{[-n]} \right\|_{\infty}^{\frac{1}{n}} <1;$
        \item[\rm (C)] $\lim_{n \to \infty} \left\| w^{[n]} \right\|_{\infty,\,\Omega^{n}_{-}}^{\frac{1}{n}}<1$ and $\lim_{n \to \infty} \left\| w^{[-n]} \right\|_{\infty,\,\Omega^{n}_{+}}^{\frac{1}{n}}<1.$
        \end{itemize}
    \end{corollary}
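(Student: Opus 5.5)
The plan is to obtain Corollary \ref{csb1} by chaining Lemma \ref{lead3} with Theorem \ref{MainTheorem}; no new estimates should be needed. The only work is to check that the hypotheses of Theorem \ref{MainTheorem} are all in force.

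First, I take the standing data. By hypothesis, $(\Omega,f,w)$ is an invertible system in which $\Omega$ is a metric space, $w$ is uniformly continuous, and $f$ is distance-preserving. These are exactly the extra requirements placed on the system in Theorem \ref{MainTheorem}.

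Next, I consider $X=C_0(\Omega)$ or $X=C_b(\Omega)$. The operator $C_{w,f}$ is assumed to be an invertible operator on $X$, and (H1) and (H2) hold. These are precisely the hypotheses of Lemma \ref{lead3}, which gives that $X$ is an admissible space for $C_{w,f}$ generated by $W$. Here $W$ is the relatively compact, connected set with nonempty interior and $W\subset\overline{\operatorname{int}(W)}$ supplied by (H1). In particular, (AS1)--(AS5) all hold, and the sets $\Omega^n_\pm$ appearing in (C) are the ones defined through this same $W$.

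Finally, I apply Theorem \ref{MainTheorem} to this admissible space. It states that $C_{w,f}$ has the shadowing property if and only if (A), (B) or (C) holds. The conditions (A)--(C) of the corollary are verbatim those of the theorem, so the equivalence follows.

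The only point needing care is a bookkeeping one. Lemma \ref{lead3} is stated for an arbitrary invertible system, with no metric assumptions, so it applies a fortiori to the present one. Also, the $W$ produced by Lemma \ref{lead3} must be the same set as in (H1), so that $\Omega^n_\pm$ is unambiguous. Both points are immediate from the statements, so I expect no real obstacle.
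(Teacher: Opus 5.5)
Your proposal is correct and matches the paper's argument, which likewise obtains the corollary directly from Lemma \ref{lead3} (giving that $X$ is an admissible space for $C_{w,f}$ generated by the $W$ of (H1)) combined with Theorem \ref{MainTheorem}. Your bookkeeping remarks about applying Lemma \ref{lead3} to the metric setting and using the same $W$ for $\Omega^n_\pm$ are exactly the checks needed.
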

    By specializing Corollary \ref{csb1} to the setting of translations, described in Example \ref{exx1}, we obtain the following corollary.
    \begin{corollary}\label{cc0r}
Consider $X:= C_0(\R)$ or $X:= C_b(\R)$. Let $w:\R\to\R$ be a uniformly continuous function that satisfies 
$$0 < \inf_{x \in \R} |w(x)| \le \sup_{x \in \R}|w(x)|< \infty.$$ 
Then the bilateral weighted translation operator $T_w: X\to X$ has the shadowing property if and only if one of the following conditions holds:  
        \begin{itemize}
            \item[\rm (a)] $\lim_{n\to \infty}\sup_{x\in \R}|w(x)\cdots w(x+n-1)|^{\frac{1}{n}}<1;$
            \item[\rm (b)] $\lim_{n\to \infty}\inf_{x\in \R}|w(x)\cdots w(x+n-1)|^{\frac{1}{n}}>1;$
            \item[\rm (c)] $\lim_{n\to \infty}\sup_{x\in (- \infty,-n+1]}|w(x)\cdots w(x+n-1)|^{\frac{1}{n}}<1$ and \\ 
            $\lim_{n\to \infty}\inf_{x\in [n, \infty)}|w(x-n)\cdots w(x-1)|^{\frac{1}{n}}>1$.
        \end{itemize}
    \end{corollary}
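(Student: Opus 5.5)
The plan is to deduce Corollary \ref{cc0r} from Corollary \ref{csb1}, applied to the invertible system $(\R,f,w)$ with $f(x)=x+1$. After that, the only work is to translate the abstract conditions (A), (B), (C) into the concrete conditions (a), (b), (c).

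First I check the hypotheses of Corollary \ref{csb1}. Here $\R$ is a metric space and $f$ is distance-preserving. The weight $w$ is uniformly continuous, and the bounds on $|w|$ make $(\R,f,w)$ an invertible system. Moreover $T_w=C_{w,f}$ is invertible on $X$, with inverse $\varphi\mapsto \varphi(\cdot-1)/w(\cdot-1)$. For (H1) I take $W=[0,1)$, as in Example \ref{exx1}. This $W$ is relatively compact, connected, has nonempty interior, satisfies $W\subset\overline{(0,1)}$, and $\R=\bigsqcup_k [k,k+1)$ gives (AS2). For (AS3) I use the functions $\rho_\pm$ from the proof of Corollary \ref{cooor1}. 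Their supports are $(-\infty,1]=\overline{\bigsqcup_{k\ge0}f^{-k}(W)}=\Omega^0_-$ and $[0,\infty)=\Omega^0_+$; they satisfy $\|\rho_\pm\|_\infty=1$ and $\rho_-+\rho_+\equiv1$, and multiplication by them preserves $C_0(\R)$ and $C_b(\R)$. For (H2), a compact set lies in some $[-N,N]$, which meets only finitely many of the intervals $[k,k+1)$.

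Next I compute the quantities appearing in Corollary \ref{csb1}. We have $w^{[n]}(x)=w(x)\cdots w(x+n-1)$ and $w^{[-n]}(x)=1/(w(x-n)\cdots w(x-1))$, as noted in the proof of Corollary \ref{cooor1}. Hence (A) is exactly (a). For (B), note that $\|w^{[-n]}\|_\infty=1/\inf_x|w(x-n)\cdots w(x-1)|$, and the substitution $x\mapsto x+n$ shows this equals $1/\inf_x|w(x)\cdots w(x+n-1)|$. So (B) holds iff the limit in (b) exceeds $1$, and that limit exists because the limit in (B) exists by Lemma \ref{lead2}.

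For (C) we have $\Omega^n_-=\overline{\bigsqcup_{k\ge n}[-k,-k+1)}=(-\infty,-n+1]$, so the first half of (C) is literally the first half of (c). Similarly $\Omega^n_+=[n,\infty)$, and $\|w^{[-n]}\|_{\infty,[n,\infty)}=1/\inf_{x\ge n}|w(x-n)\cdots w(x-1)|$, so the second half of (C) matches the second half of (c). Again, existence of the limits comes from Lemma \ref{lead2}.

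The only point needing care is the closures defining $\Omega^n_\pm$ and the index shifts, but these are routine. With the hypotheses verified and the three translations in hand, Corollary \ref{csb1} yields the stated equivalence.
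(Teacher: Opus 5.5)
Your proposal is correct and follows the paper's route. The paper obtains this corollary by specializing Corollary~\ref{csb1} to $f(x)=x+1$ with $W=[0,1)$ (Example~\ref{exx1}), using the same $\rho_\pm$, the formulas $w^{[n]}(x)=w(x)\cdots w(x+n-1)$ and $w^{[-n]}(x)=1/(w(x-n)\cdots w(x-1))$, and the identifications $\Omega^n_-=(-\infty,-n+1]$, $\Omega^n_+=[n,\infty)$ from Corollary~\ref{cooor1}; your checks of (H1), (H2) and of the reciprocal relation between (B) and (b) fill in details the paper leaves implicit.
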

   \begin{example}
       In the context of Theorem \ref{MainTheorem}, taking $w\equiv 1$ and $f$ to be any distance-preserving map, we obtain that the composition operator $C_f$ never has the shadowing property. In particular, taking $X:=C_0(\R)$ or $X:=C_b(\R)$ we obtain that the translation
\[
\varphi\in X\mapsto \varphi(\cdot+1)\in X
\]
does not have the shadowing property.
   \end{example}
   \begin{example}
Let $X:= C_0(\R)$ or $X:= C_b(\R)$. Consider $w:\R\to\R$ defined by $w(x)=\sin(2\pi x)+2$. For every $k\in \Z$ and $n\in \N$, we have  
$$\left( \sin\left( 2\pi \left( \frac{3}{4}+k \right) \right) +2\right)\cdots \left( \sin\left( 2\pi \left( \frac{3}{4}+k +n-1\right) \right)+2 \right)=1.$$
Therefore, by Corollary \ref{cc0r}, $T_w$ does not have the shadowing property.
   \end{example}


	\smallskip
	
	{\footnotesize

\bigskip\noindent
{\sc Jo\~ao V. A. Pinto}

\smallskip\noindent
Departamento de Matem\'atica Aplicada, Instituto de Matem\'atica, Universidade Federal do Rio de Janeiro, 
Caixa Postal 68530, RJ 21941-909, Brazil.\\
\textit{ e-mail address}: joao.pinto@ufu.br

}

\end{document}